\documentclass[12pt]{article}
\NeedsTeXFormat{LaTeX2e}

\usepackage{verbatim}
\setlength{\textwidth}{126mm} 
\setlength{\textheight}{180mm}
\parindent=5mm
\usepackage{latexsym}
\usepackage{amsmath,amsthm,amsfonts}
\usepackage{eucal}
\usepackage{cases}

\usepackage{mathtext}
\usepackage[cp1251]{inputenc}
\usepackage[T2A]{fontenc}
\usepackage{graphicx}
\usepackage{amsmath}
\usepackage{amssymb}
\usepackage{amsxtra}
\usepackage{latexsym}
\usepackage{ifthen}

\newtheorem{theorem}{Theorem}
\newtheorem{definition}{Definition}[section]
\newtheorem{lemma}{Lemma}[section]
\newtheorem{proposition}{Proposition}[section]

\numberwithin{equation}{section}

\renewcommand{\Re}{\mbox{Re}}
\newcommand{\Ga}{\mbox{Ga}}
\newcommand{\We}{\mbox{We}}
\newcommand{\R}{\mathbb{R}}

\newcommand{\eps}{\varepsilon}

\begin{document}

\title {\bf Nonnegative solutions for a long-wave unstable thin
film equation with convection}

\author{{\normalsize\bf Marina Chugunova, M. C. Pugh, R. M. Taranets}
\smallskip}

\date{\today}

\maketitle

\setcounter{section}{0}

\begin{abstract}
We consider a nonlinear 4th-order degenerate parabolic partial
differential equation that arises in modelling the dynamics of an
incompressible thin liquid film on the outer surface of a rotating
horizontal cylinder in the presence of gravity.  The parameters
involved determine a rich variety of qualitatively different flows.
Depending on the initial data and the parameter values, we prove the
existence of nonnegative periodic weak solutions. In addition, we
prove that these solutions and their gradients cannot grow any faster
than linearly in time; there cannot be a finite-time blow-up.
Finally, we present numerical simulations of solutions.
\end{abstract}

\textbf{2000 MSC:} {35K65, 35K35, 35Q35, 35G25, 35B40, 35B99,
35D05, 76A20}

\textbf{keywords:} {fourth-order degenerate parabolic equations,
thin liquid films, convection, rimming flows, coating flows}

\section{Introduction} \label{A}
We consider the dynamics of a viscous incompressible fluid on the
outer surface of a horizontal circular cylinder that is rotating
around its axis in the presence of gravity, see Figure
\ref{cartoon}.
\begin{figure}
\begin{center}
\vspace{-.5in}
\includegraphics[height=5cm] {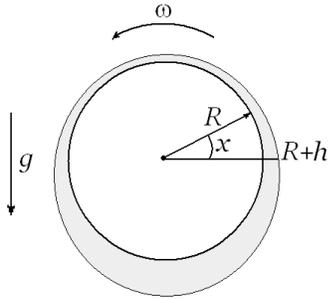}
\end{center}
\vspace{-.2in}
\caption{\label{cartoon}
Liquid film on the outer surface of a rotating
horizontal cylinder in the presence of gravity.
}
\end{figure}
If the cylinder is fully
coated there is only one free boundary: where the liquid meets the
surrounding air.  Otherwise, there
is also a free boundary (or contact line) where the air and liquid meet
the cylinder's surface.

The motion of the liquid film is governed by four physical effects:
viscosity, gravity, surface tension, and centrifugal forces. These
are reflected in the parameters: $R$ --- the radius of the cylinder,
$\omega$ --- its rate of rotation (assumed constant), $g$ --- the
acceleration due to gravity, $\nu$ --- the kinematic viscosity,
$\rho$ --- the fluid's density, and $\sigma$ --- the surface
tension.

These parameters yield three independent dimensionless numbers: the
Reynolds number $\Re = (R^2 \omega)/\nu$, the Galileo number $\Ga =
g/(R \omega^2)$ and the Weber number $\We = (\rho R^3
\omega^2)/\sigma$.

We introduce the parameter $\epsilon = \bar{h}/R$, where $\bar{h}$
is the average thickness of the liquid. The following quantities are
assumed to have finite, nonzero limits as $\epsilon \to 0$.
\cite{Pukh1, Pukh2, Benilov1, Moff}:

\begin{equation} \label{params}
\kappa = \Re\, \epsilon^2, \quad \chi =  \tfrac{\Re}{\We} \,
\epsilon^2, \quad \mbox{and} \quad  \mu = \Ga \, \Re \, \epsilon^2.
\end{equation}
This corresponds to a low rotation rate, for example.

One can model the flow using the full three-dimensional
Navier-Stokes equations with free boundaries: for $\vec{u}(x,y,z,t)$
in the region $x \in [-\pi,\pi)$, $y \in \R^1$, and $z \in
(0,h(x,y,t))$ where $x$ is the angular variable, $y$ is the axial
variable, and $h(x,y,t)$ is the thickness of the fluid above the
point $(x,y)$ on the surface of the cylinder at time $t$. This has
been done by Pukhnachov \cite{Pukh1} in which he considered the
physical regime for which the ratio of the free-fall acceleration
and the centripetal acceleration is small.  There, he proved the
existence and uniqueness of fully-coating steady states (no contact
line is present).  We know of no results for the affiliated initial
value problem.

In this physical regime, if one also makes a longwave approximation
(the thickness of the coating fluid is smaller than the radius of
the cylinder) and if one further assumes that the rotation rate is
low (or the viscosity is large) then the three-dimensional
Navier-Stokes equations with free boundary can be approximated by a
fourth-order degenerate partial differential equation (PDE) for the
film thickness $h(x,y,t)$. This is done by averaging  the fluid flow
in the direction normal to the cylinder \cite{Pukh1,Pukh2}. If one
further assumes that the flow is independent of the axial variable,
$y$, then this results in a PDE in one dimension for $h(x,t)$.

In his pioneering 1977 article about syrup rings on a rotating roller,
Moffatt neglected the effect of surface tension
(i.e. $\We^{-1}=0 = \chi$), assumed the flow was uniform in the
axial variable, and derived \cite{Moff} the following model for the
thin film thickness:
\begin{equation}
\label{A:MoffEq} h_t + \left( h - \tfrac{\mu}{3} h^3 \, \cos(x)
\right)_x = 0,
\end{equation}
where $\mu$ is given in (\ref{params}) and  $$ x \in [-\pi,\pi],
\quad
t > 0, \quad  h \text{ is } 2\pi \text{-periodic in } x. $$
Pukhnachov's 1977 article \cite{Pukh1} gives the first model that
takes into account surface tension:
\begin{equation} \label{A:PukhEq}
h_t + ( h - \tfrac{\mu}{3} h^3 \, \cos(x) )_x + \tfrac{\chi}{3}\,
\left( h^3 \left(h_x + h_{xxx} \right) \right)_x = 0
\end{equation}
where $\mu$ and $\chi$ are given in (\ref{params}) and  $$ x \in
[-\pi,\pi], \quad
t > 0, \quad  h \text{ is } 2\pi \text{-periodic in } x. $$ This
model assumes a no-slip boundary condition at the liquid/solid
interface.  For a solution to (\ref{A:MoffEq}) or (\ref{A:PukhEq})
to be physically relevant, either $h$ is strictly positive (the
cylinder is fully coated) or $h$ is nonnegative (the cylinder is wet
in some region and dry in others).

Surprisingly little is understood about the initial value problem
for (\ref{A:PukhEq}). Bernis and Friedman  \cite{B8} were the first
to prove the existence of nonnegative weak solutions for nonnegative
initial data for the related fourth-order nonlinear degenerate
parabolic PDE
\begin{equation} \label{lube}
h_t + (f(h)\,h_{xxx})_x = 0,
\end{equation}
where $f(h) = |h|^n\,f_0(h), \quad f_0(h) > 0, \quad n \geqslant 1$.

Unlike for second-order parabolic equations, there is no comparison
principle for equation (\ref{lube}).  Nonnegative initial data does
not automatically yield a nonnegative solution; indeed it may not
even be true for general fourth-order PDE (e.g. consider $h_t = -
h_{xxxx}$).  The degeneracy $f(h)$ in equation (\ref{lube}) is key
in ensuring that nonnegative solutions exist.

Lower-order terms can be added to equation (\ref{lube}) to model
additional physical effects.  For example,
\begin{equation} \label{long_wave_stable}
h_t + (f(h)\,h_{xxx})_x  - (g(h) h_x)_x = 0
\end{equation}
where $g(h) > 0$ for $h \neq 0$.  Equation (\ref{long_wave_stable})
can model a thin liquid film on a horizontal surface with gravity
acting towards the surface. If this surface is not horizontal then
the dynamics can be modelled by
\begin{equation} \label{advect}
h_t +(h^n(a - b\,h_x + h_{xxx}))_x = 0, \ \  a > 0, \ \ b \geq 0
\end{equation}
The constant $a$ in the first-order term vanishes as the surface
becomes more and more horizontal. If the thin film of liquid is on a
horizontal surface with gravity acting away from the surface then
the thin film dynamics can be modelled by
\begin{equation} \label{long_wave_unstable}
h_t + (f(h)\,h_{xxx})_x  + (g(h) h_x)_x = 0.
\end{equation}
For a thorough review of the modelling of thin liquid films, see
\cite{Craster,Myers,O1}.

In equations (\ref{long_wave_stable}) and (\ref{advect}) the
second-order term is stabilizing: if one linearizes the equation about
a constant, positive steady state then the presence of the
second-order term increases how quickly perturbations decay in time.
In equation (\ref{long_wave_unstable}), the second-order term is
destabilizing: the linearized equation can have some long-wavelength
perturbations that grow in time.  For this reason, we refer to
equation (\ref{long_wave_unstable}) as ``long--wave unstable''.  The
long--wave stable equations (\ref{long_wave_stable}) and
(\ref{advect}) have similar dynamics as equation (\ref{lube}) however
the long-wave unstable equation (\ref{long_wave_unstable}) can have
nontrivial exact solutions and can have finite--time blow--up
($h(x^*,t) \uparrow \infty$ as $t \uparrow t^* < \infty$).

In all cases, the fourth-order term makes it harder to prove desirable
properties such as: the short--time (or long--time) existence of
nonnegative solutions given nonnegative initial data, compactly
supported initial data yielding compactly supported solutions (finite
speed of propagation), and uniqueness.  Indeed, there are
counterexamples to uniqueness of weak solutions \cite{B2}.  Results
about existence and long--time behavior for solutions of
(\ref{long_wave_stable}) can be found in \cite{B14}; analogous results
for (\ref{advect}) are in \cite{Gi}.  See \cite{B15,BertPughBlow} for
results about existence, finite speed of propagation, and finite--time
blow--up for equation (\ref{long_wave_unstable}).

In this paper we study the existence of  weak solutions of the thin
film equation
\begin{equation} \label{A:mainEq}
h_t  + \left({ |h|^3 (a_0 \, h_{xxx} + a_1 h_x + a_2 w'(x) ) }
\right)_x + a_3 h_x = 0
\end{equation}
where $a_1, \, a_2, \, a_3 $ are arbitrary constants, constant $a_0
> 0$, and $w(x)$ is periodic. Equation (\ref{A:PukhEq}) is a
special case of (\ref{A:mainEq}). The sign of $a_1$ determines
whether equation (\ref{A:mainEq}) is long--wave unstable. Also, the
coefficient of the convection term $a_2 (w'(x) |h|^3)_x$ can depend
on space and will change sign if $a_2 w'(x) \not\equiv 0$. The cubic
nonlinearity $|h|^3$ in equation (\ref{A:mainEq}) arises naturally
in models of thin liquid films with no-slip boundary conditions at
the liquid/solid interface. Our methods generalize naturally to
$f(h)=|h|^n$; we refer the reader to \cite{B2,B8,BertPugh1996} for
the types of results expected.

Given nonnegative initial data that satisfies some reasonable
conditions, we prove long-time existence of nonnegative periodic
generalized weak solutions to the initial value problem for equation
(\ref{A:mainEq}). We start by using energy methods to prove
short-time existence of a weak solution and find an explicit lower
bound on the time of existence. A generalization and sharpening of
the method used in \cite{B15} allows us to prove that the $H^1$ norm
of the constructed solution can grow at most linearly in time,
precluding the possibility of a finite--time blow--up. This $H^1$
control, combined with the explicit lower bound on the (short) time
of existence, allows us to continue the weak solution in time,
extending the short-time result to a long-time result.

If $a_2=0$ or $a_3 = 0$ in equation (\ref{A:mainEq}) then solutions
will be uniformly bounded for all time.  If $a_2 \neq 0$ and $a_3
\neq 0$, it is natural to ask if the nonlinear advection term could
cause finite--time blow--up ($h(x^*,t) \uparrow \infty$ as $t
\uparrow t^*$. Such finite-time blow-up is impossible by the
linear-in-time bound on $H^1$ but we have not ruled out that a
solution might grow in an unbounded manner as time goes to infinity.

In \cite{BDGG, DGG}, the authors consider
the multidimensional analogue of (\ref{lube})
\begin{equation} \label{M1}
h_t  + \nabla \cdot \left( {|h|^n \nabla \Delta h} \right) = 0,
\end{equation}
for $h(x,t)$ where $x \in \Omega \subset \R^N$ with $N = 2,3$.
Depending on the sign of $A'$, if $g=0$ then equation
\begin{equation} \label{M2}
h_t  +
\nabla \cdot \left( {f(h)\nabla \Delta h + \nabla A(h)}
\right) = g(t,x,h,\nabla h)
\end{equation}
on $\Omega$
is the multidimensional analogue of equation (\ref{long_wave_stable})
or (\ref{long_wave_unstable}).
In \cite{D3}, the authors consider the long-wave
stable case with $g = 0$ and power-law coefficients, $f(h) = |h|^n$ and
$A'(h) = - |h|^m$.
In \cite{Gr}, the author considers the Neumann problem for
both the long-wave stable and unstable cases
with the assumption that $f(h) \geq 0$ has power-law-like behavior
near $h=0$, that
$|A'(h)|$ is dominated by $f(h)$ (specifically
$|A'(h)| \leqslant d_0 f(h) $ for some $d_0$), and that the source/sink term $g(t,x,h)$ grows no
faster than linearly in $h$.
In  \cite{T1,T2,T3}, the authors consider the Neumann problem for
the long-wave stable case of (\ref{M2}) with power-law coefficients
and a larger class of source
terms: $g(t,x,h) \sim |h|^{\lambda -1}h$ with $\lambda > 0$.
In \cite{T4,T5}, the same authors consider the long-wave stable equation with power-law
coefficients but with
$g(h) =  \vec{a} \cdot \nabla b(h)$ where $b(z) \sim z^{\lambda}$
and $\vec{a} \in \mathbb{R}^N$: $g$ models advective effects.  They
consider the problem both  on $\R^N$ and on a bounded domain $\Omega$.

All of these works on (\ref{M1}) and (\ref{M2}) construct nonnegative weak solutions from nonnegative
initial data and address qualitative questions such as dependence on exponents $n$ and
$m$ and $\lambda$, on dimension $N$, speed of propagation of the support and of
perturbations, exact asymptotics of the motion of the support, and positivity properties.
We note that the works \cite{T1,T2,T3,T4,T5} also construct ``strong'' solutions.

Finally, we refer readers to the technical report \cite{Report}
which presents the results of this article, and some additional
results, along with more extensive discussion, calculations, and
simulations.

\section{Steady state solutions} \label{AA}

Smooth steady state solutions, $h(x,t) = h(x)$, of (\ref{A:PukhEq}) satisfy
\begin{equation} \label{chi_nonzero_ss}
h - \tfrac{\mu}{3} h^3 \, \cos(x)  + \tfrac{\chi}{3}\,
\left( h^3 \left(h_x + h_{xxx} \right) \right) = q
\end{equation}
where $q$ is a constant of integration that corresponds to the dimensionless mass flux.
In the zero surface tension case ($\chi = 0$), steady states satisfy
\begin{equation} \label{chi_zero_ss}
h - \tfrac{\mu}{3} h^3 \, \cos(x)  = q.
\end{equation}
Such steady states were first studied by Johnson \cite{John} and
Moffatt \cite{Moff}. Johnson proved that there are positive, unique,
smooth steady states if and only if the flux is not too large: $0 <
q < 2/(3 \sqrt{\mu})$. These steady-states are neutrally stable
\cite{OBrien}.
Smooth, positive steady states in the presence of surface tension
have been studied by a number of authors.  One striking
computational result \cite{Benilov3} is that for certain values of
$\chi$ and $\mu$ there can be non-uniqueness.

These non-unique steady states were numerically discovered via an
elegant combination of asymptotics and a two-parameter (mass and
flux) continuation method \cite[Figure 14]{Benilov3}. To start the
continuation method, earlier work \cite{Benilov1} on the regime in
which viscous forces dominate gravity was used.  There, asymptotics
show that for small fluxes the steady state is close to $q + 1/3 q^3
\cos(x) + \mathcal{O}(q^5)$, providing a good first guess for the
iteration used to find the steady state. The bifurcation diagram
shown in Figure 14 of \cite{Benilov3} also suggests that the Moffatt
model (\ref{A:MoffEq}) can be considered as the limit of the
Pukhnachov model (\ref{A:PukhEq}) as surface tension goes to zero
($\chi \to 0$).

Pukhnachov proved \cite{Pukh3} a nonexistence result: no positive
steady states exist if $q > 2 \sqrt{3/\mu} \simeq 3.464/\sqrt{\mu}$.
We improve this, proving that no such solution exists if $q > 2/3 \,
\sqrt{2/\mu} \simeq 0.943/\sqrt{\mu}$.
\begin{proposition}\label{Prop2.1}
There does not exist a strictly positive $2 \pi$ periodic solution
$h(x)$ of equation (\ref{chi_nonzero_ss}) if $q > 2/3 \,
\sqrt{2/\mu}$.
\end{proposition}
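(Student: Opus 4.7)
The plan is to exploit the multiplicative structure of (\ref{chi_nonzero_ss}) by dividing through by $h^3$, which is legitimate since $h>0$ is assumed. This produces the pointwise identity
$$F(h(x)) \;=\; \frac{\mu}{3}\cos x \;-\; \frac{\chi}{3}\bigl(h_x + h_{xxx}\bigr),\qquad F(y) := \frac{1}{y^2} - \frac{q}{y^3}.$$
The key observation is that each of $\{1,\cos x,\sin x\}$ lies in the kernel of the operator $\partial_x + \partial_x^3$, so two applications of periodic integration by parts give $\int_{-\pi}^{\pi}(h_x+h_{xxx})\phi\,dx = 0$ whenever $\phi$ is one of these three functions. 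Testing the displayed identity against any such $\phi$ therefore eliminates the surface-tension contribution entirely and leaves a relation involving only the explicit rational function $F$.

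I would extract two scalar identities this way. Testing against $\phi\equiv 1$ yields $\int F(h)\,dx = 0$ (equivalently $\int h^{-2}\,dx = q\int h^{-3}\,dx$), while testing against $\phi=\cos x$, using $\int_{-\pi}^{\pi}\cos^2 x\,dx = \pi$, gives
$$\int_{-\pi}^{\pi} F(h(x))\cos x\,dx \;=\; \frac{\mu\pi}{3}.$$

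The next step is an elementary one-variable analysis of $F$ on $(0,\infty)$. From $F'(y) = y^{-4}(3q - 2y)$ one finds that $F$ attains its global supremum at $y = 3q/2$ with value $F_{\max} = 4/(27q^2)$; consequently $F_{\max} - F(h(x))\ge 0$ pointwise. Combined with $\int F(h)\,dx=0$ this yields $\int\bigl(F_{\max}-F(h)\bigr)dx = 2\pi F_{\max}$. Using $|\cos x|\le 1$ and the nonnegativity of $F_{\max}-F(h)$,
$$\frac{\mu\pi}{3} \;=\; \biggl|\int_{-\pi}^{\pi}\bigl(F_{\max}-F(h)\bigr)\cos x\,dx\biggr| \;\le\; \int_{-\pi}^{\pi}\bigl(F_{\max}-F(h)\bigr)dx \;=\; \frac{8\pi}{27q^2},$$
which rearranges to $q^2 \le 8/(9\mu)$; this contradicts the standing hypothesis $q > \tfrac{2}{3}\sqrt{2/\mu}$.

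I do not anticipate a substantial obstacle here: each of the three ingredients (the integration-by-parts annihilation of $\partial_x + \partial_x^3$ against $\cos x$, the calculus computation of $F_{\max}$, and the final $L^{\infty}$--$L^{1}$ bound) is routine. The only point worth underlining is that dividing by $h^{3}$ is the correct first move: it places the surface-tension term in precisely the form whose $\cos x$-moment vanishes, whereas any other normalization would leave a nonzero $\chi$-contribution and block the estimate.
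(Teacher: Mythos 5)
Your proof is correct and follows essentially the same route as the paper: dividing by $h^3$ and pairing against $1$ and $\cos x$ is exactly the orthogonality/solvability argument the paper states after rescaling $y=h/q$ (your $F$ is $q^{-2}$ times the paper's rational function, and your $F_{\max}=4/(27q^2)$ matches its $4/27$ after rescaling). The only cosmetic difference is in the final estimate: the paper sums the two moment identities and uses $1+\cos x\ge 0$ together with the sign of $1/y^2-1/y^3$ on $\{y\gtrless1\}$, whereas you subtract the constant $F_{\max}$ to get a one-signed integrand and then apply $|\cos x|\le 1$; these are interchangeable $L^\infty$--$L^1$ bounds yielding the identical threshold $q\le\tfrac23\sqrt{2/\mu}$.
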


\begin{proof}[Proof of Proposition~\ref{Prop2.1}]
Following Pukhnachov, we start by rescaling the flux to $1$ by
introducing $y(x) = h(x)/q$ and introducing the parameters $\gamma =
\tfrac{\chi \, q^3}{3}$ and $\beta = \tfrac{q^2 \mu}{3}$. Equation
(\ref{chi_nonzero_ss}) transforms to
\begin{equation}\label{AA:1}
\gamma (y''' + y') = \beta \cos{(x)} - \tfrac{1}{y^2} +
\tfrac{1}{y^3}.
\end{equation}
The solution $y$ is written as $y(x) = a_0 + a_1 \cos(x) + a_2
\sin(x) + v(x)$ where $v(x) \perp \mbox{span}\{ 1, \cos(x), \sin(x)
\}$ and satisfies
\begin{equation} \label{AA:2}
\gamma (v''' + v') = \beta \cos{(x)} - \tfrac{1}{y(x)^2} +
\tfrac{1}{y(x)^3}.
\end{equation}
A solution $v$ exists only if the right-hand side of (\ref{AA:2}) is
orthogonal to $\mbox{span}\{ 1, \cos(x), \sin(x) \}$.  As a result,
\begin{equation} \label{AA:2=3}
\int\limits_{-\pi}^{\pi} \left( \tfrac{1}{y(x)^2} -
\tfrac{1}{y(x)^3} \right) \; dx = 0,  \  \int\limits_{-\pi}^{\pi}
\left( \tfrac{1}{y(x)^2} - \tfrac{1}{y(x)^3} \right) \cos(x) \; dx =
\pi \, \beta.
\end{equation}
$$\mbox{It follows from (\ref{AA:2=3}) that }\quad \pi \beta \leq
\int\limits_{y \geq 1} \tfrac{4}{27} \left( 1 + \cos(x) \right) \;
dx \leq \tfrac{4}{27} \, 2 \pi.  \quad $$ This shows that if there
is a positive steady state then $\beta \leq 8/27$. Recalling the
definition of $\beta$, there is no steady state if $q
> 2/3 \, \sqrt{2/\mu}$.
\end{proof}

The proof also holds in the case of zero surface tension $\chi =
\gamma = 0$ and so it is natural that the bound $2/3 \, \sqrt{2/\mu}$
is larger than $2/(3 \sqrt{\mu})$ (the bound found by Johnson and
Moffatt.)  Also, we note that numerical simulations that suggest
nonexistence of a positive steady state if $q > 0.854$ when $\mu = 1$
for a large range of surface tension values \cite[p. 61]{Kar}; our bound of
$0.943$ is not too far off from this.

\section{Short--time Existence and Regularity of Solutions} \label{B}

We are interested in the existence of nonnegative generalized weak
solutions to the following initial--boundary value problem:
\begin{numcases}
{(\textup{P})}
h_t  + \left({ f(h) (a_0h_{xxx} + a_1 h_x + a_2 w'(x) ) } \right)_x + a_3 h_x = 0 \text{ in }Q_T, \qquad \quad \label{B:1} \\
\tfrac{\partial^{i} h}{\partial x^i}(-a,t) = \tfrac{\partial^{i}h}{\partial x^i}(a,t) \text{ for }t > 0,\,i=\overline{0,3}, \label{B:2}  \\
h(x,0) = h_0 (x) \geqslant 0, \label{B:3}
\end{numcases}
where $f(h) = |h|^3$, $h= h(x,t)$, $\Omega = (-a, a)$, and $Q_T =
\Omega \times (0,T)$. Note that rather than considering the interval
$(-a,a)$ with boundary conditions (\ref{B:2}) one can equally well
consider the problem on the circle $S^1$; our methods and results
would apply here too. Recall that $a_1$, $a_2$, and $a_3$ in
equation (\ref{B:1}) are arbitrary constants; $a_0$ is required to
be positive.  The function $w$ in (\ref{B:1}) is assumed to satisfy:
\begin{equation}\label{B:w}
w \in C^{2+\gamma}(\Omega) \; \mbox{for some} \; 0 < \gamma < 1,
\tfrac{\partial^{i} w}{\partial x^i}(-a) =
\tfrac{\partial^{i}w}{\partial x^i}(a) \text{ for }i =
\overline{0,2}.
\end{equation}
We consider a generalized weak solution in the following sense \cite{B2,B6}:

\begin{definition}\label{B:defweak}
A generalized weak solution of problem $(\textup{P})$ is a function
$h$ satisfying
\begin{align}
& \label{weak1}
h \in C^{1/2,1/8}_{x,t}(\overline{Q}_T) \cap L^\infty (0,T; H^1(\Omega )),\\
& \label{weak-d} h_t \in L^2(0,T; (H^1(\Omega))'),\\
& \label{weak2} h \in C^{4,1}_{x,t}(\mathcal{P}), \,\,\, \sqrt{f(h)}
\, \left( a_0 h_{xxx} + a_1 h_x + a_2 w' \right) \in
L^2(\mathcal{P}), \,\,
\end{align}
where $\mathcal{P} = \overline{Q}_T \setminus ( {h=0} \cup {t=0})$
and $h$ satisfies (\ref{B:1}) in the following sense:
\begin{align}\notag
& \int\limits_0^T \langle h_t(\cdot,t), \phi \rangle \; dt -
\iint\limits_{\mathcal{P}}
{f(h) ( a_0h_{xxx} + a_1 h_x + a_2 w'(x))\phi_x\,dx dt } \\
& \hspace{2.5in} - a_3\iint\limits_{Q_{T}} {h \phi_x\,dx dt} = 0
\label{integral_form}
\end{align}
for all $\phi \in C^1(Q_T)$ with $\phi(-a,\cdot) = \phi(a,\cdot)$;
\begin{align}
&  \label{ID1} h(\cdot,t) \to h(\cdot,0) = h_0
\mbox{  pointwise \& strongly in $L^2(\Omega)$ as $t \to 0$}, \\
& \label{BC1} h(-a,t)=h(a,t) \; \forall t \in [0,T] \; \mbox{and} \;
\tfrac{\partial^{i}
h}{\partial x^i}(-a,t) = \tfrac{\partial^{i}h}{\partial x^i}(a,t)  \\
& \notag \mbox{for} \; i = \overline{1,3} \; \mbox{at all points of
the lateral boundary where $\{h \neq 0\}$.}
\end{align}
\end{definition}

Because the second term of (\ref{integral_form}) has an integral
over $\mathcal{P}$ rather than over $Q_T$, the generalized weak
solution is ``weaker'' than a standard weak solution. Also note that
the first term of (\ref{integral_form}) uses $h_t \in L^2(0,T;
(H^1(\Omega))' )$; this is different from the definition of weak
solution first introduced by Bernis and Friedman \cite{B8}; there,
the first term was the integral of $h \phi_t$ integrated over $Q_T$.

We first prove the short-time existence of a generalized weak
solution and then prove that it can have additional regularity.  In
Section \ref{G} we prove additional control for the $H^1$ norm which
then allows us to prove long-time existence.

\begin{theorem}[Existence]\label{C:Th1}
Let the nonnegative initial data $h_0 \in H^1(\Omega)$ satisfy
\begin{equation}\label{C:inval}
\int\limits_{\Omega} {\tfrac{1}{h_0(x)}} \; dx < \infty,
\end{equation}
and either 1) $h_0(-a) = h_0(a) = 0$ or  2) $h_0(-a) = h_0(a) \neq
0$ and $ \tfrac{\partial^{i} h_0}{\partial x^i}(-a) =
\tfrac{\partial^{i}h_0}{\partial x^i}(a) \text{ holds for } i =
\overline{1,3}$. Then for some time $T_{loc}>0$ there exists a
nonnegative generalized weak solution, $h$, on $Q_{T_{loc}}$ in the
sense of the definition \ref{B:defweak}.  Furthermore,
\begin{equation} \label{Linf_H2}
h \in L^2(0,T_{loc};H^2(\Omega)).
\end{equation}
Let
\begin{equation} \label{Energy}
\mathcal{E}_0(T) := \tfrac{1}{2}\int\limits_{\Omega} ({  a_0
h_x^2(x,T) - a_1 h^2(x,T) - 2a_2 w(x) h(x,T)) \,dx}
\end{equation}
then the weak solution satisfies
\begin{equation}\label{C:d2'}
\mathcal{E}_0(T_{loc}) +
\iint\limits_{\{h >0 \}}
{h^3 (a_0 h_{xxx} +
a_1 h_x  + a_2 w')^2 \,dx \, dt} \leqslant \mathcal{E}_0(0) + K\, T_{loc},
\end{equation}
where $K = |a_2a_3| \, \| w' \|_\infty C < \infty$. The time of
existence, $T_{loc}$, is determined by $a_0$, $a_1$, $a_2$, $w'$,
$|\Omega |$, and $h_0$.
\end{theorem}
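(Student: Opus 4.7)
The plan is to follow the Bernis--Friedman regularization strategy \cite{B8}, adapted for the destabilizing term $a_1 h_x$ and the space-dependent convection $a_2(w'(x)f(h))_x$. I would first replace $f(h)=|h|^3$ by a uniformly positive smooth approximation $f_\epsilon(h)\geq\epsilon>0$ and smooth the initial datum to a strictly positive $h_{0,\epsilon}\in C^{4+\gamma}$ matching the boundary conditions, with $h_{0,\epsilon}\to h_0$ in $H^1(\Omega)$ and $\int 1/h_{0,\epsilon}\,dx\to\int 1/h_0\,dx$. For each fixed $\epsilon$ the regularized equation is uniformly parabolic, so classical quasilinear theory (Eidelman/Solonnikov) produces a smooth periodic classical solution $h_\epsilon$, extendable as long as $h_\epsilon$ stays in a $C^{2+\gamma}$ compact subset of $(0,\infty)$.

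The two $\epsilon$-uniform estimates come from pairing the regularized equation against $-(a_0 h_{\epsilon xx}+a_1 h_\epsilon+a_2 w)$ and against $G_\epsilon'(h_\epsilon)$ with $G_\epsilon''(s)=1/f_\epsilon(s)$. The first (energy) identity collapses, after integration by parts and using mass conservation $M_0=\int h_\epsilon\,dx$ to handle the convection cross-term, to
\begin{equation*}
\tfrac{d}{dt}\mathcal{E}_{0,\epsilon}(t)+\int_\Omega f_\epsilon(h_\epsilon)\bigl(a_0 h_{\epsilon xxx}+a_1 h_{\epsilon x}+a_2 w'\bigr)^2 dx=-a_2 a_3 \int_\Omega w' h_\epsilon\,dx,
\end{equation*}
whose right-hand side is dominated by $|a_2 a_3|\|w'\|_\infty M_0$. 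Applying periodic Poincar\'e--Wirtinger to $h_\epsilon - M_0/|\Omega|$ bounds $\|h_\epsilon(\cdot,t)\|_{H^1}$ on an interval $[0,T_{loc}]$ whose length is determined by $a_0,a_1,a_2,w',|\Omega|,h_0$. The second (entropy) identity uses $G_\epsilon''(h_\epsilon)f_\epsilon(h_\epsilon)\equiv 1$ and periodicity to eliminate the $a_3$ contribution, giving
\begin{equation*}
\tfrac{d}{dt}\int_\Omega G_\epsilon(h_\epsilon)\,dx+a_0\int_\Omega h_{\epsilon xx}^2\,dx=a_1\int_\Omega h_{\epsilon x}^2\,dx-a_2\int_\Omega w''\,h_\epsilon\,dx.
\end{equation*}
Combined with the preceding $H^1$ bound, this supplies uniform $L^2(0,T_{loc};H^2)$ control (which is exactly (\ref{Linf_H2})) and a uniform bound on $\int G_\epsilon(h_\epsilon)\,dx$, preventing the limit from degenerating to zero on a set of large measure and matching hypothesis (\ref{C:inval}) since $G_\epsilon(s)\sim 1/(2s)$.

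Using these bounds and the PDE to control $h_{\epsilon,t}$ in $L^2(0,T_{loc};(H^1(\Omega))')$, Aubin--Lions and Arzel\`a--Ascoli yield a subsequence converging in $C^{1/2,1/8}_{x,t}(\overline{Q}_{T_{loc}})$ to a limit $h$; the boundary and initial conditions are inherited by uniform convergence. Nonnegativity follows from the entropy bound, and on the positivity set $\mathcal{P}$ the regularized solutions converge classically, so the flux $\sqrt{f_\epsilon(h_\epsilon)}(a_0 h_{\epsilon xxx}+a_1 h_{\epsilon x}+a_2 w')$ passes to the limit and the weak formulation (\ref{integral_form}) follows, with (\ref{C:d2'}) recovered via weak lower semicontinuity of the dissipation. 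The hard part is closing the energy estimate uniformly in $\epsilon$ in the long-wave unstable regime $a_1>0$: the quadratic form $\mathcal{E}_0$ is not coercive on $H^1$, so the indefinite piece $-\tfrac{a_1}{2}\int h^2$ must be absorbed into $\tfrac{a_0}{2}\int h_x^2$ plus an $O(M_0^2)$ mass correction via Poincar\'e--Wirtinger, and the resulting nonlinear-Gr\"onwall constants are what force $T_{loc}$ to be finite and explicitly computable when $a_1$ is large. The inhomogeneous weight $w'(x)$ is routine given $w\in C^{2+\gamma}$, but it has to be carefully propagated through every integration by parts, particularly in the entropy step where $w''$ surfaces.
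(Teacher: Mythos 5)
Your proposal captures the two test functions used for the energy identity (multiplying by $-a_0 h_{xx}-a_1 h -a_2 w$) and the entropy identity (multiplying by $G'_\epsilon(h)$), and correctly identifies non-coercivity of $\mathcal{E}_0$ as the central difficulty. However, there are two genuine gaps that prevent the argument from closing.

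\textbf{First, the regularization is underpowered.} You propose a single-parameter approximation $f_\epsilon\geq\epsilon>0$, which makes the fourth-order operator uniformly parabolic but simultaneously destroys the degeneracy at $h=0$. Since fourth-order equations have no comparison principle, such $h_\epsilon$ need not remain nonnegative, and your claim that $-a_2 a_3\int w' h_\epsilon\,dx$ is ``dominated by $|a_2 a_3|\|w'\|_\infty M_0$'' tacitly uses $\|h_\epsilon\|_{L^1}=M_0$, which requires $h_\epsilon\geq 0$. This is precisely why the paper introduces the two-parameter family $f_{\delta\epsilon}(z)=\tfrac{|z|^4}{|z|+\epsilon}+\delta$: the $\delta>0$ piece secures uniform parabolicity (hence classical solvability), while after sending $\delta\to 0$ first, $f_\epsilon\sim z^4$ gives an entropy $G_\epsilon(z)\sim z^{-2}$ strong enough to force positivity of $h_\epsilon$. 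Only at that intermediate stage, with $h_\epsilon>0$ in hand, can one exploit $\|h_\epsilon\|_{L^1}=M_\epsilon$.

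\textbf{Second, the estimates are in the wrong order and a key test function is missing.} Because $h_{\delta\epsilon}$ (with $\delta>0$) may change sign, the energy-dissipation inequality cannot be closed by mass alone: the cross-term $\int w' h_{\delta\epsilon}$ must instead be controlled by an a priori $H^1$ bound. The paper obtains that $H^1$ bound \emph{before} the energy identity, by pairing the equation against $-h_{xx}$ and \emph{blending} the resulting inequality with the entropy inequality: the quantity $\int h_x^2 + \tfrac{2c_3}{a_0}\int G_{\delta\epsilon}(h)$ satisfies a nonlinear Gr\"onwall inequality with cubic growth, from which $T_{loc}$ is read off explicitly (see (\ref{D:a13''}), (\ref{D:aa3}), (\ref{Tloc_eps_is})). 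Your proposal omits the $-h_{xx}$ multiplier entirely, so there is no way to produce the uniform-in-$\delta$ $H^1$ bound needed to feed into the energy identity, and hence no way to define $T_{loc}$. Relatedly, absorbing $-\tfrac{a_1}{2}\int h^2$ into $\tfrac{a_0}{2}\int h_x^2$ by Poincar\'e--Wirtinger only works when $|a_1||\Omega|^2/8 < a_0/2$ (compare the hypothesis of Theorem~\ref{constancy}); for general $a_1>0$ one needs the interpolation inequality of Lemma~\ref{Marinas_bound}, which replaces the exponent $1$ on $\int h_x^2$ by $1/3$ so that Young's inequality can absorb it for any $a_1$.

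Finally, a minor point: if your sketch did close, the linear-in-time energy bound would yield $H^1$ control on arbitrarily long intervals (this is the content of Lemma~\ref{G:Th1}), so you would be proving global existence directly rather than ``being forced'' to a finite $T_{loc}$ by ``nonlinear-Gr\"onwall constants.'' The finite $T_{loc}$ in the paper comes specifically from the cubic Gr\"onwall inequality for the blended $H^1$--entropy quantity, which is needed at the $\delta>0$ level where positivity is unavailable.
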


We note that the analogue of Theorem 4.2 in \cite{B8} also holds:
there exists a nonnegative weak solution with the integral
formulation
\begin{align} \label{alt_int}
& \int\limits_0^T \langle h_t(\cdot,t), \phi \rangle \; dt
+ a_0 \iint\limits_{Q_T} (3 h^2 h_x h_{xx} \phi_x + h^3 h_{xx} \phi_{xx}) \; dx dt \\
& \hspace{1.5in} - \iint\limits_{Q_T} \left( a_1 h_x + a_2 w' + a_3
h \right) \phi_x \; dx dt = 0. \notag
\end{align}

\begin{theorem}[Regularity]\label{C:Th1.3}
If the initial data from Theorem \ref{C:Th1} also satisfies
$$
\int\limits_{\Omega} {h_0^{\alpha - 1}(x) \,dx} <\infty
$$
for some $-1/2<\alpha<1, \,\,\alpha\neq0$ then there exists $0<
T_{loc}^{(\alpha)}\leq T_{loc}$ such that the nonnegative
generalized weak solution from Theorem \ref{C:Th1} has the extra
regularity $$h^{\tfrac{\alpha + 2}{2}} \in L^{2}(0,
T_{loc}^{(\alpha)}; H^2(\Omega))\quad \mbox{and} \quad
h^{\tfrac{\alpha + 2}{4}} \in L^{2}(0, T_{loc}^{(\alpha)};
W^1_4(\Omega)).$$
\end{theorem}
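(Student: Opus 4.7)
The plan is to adapt the $\alpha$-entropy method of Bernis--Friedman \cite{B8} (refined by Beretta--Bertsch--Dal Passo \cite{B2} and used in \cite{B15,BertPughBlow}) to equation (\ref{B:1}) with convection. I would work on the strictly positive, smooth approximating solutions $h_\varepsilon$ constructed in the proof of Theorem~\ref{C:Th1} and then pass to the limit. For $-\tfrac12 < \alpha < 1$, $\alpha \neq 0,1$, define $G_\alpha$ by $G_\alpha''(h) = h^{\alpha-3}$, so that $G_\alpha(h) = h^{\alpha-1}/[(\alpha-1)(\alpha-2)]$ (up to an affine function) is convex and nonnegative; the hypothesis on $h_0$ gives $\int G_\alpha(h_{0,\varepsilon})\,dx$ bounded uniformly in $\varepsilon$. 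Multiplying (\ref{B:1}) by $G_\alpha'(h)$ and integrating in $x$, periodicity kills the $a_3$ term (since $G_\alpha'(h)h_x = \partial_x G_\alpha(h)$), and one integration by parts in the fourth-order term gives, with $\phi(h) := G_\alpha''(h)f(h) = h^\alpha$ and the key identity $\int h^{\alpha-1}h_x^2 h_{xx}\,dx = -\tfrac{\alpha-1}{3}\int h^{\alpha-2}h_x^4\,dx$,
\begin{equation*}
\tfrac{d}{dt}\!\int G_\alpha(h)\,dx + a_0\!\int h^\alpha h_{xx}^2\,dx - \tfrac{a_0\alpha(\alpha-1)}{3}\!\int h^{\alpha-2}h_x^4\,dx = a_1\!\int h^\alpha h_x^2\,dx + a_2\!\int h^\alpha h_x w'(x)\,dx.
\end{equation*}

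Write $I_1 = \int h^\alpha h_{xx}^2\,dx$, $I_2 = \int h^{\alpha-2} h_x^4\,dx$, and $D = I_1 - \tfrac{\alpha(\alpha-1)}{3} I_2$ for the dissipation. When $0<\alpha<1$, the coefficient of $I_2$ is positive and $D\geq I_1$, $D\geq cI_2$ trivially. When $-\tfrac12 < \alpha < 0$, applying Cauchy--Schwarz to the same identity yields $\tfrac{(1-\alpha)^2}{9} I_2 \leq I_1$, whence $D \geq \tfrac{1+2\alpha}{1-\alpha} I_1$ (and $D$ also controls $I_2$); this is strictly positive precisely for $\alpha > -\tfrac12$, which pins down the lower endpoint in the hypothesis. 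Direct chain-rule calculations give
\begin{gather*}
\int \bigl[(h^{(\alpha+2)/2})_{xx}\bigr]^2\,dx = \bigl(\tfrac{\alpha+2}{2}\bigr)^2 \bigl[I_1 + \tfrac{\alpha(4-\alpha)}{12}\, I_2\bigr], \\
\int \bigl[(h^{(\alpha+2)/4})_x\bigr]^4\,dx = \bigl(\tfrac{\alpha+2}{4}\bigr)^4 I_2,
\end{gather*}
so that $\|h^{(\alpha+2)/2}\|_{H^2}^2 + \|h^{(\alpha+2)/4}\|_{W^1_4}^4 \leq K_\alpha (I_1+I_2) + \text{(lower order)}$, and the previous bound gives the right-hand side $\leq K_\alpha' D + \text{(lower order)}$, where the lower-order part is controlled by $\|h\|_\infty$ (already bounded via Theorem~\ref{C:Th1}).

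To close the estimate, write $\int h^\alpha h_x^2\,dx = \tfrac{4}{(\alpha+2)^2}\|v_x\|_2^2$ with $v=h^{(\alpha+2)/2}$ and use the 1D periodic Gagliardo--Nirenberg interpolation $\|v_x\|_2^2 \leq C(\|v\|_2\|v_{xx}\|_2 + \|v\|_2^2)$ together with Young's inequality to absorb an arbitrarily small multiple of $\|v_{xx}\|_2^2 \lesssim I_1$ into the left-hand side; the remainder is controlled by $\|h\|_\infty^{\alpha+2}|\Omega|$ and by $\int G_\alpha(h)\,dx$. The $a_2$ integral is treated the same way after pulling out $\|w'\|_\infty$. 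This yields a closed differential inequality
\begin{equation*}
\tfrac{d}{dt}\!\int G_\alpha(h)\,dx + c_\alpha\bigl(\|h^{(\alpha+2)/2}\|_{H^2}^2 + \|h^{(\alpha+2)/4}\|_{W^1_4}^4\bigr) \leq C_\alpha\!\int G_\alpha(h)\,dx + C_\alpha',
\end{equation*}
valid on some interval $[0,T_{loc}^{(\alpha)}] \subseteq [0,T_{loc}]$; Gronwall then produces a uniform-in-$\varepsilon$ bound on $\int G_\alpha(h_\varepsilon)\,dx$ and, after time integration, on the desired $L^2_t$ norms.

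The final step is to pass to the limit $\varepsilon \to 0$, using the strong convergence and the lower semicontinuity established during the proof of Theorem~\ref{C:Th1}. The hard part is the Cauchy--Schwarz / algebraic step underlying the positivity of $D$ and the simultaneous control of both norms in the conclusion when $-\tfrac12 < \alpha < 0$: this is exactly where the explicit lower endpoint $-\tfrac12$ enters, and it is the step most sensitive to the cubic degeneracy $f(h)=|h|^3$. A secondary technical point is the uniform-in-$\varepsilon$ finiteness of $\int G_\alpha(h_{0,\varepsilon})\,dx$ once the regularized initial data of Theorem~\ref{C:Th1} is lifted to be strictly positive and smooth; this follows from the integrability hypothesis $\int h_0^{\alpha-1}\,dx < \infty$ and the standard lift $h_{0,\varepsilon} = h_0 + \varepsilon^\theta$.
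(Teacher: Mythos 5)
Your proposal captures the main mechanism of the paper's proof — the $\alpha$-entropy estimate with the identity $\int h^{\alpha-1}h_x^2h_{xx} = \tfrac{1-\alpha}{3}\int h^{\alpha-2}h_x^4$, the Cauchy--Schwarz bound $I_2 \leq \tfrac{9}{(1-\alpha)^2}I_1$ that yields the critical factor $\tfrac{1+2\alpha}{1-\alpha}$ and hence the endpoint $\alpha=-\tfrac12$, and the chain-rule translations into $\|(h^{(\alpha+2)/2})_{xx}\|_2^2$ and $\|(h^{(\alpha+2)/4})_x\|_4^4$ — and those parts are correct. Two points deserve comment.

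\textbf{A genuine gap: you use the wrong entropy for the regularized problem.} You correctly say you must work with the positive, classical approximants $h_\varepsilon$, but then you define $G_\alpha$ by $G_\alpha''(z) = z^{\alpha-3}$, which is tailored to the \emph{limit} nonlinearity $f(h)=|h|^3$. The approximants solve $(\mathrm{P}_{0,\varepsilon})$ with $f_\varepsilon(h) = h^4/(h+\varepsilon)$, so after one integration by parts the coefficient that appears is $\phi_\varepsilon(h) := f_\varepsilon(h)\,G_\alpha''(h) = h^{\alpha+1}/(h+\varepsilon)$, not $h^\alpha$. Consequently $\phi_\varepsilon'(h)\neq\alpha h^{\alpha-1}$, the identity you invoke does not produce the clean $\alpha(1-\alpha)/3$ coefficient, and you are left with $\varepsilon$-dependent error terms that must be shown to vanish. (You also cannot instead apply the entropy argument directly to $h$, because $h$ is only a generalized weak solution and $G_\alpha'(h)$ is singular on $\{h=0\}$, so it is not an admissible test function in (\ref{integral_form}).) The paper's fix is exactly the definition (\ref{E:reg4}): take $(G^{(\alpha)}_\varepsilon)''(z) = z^\alpha/f_\varepsilon(z)$ so that $f_\varepsilon\,(G^{(\alpha)}_\varepsilon)'' \equiv z^\alpha$ exactly. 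This makes $G^{(\alpha)}_\varepsilon$ contain an extra $\varepsilon\,z^{\alpha-2}$ piece, and one must check $\int G^{(\alpha)}_\varepsilon(h_{0,\varepsilon})\,dx$ stays bounded and converges to the $\alpha$-entropy of $h_0$ as $\varepsilon\to 0$; the choice $\theta<2/5$ in (\ref{D:inreg}) is what guarantees this. Your remark on ``uniform-in-$\varepsilon$ finiteness'' is pointed at the right issue but does not account for this extra term, and your formal computation leading to (E:b1)-type quantities silently replaces $f_\varepsilon$ by $f$.

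\textbf{A genuinely different (and arguably cleaner) closure.} Once the correct $G^{(\alpha)}_\varepsilon$ is in place, your method of absorbing $a_1\int h^\alpha h_x^2$ — rewriting it as $\tfrac{4}{(\alpha+2)^2}\|v_x\|_2^2$ with $v = h^{(\alpha+2)/2}$, interpolating $\|v_x\|_2^2 \lesssim \|v\|_2\|v_{xx}\|_2 + \|v\|_2^2$, absorbing a small multiple of $\|v_{xx}\|_2^2\lesssim D$ into the left side, and bounding $\|v\|_2^2\leq\|h\|_\infty^{\alpha+2}|\Omega|$ using the uniform $L^\infty$ control already supplied by Lemma \ref{MainAE} — does close the estimate, and in fact gives a \emph{linear} differential inequality so that no nonlinear Gr\"onwall is needed and the conclusion holds on all of $[0,T_{loc}]$. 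The paper instead re-derives the $H^1$ bound in tandem with the $\alpha$-entropy bound (its ``blended'' estimate combining (\ref{E:b9a}) with (\ref{E:b9b})) and uses the nonlinear Gr\"onwall lemma \cite{Bihari}, which is what produces the possibly shorter time $T_{loc}^{(\alpha)}\leq T_{loc}$. Your route is more economical and leverages information already established; the paper's is self-contained but potentially wasteful in the time interval. Either closure is sound once the $\varepsilon$-entropy issue above is fixed.

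Finally, the passage to the limit $\varepsilon\to 0$ extracting $h^{(\alpha+2)/2}\rightharpoonup$ in $L^2_t H^2_x$ and $h^{(\alpha+2)/4}\rightharpoonup$ in $L^2_t W^1_{4,x}$ needs to be said explicitly (the paper cites the argument of \cite[Lemma 2.5]{DGG}); your last paragraph gestures at this but should name it.
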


The solutions from Theorem \ref{C:Th1.3} are often called ``strong''
solutions in the thin film literature. If the initial data satisfies
$\int h_0^{\alpha-1} \; dx < \infty$ then the added regularity from
Theorem \ref{C:Th1.3} allows one to prove the existence of
nonnegative solutions with an integral formulation
\cite{BertPugh1996} that is similar to that of (\ref{alt_int})
except that the second integral is replaced by the results of one
more integration by parts (there are no $h_{xx}$ terms). We also
note that if one considered problem (P) with nonlinearity $f(h) =
|h|^n$ with $0 < n < 3$, then Theorems \ref{C:Th1} and \ref{C:Th1.3}
would hold for general nonnegative initial data $h_0 \in
H^1(\Omega)$; no ``finite entropy'' assumption would be needed
\cite{BertPugh1996, B2}. Finite entropy conditions ($\int
h_0^{2-n}\; dx < \infty$ and $\int h_0^{\alpha+2-n}\; dx < \infty$)
would be needed to obtain the results for $n \geq 3$.

\subsection{Regularized Problem}

\label{RegularizedProblem}

Given $\delta, \eps > 0$, a regularized parabolic
problem, similar to that of Bernis and
Friedman \cite{B8},
is considered:
\begin{numcases}
{(\textup{P}_{\delta,\epsilon})}
 h_t  + \left( { f_{\delta \eps}(h)
\bigl(a_0 h_{xxx} + a_1 h_x + a_2 w'(x) \bigr) }
\right)_x + a_3 h_x = 0, \qquad \quad \hfill \label{D:1r'}\\
\tfrac{\partial^{i} h}{\partial x^i}(-a,t) = \tfrac{\partial^{i}
h}{\partial x^i}(a,t) \text{ for }
t > 0,\, i = \overline{0,3} , \hfill \label{D:2r'}\\
\qquad  \qquad h(x,0) = h_{0,\delta \eps}(x), \hfill \label{D:3r'}
\end{numcases}
where
\begin{equation}\label{D:reg1}
f_{\delta \varepsilon} (z) := f_\eps(z) + \delta =
\tfrac{|z|^4}{|z| + \varepsilon} + \delta \quad \ \forall\, z \in
\mathbb{R}^1,\ \delta>0,\ \varepsilon >0.
\end{equation}
The $\delta>0$ in (\ref{D:reg1}) makes the problem (\ref{D:1r'})
regular (i.e. uniformly parabolic). The parameter $\eps$ is an
approximating parameter which has the effect of increasing the
degeneracy from $f(h) \sim |h|^3$ to $f_{\eps}(h) \sim h^4$. The
nonnegative initial data, $h_0$, is approximated via
\begin{equation}\label{D:inreg}
\begin{gathered}
h_{0,\delta \eps} =
h_{0,\delta} + \eps^\theta \in C^{4+\gamma}(\Omega)
\text{ for some } \theta \in (0,2/5) \text{ and } \gamma  \text{ from }  (\ref{B:w} )\\
\tfrac{\partial^{i} h_{0,\delta \eps}}{\partial x^i}(-a) =
\tfrac{\partial^{i}h_{0,\delta \eps}}{\partial x^i}(a)
\text{ for } i=\overline{0,3}, \\
h_{0,\delta \varepsilon} \to h_{0}  \text{ strongly in } H^1(\Omega)
\text{ as } \delta, \varepsilon \to 0.
\end{gathered}
\end{equation}
 The $\varepsilon$ term in (\ref{D:inreg}) ``lifts'' the initial data so that it will be positive even if
$\delta = 0$ and the $\delta$ is involved in
smoothing the initial data from $H^1(\Omega)$ to $C^{4+\gamma}(\Omega)$.

By E\u{i}delman \cite[Theorem 6.3, p.302]{Ed}, the regularized
problem has a unique classical solution $h_{\delta \eps} \in
C_{x,t}^{4+\gamma,1+\gamma/4}( \Omega \times [0, \tau_{\delta
\eps}])$ for some time $\tau_{\delta \eps} > 0$.
For any fixed value of  $\delta$ and $\eps$, by E\u{i}delman \cite
[Theorem 9.3, p.316]{Ed} if one can prove an uniform in time a
priori bound $|h_{\delta \eps}(x,t)| \leq A_{\delta \eps}<\infty$
for some longer time interval $[0,T_{loc,\delta \eps}] \quad
(T_{loc,\delta \eps} > \tau_{\delta \eps}$) and for all $x \in
\Omega$ then Schauder-type interior estimates \cite [Corollary 2,
p.213] {Ed} imply that the solution $h_{\delta \eps}$ can be
continued in time to be in $C_{x,t}^{4+\gamma,1+\gamma/4}( \Omega
\times [0,T_{loc,\delta \eps}])$.

Although the solution $h_{\delta \eps}$ is initially positive, there
is no guarantee that it will remain nonnegative. The goal is to take
$\delta \to 0$, $\epsilon \to 0$ in such a way that 1)
$T_{loc,\delta \eps} \to T_{loc} > 0$, 2) the solutions $h_{\delta
\eps}$ converge to a (nonnegative) limit, $h$, which is a
generalized weak solution, and 3) $h$ inherits certain a priori
bounds.  This is done by proving various a priori estimates for
$h_{\delta \eps}$ that are uniform in  $\delta$ and $\eps$ and hold
on a time interval $[0,T_{loc}]$ that is independent of $\delta$ and
$\eps$.  As a result, $\{ h_{\delta \eps} \}$ will be a uniformly
bounded and equicontinuous (in the $C_{x,t}^{1/2,1/8}$ norm) family
of functions in $\bar{\Omega} \times [0, T_{loc}]$. Taking $\delta
\to 0$ will result in a  family of functions $\{ h_{\eps} \}$ that
are classical, positive, unique solutions to the regularized problem
with $\delta = 0$.   Taking $\eps \to 0$ will then result in the
desired generalized weak solution $h$.  This last  step is where the
possibility of nonunique weak solutions arise; see \cite{B2} for
simple examples of how such constructions applied to $h_t = - (|h|^n
h_{xxx})_x$ can result in two different solutions arising from the
same initial data.

\subsection{A priori estimates}

Our first task is to derive a priori estimates for classical
solutions of (\ref{D:1r'})--(\ref{D:inreg}). The lemmas in this
section are proved in Section \ref{A_priori_proofs}.

We use an integral quantity based on a function
$G_{\delta \eps}$ chosen so that
\begin{equation}\label{D:reg2}
G''_{\delta \varepsilon} (z) = \tfrac{1}{f_{\delta \varepsilon} (z)}
\quad \mbox{and} \quad G_{\delta \eps}(z) \geq 0.
\end{equation}
This is analogous to the ``entropy'' function first introduced by
Bernis and Friedman \cite{B8}.

\begin{lemma}\label{MainAE}
There exists $\delta_0 > 0$, $\eps_0 > 0$, and time $T_{loc}>0$ such
that if $\delta \in [0,\delta_0)$, $\eps \in (0,\eps_0)$, if
$h_{\delta \eps}$ is a classical solution of the problem
(\ref{D:1r'})--(\ref{D:inreg}) with initial data $h_{0,\delta
\eps}$, and if $h_{0,\delta \eps}$ satisfies (\ref{D:inreg}) and is
built from a nonnegative function $h_0$ that satisfies the
hypotheses of Theorem \ref{C:Th1} then for any $T \in [0, T_{loc}]$
the solution $h_{\delta \eps}$ satisfies
\begin{align}\label{D:a13''}
& \int\limits_{\Omega} { \{h_{\delta \eps, x}^2(x,T) + \tfrac{|a_1|}{a_0}\left( \tfrac{|a_1|}{a_0}
+ 2 \delta  \right) G_{\delta \varepsilon}(h_{\delta \eps}(x,T))\} \,dx} \\
& \hspace{1in}+ \notag a_0 \iint\limits_{Q_T} { f_{\delta
\varepsilon}(h_{\delta \eps}) h^2_{\delta \eps, xxx}  \,dx dt}
\leqslant K_1 < \infty,
\end{align}
\begin{equation} \label{BF_entropy}
\int\limits_\Omega G_{\delta \eps}(h_{\delta \eps}(x,T)) \; dx + a_0 \iint\limits_{Q_T} h_{\delta \eps, xx}^2 \; dx dt
\leq K_2 < \infty,
\end{equation}
and the energy
$\mathcal{E}_{\delta \varepsilon} (t)$ (see (\ref{Energy})) satisfies:
\begin{align}\label{D:d2}
& \mathcal{E}_{\delta \varepsilon}(T) + \iint\limits_{Q_T}
{f_{\delta \varepsilon}(h_{\delta \eps}) (a_0 h_{\delta \eps,xxx} + a_1 h_{\delta \eps, x} + a_2 w')^2} \; dx dt \\
& \hspace{3in} \notag
\leqslant  C_0 + K_3 T
\end{align}
where $K_3 =|a_2 a_3|\mathop \| w' \|_\infty C < \infty$. The time
$T_{loc}$ and the constants $K_1$, $K_2$, $C_0$, and $K_3$ are
independent of $\delta$ and $\eps$.

\end{lemma}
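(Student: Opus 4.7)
The plan is to derive three differential identities by multiplying the regularized equation (\ref{D:1r'}) by carefully chosen test functions, integrating over $\Omega$, exploiting the periodicity (\ref{D:2r'}), and closing via a short-time Gronwall argument with constants uniform in $\delta,\varepsilon$. Writing $\Phi := a_0 h_{\delta\eps,xxx} + a_1 h_{\delta\eps,x} + a_2 w'$, the three identities come from the multipliers $-(a_0 h_{xx}+a_1 h + a_2 w)$ (energy), $G'_{\delta\eps}(h_{\delta\eps})$ (entropy), and $-h_{xx}$ ($H^1$ seminorm).

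For (\ref{D:d2}), differentiating $\mathcal{E}_{\delta\eps}$ and substituting $h_t = -(f_{\delta\eps}\Phi)_x - a_3 h_x$, all boundary terms vanish and the transport pieces satisfy $\int h_{xx}h_x\,dx=\int h\,h_x\,dx=0$, leaving
\[
\tfrac{d}{dt}\mathcal{E}_{\delta\eps} + \int_\Omega f_{\delta\eps}(h_{\delta\eps})\,\Phi^2\,dx = -a_2 a_3\!\int_\Omega w'(x)\,h_{\delta\eps}(x,t)\,dx.
\]
Mass conservation $\int h_{\delta\eps}(\cdot,t)\,dx = \int h_{0,\delta\eps}\,dx$ together with Poincar\'e--Wirtinger yields $\|h_{\delta\eps}(\cdot,t)\|_{L^1}\leq C(1+\|h_{\delta\eps,x}\|_{L^2})$, and time-integration produces (\ref{D:d2}) with $K_3 = |a_2 a_3|\|w'\|_\infty C$. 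For (\ref{BF_entropy}), the multiplier $G'_{\delta\eps}$ annihilates the $a_3$-piece by periodicity ($a_3\!\int \partial_x G_{\delta\eps}(h)\,dx=0$), while the key relation $f_{\delta\eps}G''_{\delta\eps}\equiv 1$ collapses the main term to $\int h_x \Phi\,dx$; one further integration by parts gives
\[
\tfrac{d}{dt}\!\int_\Omega G_{\delta\eps}(h_{\delta\eps})\,dx + a_0\!\int_\Omega h_{\delta\eps,xx}^2\,dx = a_1\!\int_\Omega h_{\delta\eps,x}^2\,dx + a_2\!\int_\Omega w' h_{\delta\eps,x}\,dx,
\]
which closes into (\ref{BF_entropy}) using the $H^1$ control from the energy step.

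For the combined estimate (\ref{D:a13''}), multiplying (\ref{D:1r'}) by $-h_{xx}$ produces
\[
\tfrac{d}{dt}\tfrac{1}{2}\|h_{\delta\eps,x}\|_2^2 + a_0\!\int f_{\delta\eps}\,h_{\delta\eps,xxx}^2\,dx = -a_1\!\int f_{\delta\eps}\,h_{\delta\eps,xxx}h_{\delta\eps,x}\,dx - a_2\!\int f_{\delta\eps}\,h_{\delta\eps,xxx}w'\,dx.
\]
Splitting $f_{\delta\eps}=f_\eps+\delta$ and using $\delta\!\int h_{xxx}h_x\,dx = -\delta\!\int h_{xx}^2\,dx$ brings out a $\delta$-weighted $\int h_{xx}^2$-term which pairs cleanly with the entropy identity; forming the linear combination with weight $\tfrac{|a_1|}{a_0}\!\bigl(\tfrac{|a_1|}{a_0}+2\delta\bigr)$ in front of the entropy makes the $\int h_{xx}^2$-contributions cancel, and Young's inequality absorbs the cross terms $-a_1 \int f_\eps h_{xxx}h_x$ and $-a_2 \int f_{\delta\eps}h_{xxx}w'$ into $\tfrac{a_0}{2}\int f_{\delta\eps}h_{xxx}^2$ plus lower-order quantities controlled by $\|h\|_\infty\leq C\|h\|_{H^1}$. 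Integrating in time and invoking Gronwall gives (\ref{D:a13''}).

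The main obstacle is that each right-hand side involves precisely the quantities being bounded (in particular $\|h\|_{L^\infty}$ appearing in $\int f_{\delta\eps}h_x^2$ through $\|h\|_{H^1}$-Sobolev), so the combined inequality is of the schematic form $y'(t)\leq C_1 + C_2\,y(t)^{\alpha}$ with $\alpha>1$, which blows up in finite time but stays bounded on some nontrivial interval $[0,T_{loc}]$ determined by $y(0)$. This explains the short-time character of the estimates and is also the source of the long-wave instability when $a_1>0$ (the energy fails to be coercive, so no global-in-time bound is available from this method). Uniformity of $T_{loc}$, $K_1$, $K_2$, $C_0$, $K_3$ in $\delta$ and $\varepsilon$ is the delicate but routine point: it follows from the $H^1$-convergence in (\ref{D:inreg}) and from the finite-entropy hypothesis (\ref{C:inval}), which, since $G_{\delta\eps}(z)\sim 1/(2|z|)$ for $|z|\gg\varepsilon$ and the $\varepsilon^\theta$ lift keeps $h_{0,\delta\eps}$ bounded away from zero, ensures $\int G_{\delta\eps}(h_{0,\delta\eps})\,dx$ remains uniformly bounded as $\delta,\varepsilon\to 0$.
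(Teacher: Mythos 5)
Your approach is essentially the same as the paper's: the three multipliers $-h_{xx}$, $G'_{\delta\eps}(h)$, and $-(a_0 h_{xx}+a_1 h+a_2 w)$; the ``blended'' $H^1$--entropy combination with weight $\tfrac{|a_1|}{a_0}\bigl(\tfrac{|a_1|}{a_0}+2\delta\bigr)$ chosen to eliminate the $\iint h_{xx}^2$ terms; the nonlinear Gr\"{o}nwall closure of an inequality of the form $y'\leq C_1+C_2 y^\alpha$ giving a positive $T_{loc}$; and the uniformity argument via the convergence of $h_{0,\delta\eps}$, the $\eps^\theta$ lift, and the finite-entropy hypothesis (\ref{C:inval}). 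One logical misstep in the ordering: you derive (\ref{D:d2}) first, then claim that (\ref{BF_entropy}) ``closes \ldots using the $H^1$ control from the energy step.'' But $\mathcal{E}_{\delta\eps}$ is not coercive in $H^1$ when $a_1>0$ (as you yourself observe later), so (\ref{D:d2}) does not by itself furnish an $H^1$ bound; moreover, controlling the right-hand term $-a_2 a_3\int w'\,h_{\delta\eps}\,dx$ in the energy identity via mass conservation and Poincar\'e--Wirtinger already requires a bound on $\|h_{\delta\eps,x}\|_{L^2}$. The correct dependency (and the paper's order) is: combine the $-h_{xx}$ and $G'_{\delta\eps}$ identities first to obtain (\ref{D:a13''}) and hence the uniform $H^1$ (and $L^\infty$) control; then use that control to close (\ref{BF_entropy}) from the entropy identity alone; and only then derive (\ref{D:d2}), bounding the advection contribution by $|a_2 a_3|\,\|w'\|_\infty\bigl(\text{const.}\sqrt{K_1}+2M\bigr)$. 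With this reordering your argument matches the paper's.
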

The existence of $\delta_0$, $\eps_0$, $T_{loc}$, $K_1$, $K_2$, and
$K_3$ is constructive; how to find them and what quantities
determine them is shown in Section \ref{A_priori_proofs}.

Lemma \ref{MainAE} yields uniform-in-$\delta$-and-$\eps$ bounds for
$\int h_{\delta \eps,x}^2$, $\int G_{\delta \eps}(h_{\delta \eps})$,
$\iint h_{\delta \eps,xx}^2$, and $\iint f_{\delta \eps}(h_{\delta
\eps}) h_{\delta \eps,xxx}^2$.  However, these bounds are found in a
different manner than in earlier work for the equation $h_t = -
(|h|^n h_{xxx})_x$, for example.  Although the inequality
(\ref{BF_entropy}) is unchanged, the inequality (\ref{D:a13''}) has
an extra term involving $G_{\delta \eps}$.  In the proof, this term
was introduced to control additional, lower--order terms. This idea
of a ``blended'' $\| h_x \|_2$--entropy bound was first introduced
by Shishkov and Taranets especially for long-wave stable thin film
equations with convection \cite{T4}.

The final a priori bound uses the following functions, parametrized
by $\alpha$,
\begin{equation}\label{E:reg4}
G_{\varepsilon}^{(\alpha)} (z): =\tfrac{z^{\alpha -
1}}{(\alpha-1)(\alpha - 2)} + \tfrac{\varepsilon z^{\alpha
-2}}{(\alpha - 3)(\alpha - 2)}; \ (G^{(\alpha)}_{\varepsilon} (z))''
= \tfrac{z^{\alpha}}{f_{\varepsilon} (z)}.
\end{equation}

\begin{lemma}\label{MainAE2}
Assume $\eps_0$ and $T_{loc}$ are from Lemma \ref{MainAE}, $\delta =
0$, and $\eps \in (0,\eps_0)$.  Assume $h_\eps$ is a positive,
classical solution of the problem (\ref{D:1r'})--(\ref{D:inreg})
with initial data $h_{0,\eps}$ satisfying Lemma \ref{MainAE}.
Fix
$\alpha \in (-1/2,1)$ with $\alpha \neq 0$. If the initial data
$h_{0,\eps}$ is built from $h_0$ which also satisfies
\begin{equation} \label{finite_alpha_ent}
\int\limits_\Omega h_0^{\alpha - 1}(x) \; dx < \infty
\end{equation}
then there exists $\eps_0^{(\alpha)}$ and $T_{loc}^{(\alpha)}$ with
$0 < \eps_0^{(\alpha)} \leq \eps_0$ and $0 < T_{loc}^{(\alpha)} \leq
T_{loc}$ such that
\begin{align}
\label{E:b12''}
& \int\limits_{\Omega} { \{h_{\eps,x}^2(x,T) + G_{\varepsilon}^{(\alpha)}(h_\eps(x,T))\}
\,dx} \\
& \hspace{1in} \notag + \iint\limits_{Q_T} \left[ \beta
h_\eps^\alpha h_{\eps,xx}^2 + \gamma h_\eps^{\alpha-2} h_{\eps,x}^4
\right]\;dx\,dt \leqslant K_4 < \infty
\end{align}
holds for all $T \in [0 , T_{loc}^{(\alpha)} ]$ and some constant
$K_4$ that is determined by $\alpha$, $\eps_0$, $a_0$, $a_1$, $a_2$,
$w'$, $\Omega$ and $h_0$. Here,
$$
\beta =
\begin{cases}
a_0 & \mbox{if } \alpha \in (0,1), \\
a_0 \tfrac{1+2\alpha}{4(1-\alpha)} &\mbox{if } \alpha \in (-1/2,0) ,
\end{cases}
\,\, \gamma =
\begin{cases}
a_0 \tfrac{\alpha(1-\alpha)}{6}&\mbox{if } \alpha \in (0,1) , \\
a_0 \tfrac{(1+2\alpha)(1-\alpha)}{36} & \mbox{if } \alpha \in
(-1/2,0).
\end{cases}
$$
 Furthermore,
\begin{equation}\label{E:b13}
h_\eps^{\tfrac{\alpha+2}{2}} \in L^{2}(0, T_{loc}; H^2(\Omega))
\quad \mbox{and} \quad
h_\eps^{\tfrac{\alpha+2}{4}} \in L^{2}(0,
T_{loc}; W^1_4(\Omega))
\end{equation}
with a uniform-in-$\eps$ bound.
\end{lemma}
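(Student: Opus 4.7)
The plan is to derive (\ref{E:b12''}) by combining two in-time differential identities for the positive classical solution $h_\eps$: the $H^1$-energy identity already established (with $\delta = 0$) in Lemma \ref{MainAE}, and a new $\alpha$-entropy identity obtained by multiplying (\ref{D:1r'}) by $(G_\eps^{(\alpha)})'(h_\eps)$ and integrating over $Q_T$. Under periodic boundary conditions the transport term $a_3 h_{\eps,x}$ integrates out, and the defining property $(G_\eps^{(\alpha)})''(z) f_\eps(z) = z^\alpha$, combined with two integrations by parts, reduces the fourth-order contribution to
$$a_0 \int_\Omega h_\eps^\alpha h_{\eps,x} h_{\eps,xxx}\,dx = -a_0 \int_\Omega h_\eps^\alpha h_{\eps,xx}^2\,dx - a_0\alpha\int_\Omega h_\eps^{\alpha-1} h_{\eps,x}^2 h_{\eps,xx}\,dx.$$
Rewriting $h_x^2 h_{xx} = \tfrac{1}{3}(h_x^3)_x$ and integrating once more by parts converts the last integral into $\tfrac{1-\alpha}{3}\int h_\eps^{\alpha-2} h_{\eps,x}^4\,dx$, so that the entropy identity acquires the positive dissipation $a_0\int h_\eps^\alpha h_{\eps,xx}^2$ together with a signed term $\tfrac{a_0\alpha(1-\alpha)}{3}\int h_\eps^{\alpha-2} h_{\eps,x}^4$ and a right-hand side consisting of the lower-order $a_1 h_{\eps,x}$ and $a_2 w'$ contributions.

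For $\alpha \in (0,1)$ both coefficients on the left are positive, and I would reserve half of each dissipation term to absorb the $a_1$, $a_2 w'$ and, after blending with the energy identity, the $a_3 h_x$ contributions by Young's inequality (with $H^1$-control furnished by Lemma \ref{MainAE}); this produces precisely $\beta = a_0$ and $\gamma = a_0\alpha(1-\alpha)/6$. For $\alpha \in (-1/2,0)$ the factor $\alpha(1-\alpha)$ is negative, so the identity is no longer manifestly dissipative. Here the plan is to treat the left-hand side as a pointwise quadratic form in the pair $(h_\eps^{\alpha/2} h_{\eps,xx}, h_\eps^{(\alpha-2)/2} h_{\eps,x}^2)$ and add a suitable positive multiple of the nonnegative quantity $\int_\Omega ((h_\eps^{(\alpha+2)/2})_{xx})^2\,dx$, whose expansion (via the same $h_x^2 h_{xx}$ identity) involves exactly these same two integrals. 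The resulting quadratic form is positive definite precisely when $1+2\alpha > 0$; this is both the source of the restriction $\alpha > -1/2$ and, after optimization of the blending weight, pins down the coefficients $\beta = a_0(1+2\alpha)/(4(1-\alpha))$ and $\gamma = a_0(1+2\alpha)(1-\alpha)/36$.

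Adding the entropy and energy identities yields a scalar differential inequality for $\mathcal{F}_\eps(T) := \int_\Omega (h_{\eps,x}^2 + G_\eps^{(\alpha)}(h_\eps))\,dx$ whose right-hand side depends polynomially on $\mathcal{F}_\eps$ through Sobolev interpolation of the $a_2 w'$ and $a_3 h_x$ contributions. The initial value $\int_\Omega G_\eps^{(\alpha)}(h_{0,\eps})\,dx$ is uniformly bounded in $\eps$ by hypothesis (\ref{finite_alpha_ent}), the approximation (\ref{D:inreg}), and the explicit formula (\ref{E:reg4}) for $G_\eps^{(\alpha)}$. A standard continuation argument then supplies a short time $T_{loc}^{(\alpha)} \in (0, T_{loc}]$, depending only on the quantities listed, on which $\mathcal{F}_\eps$ remains uniformly bounded and (\ref{E:b12''}) holds. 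The regularity (\ref{E:b13}) is immediate from the pointwise bounds $((h^{(\alpha+2)/2})_{xx})^2 \leq C_\alpha(h^\alpha h_{xx}^2 + h^{\alpha-2} h_x^4)$ and $((h^{(\alpha+2)/4})_x)^4 \leq C_\alpha\, h^{\alpha-2} h_x^4$ combined with (\ref{E:b12''}). I expect the principal obstacle to be the sign analysis in the range $\alpha \in (-1/2,0)$: achieving positive definiteness of the blended quadratic form, and thereby exposing $\alpha = -1/2$ as the sharp threshold, is what drives the structure of the proof.
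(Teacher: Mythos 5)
Your proposal tracks the paper's proof closely in overall architecture: the $\alpha$-entropy identity from multiplying (\ref{D:1r'}) by $(G_\eps^{(\alpha)})'(h_\eps)$, absorption by Cauchy for $\alpha\in(0,1)$, the blend with the $H^1$ energy inequality, nonlinear Gr\"onwall to produce $T_{loc}^{(\alpha)}$, and deduction of (\ref{E:b13}) from the two flux integrals. However, the specific device you name for $\alpha\in(-1/2,0)$ does not work as stated. Expanding $\iint ((h_\eps^{(\alpha+2)/2})_{xx})^2\,dx\,dt$ via the integrated identity $\iint h^{\alpha-1}h_x^2 h_{xx}=\tfrac{1-\alpha}{3}\iint h^{\alpha-2}h_x^4$ gives a positive multiple of $\iint h^\alpha h_{xx}^2 + \tfrac{\alpha(4-\alpha)}{12}\iint h^{\alpha-2}h_x^4$, and for $\alpha<0$ the $h^{\alpha-2}h_x^4$-coefficient here is \emph{negative}; adding a positive multiple of this to the left side of (\ref{E:b1}) only worsens the indefinite term, and in any case one would need to \emph{subtract} a nonnegative square (or add it to both sides) to bound the left side from below. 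The paper's mechanism is instead Cauchy--Schwarz applied to the cross term $\iint h^{\alpha-1}h_x^2 h_{xx}$, which combined with the same identity yields (\ref{E:b4}), $\iint h^{\alpha-2}h_x^4 \le \tfrac{9}{(1-\alpha)^2}\iint h^\alpha h_{xx}^2$; substituting this into (\ref{E:b1}) is what exposes the factor $1+2\alpha$ and hence the $\alpha>-1/2$ threshold, and a second application of (\ref{E:b4}) restores the $\gamma\iint h^{\alpha-2}h_x^4$ term with the stated constant. Your intuition that positive definiteness of a quadratic form governs the threshold is correct, but the relevant completed square is $h^{\alpha/2}h_{xx}-\tfrac{1-\alpha}{3}h^{(\alpha-2)/2}h_x^2$, not $(h^{(\alpha+2)/2})_{xx}=\tfrac{\alpha+2}{2}\bigl(h^{\alpha/2}h_{xx}+\tfrac{\alpha}{2}h^{(\alpha-2)/2}h_x^2\bigr)$. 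With that step replaced, the rest of your proposal coincides with the paper's proof.
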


The $\alpha$--entropy, $\int G_0^{(\alpha)}(h) \; dx$, was first
introduced for $\alpha = - 1/2$ in \cite{BertozziBrenner} and an a
priori bound like that of Lemma \ref{MainAE2} and regularity results
like those of Theorem \ref{C:Th1.3} were found simultaneously and
independently in \cite{B2} and \cite{BertPugh1996}.

\subsection{Proof of existence and regularity of solutions}
Bound (\ref{D:a13''}) yields uniform  $L^\infty$ control for
classical solutions $h_{\delta \eps}$, allowing the time of
existence $T_{loc,\delta \eps}$ to be taken as $T_{loc}$ for all
$\delta \in (0,\delta_0)$ and $\eps \in (0,\eps_0)$. The existence
theory starts by constructing a classical solution $h_{\delta \eps}$
on $[0,T_{loc}]$ that satisfy the hypotheses of Lemma \ref{MainAE}
if $\delta \in (0,\delta_0)$ and $\eps \in (0,\eps_0)$. The
regularizing parameter, $\delta$, is taken to zero and one proves
that there is a limit $h_\eps$ and that $h_\eps$ is a generalized
weak solution. One then proves additional regularity for $h_\eps$;
specifically that it is strictly positive, classical, and unique. It
then follows that the a priori bounds given by Lemmas \ref{MainAE},
and \ref{MainAE2} apply to $h_\eps$. This allows us to take the
approximating parameter, $\eps$, to zero and construct the desired
generalized weak solution of Theorems \ref{C:Th1} and \ref{C:Th1.3}.

\begin{lemma}
\label{preC:Th1} Assume that the initial data $h_{0,\eps}$ satisfies
(\ref{D:inreg}) and is built from a nonnegative function $h_0$ that
satisfies the hypotheses of Theorem \ref{C:Th1}. Fix  $\delta = 0$
and $\eps \in (0,\eps_0)$ where $\eps_0$ is from Lemma \ref{MainAE}.
Then there exists a unique, positive, classical solution $h_\eps$ on
$[0,T_{loc}]$ of problem ($\mbox{P}_{0, \eps}$), see
(\ref{D:1r'})--(\ref{D:inreg}), with initial data $h_{0,\eps}$ where
$T_{loc}$ is the time from Lemma \ref{MainAE}.
\end{lemma}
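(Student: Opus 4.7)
The plan is to obtain $h_\eps$ as the $\delta \to 0$ limit of the classical solutions $h_{\delta,\eps}$ of the uniformly parabolic regularized problem $(\mbox{P}_{\delta, \eps})$ and then bootstrap to strict positivity, classical regularity, and uniqueness. First I would fix $\eps \in (0, \eps_0)$ and let $\delta \in (0, \delta_0)$. For each such $\delta$, E\u{\i}delman's theory produces a classical solution $h_{\delta, \eps} \in C^{4+\gamma,\, 1+\gamma/4}_{x,t}$ on a short time interval. Lemma \ref{MainAE} supplies uniform-in-$\delta$ bounds on $[0, T_{loc}]$: control of $\|h_{\delta,\eps,x}\|_{L^\infty(0,T_{loc}; L^2(\Omega))}$ (and hence of $\|h_{\delta,\eps}\|_{L^\infty}$ after combining with the conservation of mass that follows from the periodic boundary conditions), of $\|h_{\delta,\eps,xx}\|_{L^2(Q_{T_{loc}})}$, and of $\|\sqrt{f_{\delta,\eps}(h_{\delta,\eps})}(a_0 h_{\delta,\eps,xxx} + a_1 h_{\delta,\eps,x} + a_2 w')\|_{L^2(Q_{T_{loc}})}$. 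The $L^\infty$ bound lets me apply E\u{\i}delman \cite[Theorem 9.3]{Ed} to extend $h_{\delta,\eps}$ as a classical solution on the whole interval $[0, T_{loc}]$.

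Next I would extract a subsequential limit. Rewriting the PDE and using the estimates above, $\partial_t h_{\delta,\eps}$ is bounded in $L^2(0, T_{loc}; (H^1(\Omega))')$ uniformly in $\delta$, so Aubin--Lions compactness produces a subsequence $h_{\delta_k,\eps}$ converging to some $h_\eps$ strongly in $C([0,T_{loc}]; L^2(\Omega))$, uniformly in the $C^{1/2, 1/8}_{x,t}$ norm by Arzel\`a--Ascoli, and weakly in $L^2(0, T_{loc}; H^2(\Omega))$, with weak convergence of the time derivatives in the dual space. Passing to the limit in the weak formulation of $(\mbox{P}_{\delta,\eps})$ shows that $h_\eps$ is a weak solution of $(\mbox{P}_{0, \eps})$ with initial datum $h_{0,\eps}$.

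The crucial step is strict positivity of $h_\eps$. From (\ref{D:reg2}) with $\delta = 0$ one has $G_{0,\eps}''(z) = (|z|+\eps)/z^4$, which integrates to an entropy density satisfying $G_{0,\eps}(z) \to +\infty$ as $z \to 0$ for each fixed $\eps > 0$. The uniform entropy bound (\ref{BF_entropy}), together with pointwise convergence $h_{\delta_k,\eps} \to h_\eps$ and the lower semicontinuity of the convex functional $\int G_{0,\eps}(\cdot)\,dx$ (Fatou), gives $\int_\Omega G_{0, \eps}(h_\eps(x,T))\,dx \leq K_2$ for every $T \in [0, T_{loc}]$. This is incompatible with $h_\eps$ touching zero, and by continuity of $h_\eps$ on the compact cylinder $\overline{\Omega} \times [0, T_{loc}]$ the pointwise positivity upgrades to a uniform lower bound $h_\eps \geq c_\eps > 0$. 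On this set the equation $(\mbox{P}_{0,\eps})$ is uniformly parabolic along $h_\eps$, so Schauder-type interior estimates \cite[Corollary 2, p.~213]{Ed} promote $h_\eps$ to $C^{4+\gamma, 1+\gamma/4}_{x,t}(\Omega \times [0, T_{loc}])$. Uniqueness then follows from a standard energy argument applied to the difference of two such strictly positive classical solutions sharing the initial datum: the uniform lower bound removes the degeneracy of $f_\eps$, reducing the problem to a quasilinear uniformly parabolic fourth-order equation for which a Gr\"onwall estimate on $\|h_\eps^{(1)} - h_\eps^{(2)}\|_{L^2}$ closes.

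The main obstacle is the positivity step: one must simultaneously inherit the entropy bound under only weak/pointwise convergence and then convert pointwise positivity of $h_\eps$ into a genuine uniform lower bound on the parabolic cylinder, since only such a uniform bound licenses the Schauder machinery that yields classical regularity and uniqueness.
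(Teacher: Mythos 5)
Your proposal is correct and follows essentially the same route as the paper, which simply refers the reader to Bernis and Friedman (the $\delta\to 0$ limit for construction, the entropy bound for strict positivity, uniform parabolicity once bounded away from zero for classical regularity, and an energy argument for uniqueness). You fill in the details the paper outsources to \cite{B8}; the only soft spot is the passage from the uniform bound on $\int G_{\delta_k\eps}(h_{\delta_k\eps})$ to $\int G_{0\eps}(h_\eps)$, which needs the monotone-in-$\delta$ truncation argument rather than a bare Fatou/lower-semicontinuity appeal, but this is the same technical detail the paper also elides by citing \cite[Theorem 4.1]{B8}.
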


\begin{proof}
Arguing the same way as Bernis \& Friedman \cite{B8} one can
construct a generalized weak solution $h_\eps$. We now prove that
this $h_\eps$ is a strictly positive, classical, unique solution.
This uses the entropy $\int G_{\delta \eps}(h_{\delta \eps})$ and
the a priori bound (\ref{BF_entropy}). This bound  is, up to the
coefficient $a_0$, identical to the a priori bound (4.17) in
\cite{B8}. By construction, the initial data $h_{0,\eps}$ is
positive (see (\ref{D:inreg})), hence $\int G_{\eps}(h_{0,\eps}) \;
dx < \infty$. Also, by construction $f_\eps(z) \sim z^4$ for $z \ll
1$.  This implies that the generalized weak solution $h_\eps$ is
strictly positive \cite[Theorem 4.1]{B8}.  Because the initial data
$h_{0,\eps}$ is in $C^4(\Omega)$, it follows that $h_\eps$ is a
classical solution in $C^{4,1}_{x,t}(\overline{Q_{T_{loc}}})$. The
proof of  Theorem 4.1 in \cite{B8} then implies that $h_\eps$ is
unique.
\end{proof}

\begin{proof}[Proof of Theorem \ref{C:Th1}]
As in the proof of Lemma \ref{preC:Th1}, following \cite{B8}, there
is a subsequence $\{ \eps_k \}$ such that $h_{\eps_k}$ converges
uniformly to a function $h \in C^{1/2,1/8}_{x,t}$ which is a
generalized weak solution in the sense of Definition \ref{B:defweak}
with $f(h) = |h|^3$.

The initial data is assumed to have finite entropy: $\int 1/h_0 < \infty$.
This, combined with $f(h) = |h|^3$, implies that the
generalized weak solution $h$ is nonnegative and the set of points $\{ h = 0 \}$
in $Q_{T_{loc}}$
has zero measure  \cite[Theorem 4.1]{B8}.

To prove (\ref{C:d2'}), start by taking $T=T_{loc}$ in the a priori
bound (\ref{D:d2}). As $\eps_k \to 0$, the right-hand side of
(\ref{D:d2}) is unchanged. First, consider the $\eps_k \to 0$ limit
of
$$
\mathcal{E}_{\eps_k}(T_{loc})
=
 \tfrac{1}{2}\int\limits_{\Omega} {a_0
h_{\eps_k,x}^2(x,T_{loc}) - a_1 h_{\eps_k}^2(x,T_{loc}) - 2a_2 w(x)
h_{\eps_k}(x,T_{loc})dx}.
$$
By the uniform convergence of $h_{\eps_k}$ to $h$, the second and
third terms in the energy converge strongly as $\eps_k \to 0$. The
bound (\ref{D:d2}) yields a uniform bound on $\{   \int_\Omega
h_{\eps_k,x}^2(x,T_{loc}) \; dx \}$. Taking a further refinement of
$\{ \eps_k \}$, yields $h_{\eps_k,x}(\cdot,T_{loc})$ converging
weakly in $L^2(\Omega)$.  In a Hilbert space, the norm of the weak
limit is less than or equal to the $\liminf$ of the norms of the
functions in the sequence, hence $ \mathcal{E}_0( T_{loc} ) \leq
\liminf_{\eps_k \to 0} \mathcal{E}_{\eps_k}(T_{loc}). $ A uniform
bound on $\iint f_\eps(h_\eps) \left(a_0 h_{\eps,xxx} + \dots
\right)^2 \; dx$ also follows from (\ref{D:d2}).  Hence
$\sqrt{f_{\eps_k}(h_{\eps_k})} \left(a_0 h_{\eps_k,xxx} + \dots
\right)$ converges weakly in $L^2(Q_{T_{loc}})$, after taking a
further subsequence. It suffices to determine the weak limit up to a
set of measure zero. Because $h \geq 0$ and $\{ h = 0 \}$ has
measure zero, it suffices to determine the weak limit on $\{ h > 0
\}$.

The regularity theory for parabolic equations allows one to argue
that $h \in C^{4,1}_{x,t}(\mathcal{P})$, and the weak limit is $
h^{3/2} \left(a_0 h_{xxx} + \dots \right)$ on $\{ h > 0 \}$. Using
that 1) the norm of the weak limit is less than or equal to the
$\liminf$ of the norms of the functions in the sequence and that 2)
the $\liminf$ of a sum is greater than or equal to the sum of the
$\liminf$s, results in the desired bound (\ref{C:d2'}).

It follows from (\ref{BF_entropy}) that $h_{\eps_k,xx}$ converges
weakly to some $v$ in $L^2(Q_{T_{loc}})$, combining with strong
convergence in $L^2(0,T; H^1(\Omega ))$ of $h_{\eps_k}$ to $h$ by
Lemma \ref{A.1} and with the definition of weak derivative, we
obtain that $v = h_{xx}$ and $h \in L^2(0,T_{loc};$ $ H^2(\Omega))$
that implies (\ref{Linf_H2}). Hence $h_{\varepsilon,t} \to h_t
\text{ weakly in } L^{2}(0, T; (H^1(\Omega))')$ that implies
(\ref{weak-d}). By Lemma \ref{A.2} we also have $h \in
C([0,T_{loc}],L^2(\Omega))$.
\end{proof}

\begin{proof}[Proof of Theorem \ref{C:Th1.3}]
Fix $\alpha \in (-1/2,1)$. The initial data $h_0$ is assumed to have
finite entropy $\int G_0^{(\alpha)}(h_0(x)) \; dx < \infty$, hence
Lemma \ref{MainAE2} holds for the approximate solutions $\{
h_{\eps_k} \}$ where this sequence of approximate solutions is
assumed to be the one at the end of the proof of
Theorem~\ref{C:Th1}. By (\ref{E:b13}),
$$
\left\{ h_{\eps_k}^{\tfrac{\alpha+2}{2}} \right\}
\quad \mbox{is uniformly bounded in $\eps_k$ in
$L^{2}(0, T_{loc}; H^2(\Omega))$}
$$
and
$$
\left\{
h_{\eps_k}^{\tfrac{\alpha+2}{4}} \right\}
\quad \mbox{is uniformly bounded in $\eps_k$ in
$L^{2}(0, T_{loc}; W^1_4(\Omega))$}.
$$
Taking a further subsequence in $\{ \eps_k \}$, it follows from the
proof of \cite[Lemma 2.5, p.330]{DGG}, these sequences converge
weakly in $L^{2}(0, T_{loc}; H^2(\Omega))$ and $L^{2}(0,
T_{loc};W^1_4(\Omega))$, to $h^{\tfrac{\alpha+2}{2}}$ and $
h^{\tfrac{\alpha+2}{4}}$ respectively.
\end{proof}

\section{Long--time existence of solutions} \label{G}

\begin{lemma}\label{Marinas_bound}
Let $h \in H^1(\Omega)$ be a nonnegative function such that $\int
\limits_{\Omega} {h(x)\,dx} = M > 0.$ Then
\begin{equation}\label{D:nint}
 \|h \|_{L^2(\Omega)}^2 \leqslant 6^{\tfrac{2}{3}}M^{\tfrac{4}{3}}
\biggl(\int\limits_{\Omega} {h^2_{x} \,dx}\biggr)^{\tfrac{1}{3}} +
\tfrac{M ^{2}}{|\Omega|}.
\end{equation}
\end{lemma}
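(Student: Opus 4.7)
The bound has the structure of a one-dimensional Gagliardo--Nirenberg inequality applied to the fluctuation $h - \bar h$, where $\bar h = M/|\Omega|$ is the mean of $h$. The plan is to split $h$ into mean plus fluctuation, exploit that the fluctuation has a zero (by the intermediate value theorem), and then chain Cauchy--Schwarz and H\"older estimates.

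\medskip

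\noindent\emph{Step 1 (decomposition).} Let $\bar h = M/|\Omega|$ and set $v(x) = h(x) - \bar h$, so $\int_\Omega v\,dx = 0$ and $v_x = h_x$. Direct expansion of the square gives
\[
 \|h\|_{L^2(\Omega)}^2 \;=\; \|v\|_{L^2(\Omega)}^2 \;+\; 2\bar h\int_\Omega v\,dx \;+\; |\Omega|\bar h^{\,2} \;=\; \|v\|_{L^2(\Omega)}^2 \;+\; \frac{M^2}{|\Omega|},
\]
so it suffices to establish a Gagliardo--Nirenberg-type estimate
$\|v\|_{L^2}^2 \leq 6^{2/3}\,M^{4/3}\,\|v_x\|_{L^2}^{2/3}$.

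\medskip

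\noindent\emph{Step 2 (vanishing point and sup estimate).} In one dimension $H^1(\Omega)\hookrightarrow C(\overline{\Omega})$, so $v$ is continuous. Since $\int v = 0$ either $v\equiv 0$ (trivial case) or $v$ takes both signs and the intermediate value theorem produces $x_0\in\Omega$ with $v(x_0)=0$. Then
\[
 v(x)^2 \;=\; v(x_0)^2 + 2\int_{x_0}^{x}\! v\,v_x\,ds \;\leq\; 2\int_\Omega |v\,v_x|\,dx \;\leq\; 2\,\|v\|_{L^2}\|v_x\|_{L^2}
\]
by Cauchy--Schwarz, so $\|v\|_{L^\infty}^2 \leq 2\,\|v\|_{L^2}\|v_x\|_{L^2}$.

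\medskip

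\noindent\emph{Step 3 (H\"older and an $L^1$ bound on $v$).} By H\"older, $\|v\|_{L^2}^2 \leq \|v\|_{L^\infty}\,\|v\|_{L^1}$. Since $\int v = 0$ we have $\int v^+ = \int v^- = \tfrac12 \|v\|_{L^1}$, and $v^+=(h-\bar h)^+ \leq h$ gives $\int v^+ \leq \int h = M$; hence $\|v\|_{L^1}\leq 2M$.

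\medskip

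\noindent\emph{Step 4 (combine).} Squaring the H\"older estimate and inserting the sup bound,
\[
 \|v\|_{L^2}^4 \;\leq\; \|v\|_{L^\infty}^2\,\|v\|_{L^1}^2 \;\leq\; 2\|v\|_{L^2}\|v_x\|_{L^2}\cdot(2M)^2 \;=\; 8\,M^2\,\|v\|_{L^2}\|v_x\|_{L^2},
\]
so $\|v\|_{L^2}^3 \leq 8 M^2 \|v_x\|_{L^2}$, i.e.\ $\|v\|_{L^2}^2 \leq 8^{2/3}M^{4/3}\|v_x\|_{L^2}^{2/3}$. Plugging this into Step~1 gives a bound of the form claimed. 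A small sharpening of Step~2 on the periodic domain (using that $v$ vanishes on both sides of every point, so one may average the two FTC identities and lose a factor of $2$) yields $\|v\|_{L^\infty}^2 \leq \|v\|_{L^2}\|v_x\|_{L^2}$, which tightens the final constant to the stated $6^{2/3}$ (and indeed a bit below).

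\medskip

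\noindent\emph{Main obstacle.} None of the analytic ingredients is delicate; the only care is in bookkeeping the constant. The two nontrivial observations are that a mean-zero continuous function on an interval must vanish somewhere (needed to close the FTC argument without a $|\Omega|$-dependent tail), and that the trivial bound $\|v\|_{L^1}\le 2M$ obtained from $\int v^+ = \int v^- \le M$ is the right substitute for the missing $L^1$ control on $h$ alone. Everything else is a standard chain Cauchy--Schwarz~$\to$~H\"older~$\to$~Young.
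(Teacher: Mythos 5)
Your decomposition $h=\bar h+v$ and the observation $\|v\|_{L^1}\leq 2M$ match the paper exactly. Where you diverge is in how you get from there to the stated constant: the paper invokes its interpolation inequality~(\ref{Lady_ineq}) (from Ladyzhenskaya et al.) with $p=2$, $r=1$, $a=1/3$, which hands over $\|v\|_2^2\leq(3/2)^{2/3}\|v_x\|_2^{2/3}\|v\|_1^{4/3}$ in one line, and then $(3/2)^{2/3}(2)^{4/3}=6^{2/3}$ closes the proof. You instead rebuild a Gagliardo--Nirenberg inequality from scratch: Agmon-type sup bound via FTC at a zero of $v$, then H\"older. This is a legitimate and instructive elementary route, but as written your Steps~2--4 yield $\|v\|_2^2\leq 8^{2/3}M^{4/3}\|v_x\|_2^{2/3}$, and $8^{2/3}=4>6^{2/3}\approx 3.30$, so the chain falls short of the stated constant. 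You noticed this and proposed replacing $\|v\|_\infty^2\leq 2\|v\|_2\|v_x\|_2$ by $\|v\|_\infty^2\leq\|v\|_2\|v_x\|_2$ via the ``FTC from both sides'' trick.

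That sharpening is where the genuine gap lies: the factor-of-one Agmon bound requires being able to reach the max point from the zero $x_0$ along \emph{two} disjoint arcs whose union is $\Omega$, i.e.\ it requires the domain to be a circle (or $v$ to vanish at both endpoints). Neither Lemma~\ref{Marinas_bound} nor the paper's~(\ref{Lady_ineq}) states a periodicity hypothesis; they are formulated for arbitrary zero-mean $H^1(\Omega)$ functions. On a non-periodic interval with a single interior zero of $v$, the factor $2$ is unavoidable by your method and you only prove the weaker constant $4$. So strictly speaking you have not proved the Lemma as stated; you have proved it with constant $4$, and separately proved it with the (strictly better) constant $4^{2/3}\approx 2.52$ under an additional periodicity hypothesis. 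Since the lemma is in fact only applied to periodic solutions in the paper, your sharpened version suffices for everything downstream, but if you want to match the stated constant without invoking periodicity, the clean path is exactly the paper's: cite the interpolation inequality~(\ref{Lady_ineq}) with its built-in constant $(3/2)^{2/3}$ rather than re-deriving a (looser) version of it from Cauchy--Schwarz.
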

Note that by taking $h$ to be a constant function, one finds
that the constant $M^2/|\Omega|$ in (\ref{D:nint}) is sharp.

\begin{proof}

Let $v = h - M/|\Omega|$.  By (\ref{Lady_ineq}),
$$
\| v \|_{L^2(\Omega)}^2 \leqslant (\tfrac{3}{2})^{\tfrac{2}{3}}
\biggl(\int\limits_{\Omega} {v^2_{x} \,dx}\biggr)^{\tfrac{1}{3}}
\biggl(\int\limits_{\Omega} {|v| \,dx}\biggr)^{\tfrac{4}{3}}.
$$
\begin{multline*}
\mbox{Hence,} \quad \| h \|_{L^2(\Omega)}^2 \leqslant
(\tfrac{3}{2})^{\tfrac{2}{3}} \biggl(\int\limits_{\Omega} {h^2_{x}
\,dx}\biggr)^{\tfrac{1}{3}} \biggl(\int\limits_{\Omega} {\left|h -
\tfrac{M}{|\Omega|}\right|
\,dx}\biggr)^{\tfrac{4}{3}} +  \tfrac{M^2}{|\Omega|}\leqslant \quad \\
(\tfrac{3}{2})^{\tfrac{2}{3}} \biggl(\int\limits_{\Omega} {h^2_{x}
\,dx}\biggr)^{\tfrac{1}{3}} (2M)^{\tfrac{4}{3}} +
\tfrac{M^2}{|\Omega|}.
\end{multline*}
\end{proof}

Lemma \ref{Marinas_bound}
and the bound (\ref{C:d2'}) are used to prove $H^1$ control of the
generalized weak solution constructed in Theorem \ref{C:Th1}.

\begin{lemma}\label{G:Th1}
Let $h$ be the generalized solution of Theorem~\ref{C:Th1}. Then
\begin{equation}\label{G:1}
\tfrac{a_0}{4} \; \| h(\cdot,T_{loc}) \|_{H^1(\Omega)}^2
\leq
\mathcal{E}_0(0) + K T_{loc} + K_3
\end{equation}
where $\mathcal{E}_0(0)$ is defined in
(\ref{Energy}), $M = \int h_0$,
$K = |a_2 a_3| \| w' \|_\infty C$ and
$$
K_3 =
\begin{cases}
|a_2| \| w \|_\infty M & \mbox{ if } a_0 + a_1 \leq 0, \\
|a_2| \| w \|_\infty M +  M^2 \,\left( \frac{2
\sqrt{6}\,(a_0+a_1)^{3/2}}{3\sqrt{a_0}} + \tfrac{a_0 +
a_1}{2|\Omega|} \right) & \mbox{ otherwise}.
\end{cases}
$$
\end{lemma}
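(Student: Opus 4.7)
The plan is to start from the energy inequality (\ref{C:d2'}) established in Theorem~\ref{C:Th1}. Dropping the nonnegative dissipation integral and unpacking the definition (\ref{Energy}) of $\mathcal{E}_0$ yields, at time $t = T_{loc}$,
\begin{equation*}
\tfrac{a_0}{2}\int_\Omega h_x^2\,dx \leq \mathcal{E}_0(0) + K T_{loc} + \tfrac{a_1}{2}\int_\Omega h^2\,dx + a_2 \int_\Omega w\,h\,dx.
\end{equation*}
The convection term on the right is handled immediately using mass conservation: integrating (\ref{B:1}) over the periodic domain $\Omega$ kills all spatial-derivative contributions, so $\int h(\cdot,t)\,dx = \int h_0\,dx = M$, and since $h \geq 0$ by Theorem~\ref{C:Th1}, $\bigl|a_2\int wh\bigr| \leq |a_2|\|w\|_\infty M$.

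To promote the left-hand side to a full $H^1$ norm, I add $\tfrac{a_0+a_1}{2}\int h^2\,dx$ to both sides, so that the left-hand side becomes $\tfrac{a_0}{2}\|h\|_{H^1(\Omega)}^2$. The resulting inequality is
\begin{equation*}
\tfrac{a_0}{2}\|h\|_{H^1(\Omega)}^2 \leq \mathcal{E}_0(0) + K T_{loc} + |a_2|\|w\|_\infty M + \tfrac{a_0+a_1}{2}\int_\Omega h^2\,dx,
\end{equation*}
and the remaining task is to control the final term by constants depending only on $M$, $|\Omega|$, $a_0$, $a_1$ (plus a piece absorbable into $\tfrac{a_0}{4}\int h_x^2$).

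Split on the sign of $a_0+a_1$. When $a_0+a_1 \leq 0$, the added term is nonpositive and may be discarded, giving the bound with $K_3 = |a_2|\|w\|_\infty M$ (in fact with coefficient $\tfrac{a_0}{2}$, which is stronger than the stated $\tfrac{a_0}{4}$). When $a_0+a_1 > 0$, apply Lemma~\ref{Marinas_bound} to write
\begin{equation*}
\tfrac{a_0+a_1}{2}\int_\Omega h^2\,dx \leq \tfrac{(a_0+a_1)\,6^{2/3} M^{4/3}}{2}\Bigl(\int_\Omega h_x^2\,dx\Bigr)^{\!1/3} + \tfrac{(a_0+a_1)M^2}{2|\Omega|},
\end{equation*}
and then apply Young's inequality with conjugate exponents $(3,3/2)$ to the first summand, choosing the scaling parameter so as to extract exactly $\tfrac{a_0}{4}\int h_x^2$. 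Moving that quantity back to the left-hand side leaves at least $\tfrac{a_0}{4}\|h\|_{H^1}^2$ on the left and the stated $K_3$ on the right.

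The main obstacle is purely arithmetic bookkeeping in the Young step: one must pick the scaling so that Young delivers $\tfrac{a_0}{4}\int h_x^2$ with the remainder equal to $\tfrac{2\sqrt{6}(a_0+a_1)^{3/2}}{3\sqrt{a_0}}M^2$. A direct computation with scaling parameter $\lambda = 3a_0/4$, together with the identities $(6^{2/3})^{3/2} = 6$ and $4/\sqrt{6} = 2\sqrt{6}/3$, yields precisely this constant; combined with the constant term $\tfrac{(a_0+a_1)M^2}{2|\Omega|}$ from Lemma~\ref{Marinas_bound} and the convection contribution $|a_2|\|w\|_\infty M$, it reproduces the $K_3$ given in the statement.
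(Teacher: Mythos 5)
Your proposal is correct and follows the paper's own argument essentially exactly: start from (\ref{C:d2'}), unpack $\mathcal{E}_0$, use $h\geq 0$ and mass conservation to bound $a_2\int wh$ by $|a_2|\|w\|_\infty M$, complete the $H^1$ norm, split on the sign of $a_0+a_1$, and in the $a_0+a_1>0$ case combine Lemma~\ref{Marinas_bound} with Young's inequality to absorb $\tfrac{a_0}{4}\int h_x^2$ onto the left, producing precisely the stated $K_3$. One small wording slip: the quantity actually added to both sides is $\tfrac{a_0}{2}\int h^2$, not $\tfrac{a_0+a_1}{2}\int h^2$ --- the $\tfrac{a_1}{2}\int h^2$ already present on the right then combines with it to give the $\tfrac{a_0+a_1}{2}\int h^2$ term, so the displayed inequality that follows (and everything after it) is correct.
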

Note that if the evolution is missing either linear or nonlinear
advection ($a_2 = 0$ or $w' = 0$ or $a_3=0$) then Lemma \ref{G:Th1}
provides a uniform-in-time upper bound for
$\|h(\cdot,T_{loc})\|_{H^1}$.

For the equation (\ref{A:PukhEq}) which models the flow of a thin
film of liquid on the outside of a rotating cylinder one has $a_0 =
a_1 = \tfrac{\chi}{3}$, $a_2 = - \tfrac{\mu}{3}$, $a_3 = 1$, $w(x) =
\sin x$, and $|\Omega|=2\pi$. In this case, the $H^1$ bound
(\ref{G:1}) becomes
$$
\tfrac{\chi}{12} \|  h(.,T_{loc}) \|^2_{H^1(\Omega)} \leqslant
\mathcal{E}_0(0) + \tfrac{\mu}{3} C T_{loc} + \tfrac{\mu}{3} M + M^2
\left( \tfrac{8}{3} \sqrt{\chi}  + \tfrac{\chi}{6 \pi} \right)
$$
where $2 \mathcal{E}_0(0) = \int (\chi/3 \: (h_{0,x}^2 - h_0^2) +
2\mu/3 \: \sin(x) \: h_0 \;) dx$. The $H^1$ bound (\ref{G:1})
actually holds true for all times for which $h$ is strictly
positive. Recalling the definition (\ref{params}) of $\chi$, one
sees that the $H^1$ control is lost as $\chi \to 0$ (i.e. as
$\sigma/(\nu \rho R \omega) \to 0$), for example, in the zero
surface tension limit.

\begin{proof}
By (\ref{Energy}),
$$
\tfrac{a_0}{2} \int\limits_\Omega h_x^2(x,T) \; dx =
\mathcal{E}_0(T) + \tfrac{a_1}{2} \int\limits_\Omega h^2(x,T) \; dx
+ a_2 \int\limits_\Omega h(x,T) \, w(x) \; dx.
$$
The linear--in--time bound
(\ref{C:d2'}) on $\mathcal{E}_0(T_{loc})$
then implies
\begin{equation}
\tfrac{a_0}{2} \| h(\cdot,T_{loc}) \|_{H^1}^2
\leq
\mathcal{E}_0(0) +  K \,
 T_{loc} + \tfrac{a_0+a_1}{2} \int\limits_\Omega h^2
\; dx + |a_2| \| w \|_\infty M.
\label{general_bound}
\end{equation}
with $K = |a_2 a_3| \| w' \|_\infty C$.

\noindent
{\it \underline{Case 1: $a_0 + a_1 \leq 0 \;$}}
The third term on the right-hand side of
(\ref{general_bound}) is nonpositive and can be removed.  The desired
bound (\ref{G:1}) follows immediately. \\

\noindent {\it \underline{Case 2: $a_0 + a_1 > 0 \;$}} By Lemma
\ref{Marinas_bound} and Young's inequality
\begin{align} \notag
& \tfrac{a_0+a_1}{2} \int\limits_\Omega h^2 \; dx \leq
\tfrac{a_0+a_1}{2} \left( 6^{\tfrac{2}{3}}M^{\tfrac{4}{3}}
\biggl(\int\limits_{\Omega} {h^2_{x} \,dx}\biggr)^{\tfrac{1}{3}} +
\tfrac{M ^{2}}{|\Omega|} \right) \\
& \hspace{.2in} \leq \tfrac{a_0}{4} \int\limits_\Omega
h_x^2(x,T_{loc}) \; dx + M^2\,
\left(\tfrac{2\sqrt{6}(a_0+a_1)^{3/2}}{3\sqrt{a_0}} +
\tfrac{a_0+a_1}{2|\Omega|} \right). \label{young_marina}
\end{align}
Using this in (\ref{general_bound}), the desired bound (\ref{G:1})
follows immediately.
\end{proof}

This $H^1$ control in time of the generalized solution is now used
to extend the short--time existence result of Theorem \ref{C:Th1}
to a long--time existence result:
\begin{theorem} \label{global}
Let $T_g$ be an arbitrary positive finite number. The generalized
weak solution $h$ of Theorem \ref{C:Th1} can be continued in time
from $[0,T_{loc}]$ to $[0,T_g]$ in such a way that $h$ is also a
generalized weak solution and satisfies all the bounds of Theorem
\ref{C:Th1} (with $T_{loc}$ replaced by $T_g$).
\end{theorem}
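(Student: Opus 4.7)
The strategy is a continuation argument: iterate Theorem~\ref{C:Th1}, restarting from $h(\cdot,T_{loc})$, and use Lemma~\ref{G:Th1} to prevent the time of existence from shrinking as we iterate. The preliminary step is to extract from the proof of Theorem~\ref{C:Th1} (and of Lemma~\ref{MainAE}) the qualitative dependence of $T_{loc}$ on the initial data. Since the relevant a priori bounds depend on $h_0$ monotonically through $\|h_0\|_{H^1}$ (which in one dimension also controls $\|h_0\|_\infty$) and through $\int_\Omega 1/h_0\,dx$, there exists a nonincreasing function $\tau:(0,\infty)^2\to(0,\infty)$ such that any admissible datum with $\|h_0\|_{H^1}\leq R$ and $\int 1/h_0\,dx\leq E$ yields a generalized weak solution on at least $[0,\tau(R,E)]$.

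Next, fix $T_g>0$ and let $T^{*}$ be the supremum of times $T\leq T_g$ for which the solution extends to a generalized weak solution on $[0,T]$ satisfying the conclusions of Theorem~\ref{C:Th1}. We argue $T^{*}\geq T_g$ by contradiction; suppose $T^{*}<T_g$. The proof of Lemma~\ref{G:Th1} only uses the energy inequality (\ref{C:d2'}), which holds with $T_{loc}$ replaced by any $T<T^{*}$, so we obtain the uniform $H^1$ bound
\[
\tfrac{a_0}{4}\,\|h(\cdot,T)\|_{H^1}^2\;\leq\;\mathcal{E}_0(0)+K\,T_g+K_3\;=:\;\tfrac{a_0}{4}\,R_g^{2}\qquad\text{for all }T<T^{*}.
\]
Passing the entropy estimate (\ref{BF_entropy}) through the $\delta,\eps\to 0$ limit via Fatou's lemma yields $\int_\Omega 1/h(\cdot,T)\,dx\leq K_2=:E_g$ uniformly on $[0,T^{*})$. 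Combined with $h\in C([0,T^{*});L^{2}(\Omega))$, weak-$\ast$ compactness in $H^{1}$ produces a limiting trace $h(\cdot,T^{*})$ that inherits nonnegativity, the periodic boundary conditions, and the bounds $\|h(\cdot,T^{*})\|_{H^{1}}\leq R_g$ and $\int 1/h(\cdot,T^{*})\,dx\leq E_g$ by weak lower semicontinuity.

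Now apply Theorem~\ref{C:Th1} with initial datum $h(\cdot,T^{*})$ to obtain a generalized weak solution $\tilde h$ on $[T^{*},T^{*}+\tau(R_g,E_g)]$. Concatenation with $h$ produces a function on $[0,T^{*}+\tau(R_g,E_g)]$; one checks that the weak identity (\ref{integral_form}) splits over the two subintervals and so holds on the union, and that the remaining regularity properties of Definition~\ref{B:defweak} combine similarly. This contradicts the maximality of $T^{*}$. Because $\tau(R_g,E_g)$ depends only on $T_g$ and $h_0$ --- not on $T^{*}$ --- the same $\tau$ governs every iteration, so finitely many iterations cover $[0,T_g]$.

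The main obstacle is the first step: carefully tracking how every constant entering the short-time estimates of Lemma~\ref{MainAE} depends on the data, to verify that $T_{loc}$ is bounded below by some $\tau(R,E)>0$ depending only on $\|h_0\|_{H^1}$ and $\int 1/h_0\,dx$. A secondary, more routine issue is justifying the weak continuity at $t=T^{*}$ and checking that the concatenated function satisfies Definition~\ref{B:defweak} globally; this is standard once the two pieces share a common boundary trace.
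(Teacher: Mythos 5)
Your overall strategy—iterate the local theory, using the $H^1$ bound from Lemma~\ref{G:Th1} to prevent the time interval from collapsing—is the same as the paper's, and your treatment of the $H^1$ part is essentially the paper's bound (\ref{D:here}). However, there is a genuine gap in the treatment of the entropy.

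You assert that ``passing the entropy estimate (\ref{BF_entropy}) through the $\delta,\eps\to 0$ limit via Fatou's lemma yields $\int_\Omega 1/h(\cdot,T)\,dx\leq K_2=:E_g$ uniformly on $[0,T^{*})$.'' This is not justified. The bound (\ref{BF_entropy}) holds only on $[0,T_{loc}]$, and its constant $K_2$ is built, via (\ref{D:aa2}), from the initial entropy $\int G_{\delta\eps}(h_{0,\delta\eps})\,dx$. Once you restart the solution from $h(\cdot,T_n)$, the new entropy constant on $[T_n,T_{n+1}]$ is built from $\int G_0(h(\cdot,T_n))\,dx$, which you have not controlled. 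Since the local existence time $T_{loc}$ in (\ref{Tloc_is}) shrinks as the entropy grows (the bound (\ref{Tloc_eps_is}) involves $\int\bigl(h_x^2 + 2\tfrac{c_3}{a_0}G_{\delta\eps}(h)\bigr)\,dx$), an unbounded entropy sequence could force $T_{n,loc}\to 0$ with $\sum T_{n,loc}<\infty$, and your argument would not rule this out. This is precisely the possibility the paper must exclude, and it does so by the explicit iteration (\ref{D:2})--(\ref{D:4a}): the a priori estimate (\ref{D:aa2}) shows the entropy increment over each subinterval is controlled by $\int h_{\eps,x}^2\,dx$, which in turn is controlled linearly in time by (\ref{D:here}), giving a bound on $\int G_0(h(\cdot,T_n))\,dx$ that grows at most linearly in $T_n$ and hence stays finite as $T_n\to T^{*}$. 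That accounting is the content of the proof; a direct appeal to Fatou on (\ref{BF_entropy}) cannot replace it because (\ref{BF_entropy}) has no uniform-in-$n$ validity beyond the first interval.

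A secondary, smaller point: your initial claim that $T_{loc}$ can be bounded below by a function of $\|h_0\|_{H^1}$ and $\int 1/h_0\,dx$ alone is correct in spirit (the paper's $T_{\delta\eps,loc}$ in (\ref{Tloc_eps_is}) depends on $\int h_{0,x}^2 + 2\tfrac{c_3}{a_0}G_{\delta\eps}(h_0)\,dx$), but you should state explicitly that this is exactly why controlling both quantities at the restart times is necessary; conflating the two bounds hides where the argument is nontrivial.
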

Similarly, the short--time existence of strong solutions (see Theorem
\ref{C:Th1.3}) can be extended to a long--time existence.

\begin{proof}  To construct a weak solution up to time $T_g$, one applies
the local existence theory iteratively, taking the solution at the final time of
the current time interval as initial data for the next time interval.

Introduce the times
\begin{equation} \label{D:time_ints}
0 = T_0 < T_1 < T_2 < \dots < T_N < \dots \quad \mbox{where} \quad
T_N := \sum_{n=0}^{N-1} T_{n,loc}
\end{equation}
and $T_{n,loc}$ is the interval of existence (\ref{Tloc_is})
for a solution with initial data $h(\cdot,T_n)$:
\begin{equation} \label{D:Tloc_is}
T_{n,loc} := \tfrac{9}{40 c_6} \min\left\{ 1,   \left(
\int\limits_\Omega h_x^2(x,T_n) + 2 \tfrac{c_3}{a_0} G_0(h(x,T_n))
\; dx\right)^{-2} \right\}.
\end{equation}

The proof proceeds by contradiction.  Assume there
exists initial data $h_0$, satisfying the hypotheses of Theorem \ref{C:Th1},
that results in a
weak solution that cannot be extended arbitrarily in time:
$$
\sum_{k=0}^{\infty} T_{n,loc} = T^* < \infty
\quad
\Longrightarrow \quad \lim_{n \to \infty} T_{n,loc} = 0.
$$
From the definition (\ref{D:Tloc_is}) of $T_{n,loc}$, this implies
\begin{equation} \label{D:diverges}
\lim_{n \to \infty}
 \int\limits_\Omega (h_x^2(x,T_n)
+ 2 \tfrac{c_3}{a_0} G_0(h(x,T_n))) \; dx = \infty.
\end{equation}
%
$$
\mbox{By (\ref{G:1}) and  (\ref{C:d2'}),}\qquad\tfrac{a_0}{4}
\int\limits_\Omega h_x^2(x,T_n) \; dx \leq \mathcal{E}_0(T_{n-1}) +
K \, T_{n-1,loc} + K_3.
$$
$$
\mathcal{E}_0(T_{n-1}) \leq \mathcal{E}_0(T_{n-2}) + K \;
T_{n-2,loc}.
$$
Combining these,
$$
\tfrac{a_0}{4} \int\limits_\Omega h_x^2(x,T_n) \; dx
\leq
\mathcal{E}_0(T_{n-2})  + K  \left( T_{n-2,loc} + T_{n-1,loc} \right)
 + K_3 .
$$
\begin{equation}
\mbox{Continuing in this way,} \qquad \tfrac{a_0}{4}
\int\limits_\Omega h_x^2(x,T_n) \; dx \label{D:here} \leq
\mathcal{E}_0(0)   + K  \, T_n + K_3.
\end{equation}
By assumption, $T_n \to T^* < \infty$ as $n \to \infty$ hence $\int
h_x^2(x,T_n)\,dx$ remains bounded. Assumption (\ref{D:diverges})
then implies that $\int G_0(h(x,T_n))\,dx  \to \infty$ as $n \to
\infty$.

To continue, return to the approximate solutions $h_\eps$. By
(\ref{D:aa2}),
\begin{align} \label{D:2}
& \int\limits_\Omega G_\eps(h_\eps(x,T_{n,\eps})) \; dx
\leq \int\limits_\Omega G_\eps(h_\eps(x,T_{n-1,\eps})) \; dx \\
& \hspace{2in} \notag + c_5 \int\limits_{T_{n-1,\eps}}^{T_{n,\eps}}
\max \left\{ 1, \int\limits_\Omega h_{\eps,x}^2(x,T) \; dx \right\}
\; dT
\end{align}
Using (\ref{D:d2}), one proves the analogue of (\ref{G:1}) for all
$T \in [0,T_{loc,\eps}]$ and then the analogue of (\ref{D:here}) for
all $T \in [0,T_{n,\eps}]$.  Using this bound,
\begin{align}
\notag & \int\limits_{T_{n-1,\eps}}^{T_{n,\eps}} \int\limits_\Omega
h_{\eps,x}^2(x,T) \; dx dT
\leq
\tfrac{4}{a_0} \int\limits_{T_{n-1,\eps}}^{T_{n,\eps}} \mathcal{E}_\eps(0) + K \, T + K_3 \; dT \\
& \hspace{1in} \label{D:3} = \tfrac{4}{a_0} \left[
\mathcal{E}_\eps(0) + K_3 + \tfrac{K}{2} \left( T_{n-1,\eps} +
T_{n,\eps} \right) \right] \, T_{n-1,loc,\eps}.
\end{align}
Replacing $K_3$ by a larger value if necessary and using (\ref{D:3})
in (\ref{D:2}),
\begin{align} \label{D:4}
& \int\limits_\Omega G_\eps(h_\eps(x,T_{n,\eps})) \; dx \\
& \hspace{.5in} \notag \leq \int\limits_\Omega
G_\eps(h_\eps(x,T_{n-1,\eps})) \; dx + \left( \alpha + \beta
(T_{n-1,\eps} + T_{n,\eps}) \right) \, T_{n-1,loc,\eps}
\end{align}
for some $\alpha$ and $\beta$ which are fixed values that depend on
$|\Omega|$, the coefficients of the PDE,  and (possibly) on the
initial data $h_{0,\eps}$.
Taking $\eps_k \to 0$ in the sequence $\{ \eps_k \}$ that was used
to construct $h$ yields
\begin{equation}
\label{D:4a}
\int\limits_\Omega G_0(h(x,T_{n}))dx
\leq \int\limits_\Omega G_0(h(x,T_{n-1}))dx
+ \left( \alpha + \beta (T_{n-1} + T_{n})\right) \, T_{n-1,loc}.
\end{equation}
Applying (\ref{D:4a}) iteratively and using that $T_k < T^*$,
\begin{equation}
 \int\limits_\Omega G_0(h(x,T_n)) \; dx
 \leq
 \int\limits_\Omega G_0(h_0(x)) \; dx
 + \left( \alpha + \beta \, 2 \, T^*\right) T_n.
\end{equation}
Hence $\int G_0(h(x,T_n))dx < \infty$ as $n \to \infty$, finishing
the proof.
\end{proof}

Under certain conditions, a bound closely related to (\ref{G:1})
implies that if the solution of Theorem \ref{C:Th1} is initially
constant then it will remain constant for all time:
\begin{theorem}\label{constancy}
Assume the coefficients $a_1$ and $a_2$ in (\ref{A:mainEq}) satisfy
$a_1 \geq 0$, $a_2=0$ and $| \Omega | < 4 a_0/|a_1|$.  If the
initial data is constant, $h_0 \equiv C > 0$, then the solution of
Theorem \ref{C:Th1} satisfies $h(x,t) = C$ for all $x \in
\bar{\Omega}$ and all $t > 0$.
\end{theorem}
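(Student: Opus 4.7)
The plan is to combine the energy inequality (\ref{C:d2'}) with conservation of mass and a Poincar\'e--Wirtinger inequality in order to force $h_x \equiv 0$. With $a_2 = 0$ the function $h(x,t) \equiv C$ is trivially a classical solution of (\ref{A:mainEq}) (all spatial derivatives vanish and $a_3 h_x = 0$), so the theorem is in substance a uniqueness-type statement: the particular weak solution produced by Theorem~\ref{C:Th1} starting from constant data must coincide with this constant solution. The argument will proceed by squeezing the energy inequality from above using $\mathcal{E}_0(0)$ and from below using mass conservation plus a Poincar\'e bound.

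First I would verify mass conservation $\int_\Omega h(\cdot,T)\,dx = C|\Omega|$ by testing (\ref{integral_form}) against a constant (or a smooth periodic approximation), noting that every flux term disappears thanks to the periodic boundary conditions. Next, since $a_2 = 0$ one has $K = |a_2 a_3|\,\|w'\|_\infty\,C = 0$ in (\ref{C:d2'}), and for $h_0 \equiv C$ one has $\mathcal{E}_0(0) = -a_1 C^2 |\Omega|/2$, so (\ref{C:d2'}) reads
\begin{equation*}
\tfrac{a_0}{2}\!\int\! h_x^2(x,T)\,dx - \tfrac{a_1}{2}\!\int\! h^2(x,T)\,dx + \!\!\iint_{\{h>0\}}\!\! h^3(a_0 h_{xxx} + a_1 h_x)^2\,dx\,dt \leq -\tfrac{a_1 C^2 |\Omega|}{2}.
\end{equation*}
Writing $h(x,T) = C + \tilde h(x,T)$ with $\int_\Omega \tilde h\,dx = 0$, one has $\int h^2 = C^2 |\Omega| + \int \tilde h^2$ and $h_x = \tilde h_x$, so the inequality collapses to
\begin{equation*}
\tfrac{a_0}{2}\int_\Omega \tilde h_x^2(x,T)\,dx - \tfrac{a_1}{2}\int_\Omega \tilde h^2(x,T)\,dx + (\text{nonneg.\ dissipation}) \leq 0.
\end{equation*}

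Finally I would apply the Poincar\'e--Wirtinger inequality for periodic mean-zero functions on $\Omega$ in whichever sharp form matches the threshold in the hypothesis (for instance the elementary bound $\|\tilde h\|_\infty^2 \leq \tfrac{|\Omega|}{4}\|\tilde h_x\|_2^2$, which yields $\int \tilde h^2 \leq \tfrac{|\Omega|^2}{4}\int \tilde h_x^2$). The hypothesis $|\Omega| < 4 a_0/|a_1|$ then converts the previous display into $(a_0 - a_1\,c(|\Omega|))\int \tilde h_x^2 \leq 0$ with strictly positive coefficient, forcing $\tilde h_x(\cdot,T)\equiv 0$ for a.e.\ $T$; combined with the zero-mean constraint this gives $\tilde h \equiv 0$, hence $h(x,t) \equiv C$, and Theorem~\ref{global} extends the conclusion to all $t > 0$. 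The main obstacle is aligning the exact Poincar\'e constant with the stated threshold $4 a_0/|a_1|$ so that the coefficient is strictly positive under the hypothesis; a secondary technical point is to justify that constant test functions (or their smooth periodic approximants) are admissible in the generalized weak formulation (\ref{integral_form}), so that mass conservation can be invoked rigorously.
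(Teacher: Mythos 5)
Your proposal follows essentially the same route as the paper: apply the energy inequality with $a_2=0$ (so $K=0$), use mass conservation to isolate the mean-zero part, and invoke Poincar\'e to extract strict positivity of the coefficient of $\int h_x^2$.  The one structural difference is that the paper carries out the entire calculation at the level of the regularized classical solutions $h_\eps$, using the $\eps$-version of the energy bound (\ref{D:d2}) and the Poincar\'e inequality (\ref{Poincare}) applied to $v_\eps = h_\eps - M_\eps/|\Omega|$, then passes to the limit $\eps_k \to 0$.  This sidesteps exactly the two technical concerns you raise at the end: mass conservation for $h_\eps$ is immediate because it is a positive classical solution with periodic boundary conditions, and the inequality holds for every $T \in [0,T_{\eps,loc}]$ (not just at the terminal time), which is needed to conclude $h_\eps \equiv C_\eps$ on the whole cylinder via continuity.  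Your version, working with the limit $h$ and (\ref{C:d2'}) directly, is cleaner but must verify that constant test functions are admissible in (\ref{integral_form}) and that the bound holds uniformly in $T$; those are straightforward but real gaps that the paper's approach avoids.

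On the point you flag as the ``main obstacle,'' you are right to be uneasy.  The Poincar\'e inequality (\ref{Poincare}) at $p=2$ gives $\|v\|_2^2 \leq \tfrac{|\Omega|^2}{4}\|v_x\|_2^2$, so the coefficient that emerges is $\tfrac{a_0}{2} - \tfrac{|a_1|\,|\Omega|^2}{8}$, exactly as in the paper's displayed inequality.  Positivity of that coefficient requires $|\Omega|^2 < 4a_0/|a_1|$, i.e. $|\Omega| < 2\sqrt{a_0/|a_1|}$, which is not implied by the stated hypothesis $|\Omega| < 4a_0/|a_1|$ unless $4a_0/|a_1| \leq 1$.  This mismatch is present in the paper's own proof (the displayed coefficient involves $|\Omega|^2$ but the stated condition involves only $|\Omega|$); it is an apparent typo in the theorem statement rather than a defect in your argument.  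If one replaces the hypothesis by $|\Omega|^2 < 4a_0/|a_1|$, both your proof and the paper's go through.
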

The hypotheses of Theorem \ref{constancy} correspond to the
equation is long--wave unstable ($a_1 > 0$), there is no nonlinear
advection ($a_2 = 0$), and the domain is not ``too large''.

\begin{proof}
Consider the approximate solution $h_\eps$.
The definition of $\mathcal{E}_\eps(T)$
combined with the linear-in-time bound  (\ref{D:d2}) implies
\begin{equation} \label{gen_bound2}
\tfrac{a_0}{2} \int\limits_\Omega h_{\eps,x}^2(x,T) \; dx \leq
\mathcal{E}_\eps(0) + K  \, T  + \tfrac{|a_1|}{2} \int\limits_\Omega h_\eps^2 \;
dx
+ | a_2 | \| w \|_\infty M_\eps
\end{equation}
where $M_\eps = \int h_{0,\eps}\,dx$. Applying Poincar\'e's
inequality (\ref{Poincare}) to $v_\eps = h_\eps - M_\eps/|\Omega|$
and using $\int h_\eps^2 \, dx= \int v_\eps^2 \, dx +
M_\eps^2/|\Omega|$ yields
$$
\left(
\tfrac{a_0}{2} - \tfrac{|a_1| \, |\Omega|^2}{8}
\right)
\int\limits_\Omega h_{\eps,x}^2(x,t) \; dx
\leq
\mathcal{E}_\eps(0)  + K \, T_{\eps,loc}  + \tfrac{|a_1| M_\eps^2}{2 |\Omega|}
+ | a_2 | \| w \|_\infty M_\eps.
$$
If $h_{0,\eps} \equiv C_\eps = C + \eps^\theta$ and $a_2 = 0$ (hence $K = 0$) this becomes
$$
\left(
\tfrac{a_0}{2} - \tfrac{|a_1| |\Omega|^2}{8}
\right)
\int\limits_\Omega h_{\eps,x}^2(x,T) \; dx
\leq (a_1 - |a_1|) \tfrac{C^2 |\Omega|}{2}.
$$
If $a_1 \geq 0$ and $|\Omega| < 4 a_0/a_1$ then $\int
h_{\eps,x}^2(x,T) \; dx = 0$ for all $T \in [0,T_{\eps,loc}]$ and
that this, combined with the continuity in space and time of
$h_\eps$, implies that $h_\eps \equiv C_\eps$ on $Q_{T_{\eps,loc}}$.
Taking the sequence $\{ \eps_k \}$ that yields convergence to the
solution $h$ of Theorem \ref{C:Th1}, $h \equiv C$ on $Q_{T_{loc}}$.

\end{proof}

\section{Strong positivity of solutions} \label{F}

\begin{proposition}\label{F:LemPos}
Assume the initial data $h_0$ satisfies
$h_0(x) > 0 $ for all
$x \in \omega \subseteq \Omega$ where $\omega$ is an open
interval.
Then the weak solution $h$ from Theorem~\ref{C:Th1}
satisfies:

1) $h(x,T) > 0$ for almost every $x \in \omega$, for all $T \in
[0,T_{loc}]$;

2) $h(x,T) > 0$ for all $x \in \omega$,  for almost every $T \in
[0,T_{loc}]$.
\end{proposition}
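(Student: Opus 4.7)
The plan is to prove both statements via a \emph{localized} Bernis--Friedman entropy, working at the level of the strictly positive classical approximations $h_\eps$ from Lemma \ref{preC:Th1} and passing $\eps\to 0$ along the subsequence $\{\eps_k\}$ constructed in the proof of Theorem \ref{C:Th1}. Fix a non-negative cutoff $\zeta\in C_c^\infty(\omega)$. Multiplying (\ref{D:1r'}) by $\zeta\,G_\eps'(h_\eps)$, integrating by parts, and using $G_\eps''=1/f_\eps$ from (\ref{D:reg2}) produces the identity
\begin{equation*}
\tfrac{d}{dT}\int_\Omega \zeta\,G_\eps(h_\eps)\,dx + a_0\int_\Omega \zeta\,h_{\eps,xx}^2\,dx = R_\eps(T),
\end{equation*}
where $R_\eps$ gathers the $a_1,a_2,a_3$ contributions together with every term in which a derivative lands on $\zeta$ rather than on $f_\eps G_\eps'$.

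The core step is the uniform-in-$\eps$ estimate $\int_0^{T_{loc}}|R_\eps|\,dt\le C_\zeta$. Every summand of $R_\eps$ carries at least one bounded factor $\zeta^{(k)}$; rewriting $a_3\zeta G_\eps'(h_\eps)h_{\eps,x}=a_3\zeta(G_\eps(h_\eps))_x$ and integrating by parts converts it into a bounded multiple of $\int G_\eps(h_\eps)\le K_2$ from (\ref{BF_entropy}), while the most singular piece $\int\zeta_x G_\eps'(h_\eps)f_\eps(h_\eps)h_{\eps,xxx}\,dx$ is handled via Cauchy--Schwarz using the elementary bound $(G_\eps'(z))^2 f_\eps(z)\lesssim 1/z$ near $z=0$ together with $\int 1/h_\eps\lesssim\int G_\eps(h_\eps)\le K_2$ and the dissipation $\iint f_\eps h_{\eps,xxx}^2\le K_1/a_0$ from (\ref{D:a13''}); the remaining lower-order pieces are standard. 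Absorbing a fraction of $a_0\int\zeta h_{\eps,xx}^2$ where necessary yields
\begin{equation*}
\int_\Omega \zeta\,G_\eps(h_\eps(x,T))\,dx \le \int_\Omega \zeta\,G_\eps(h_{0,\eps})\,dx + C_\zeta\,T\quad\forall\,T\in[0,T_{loc}].
\end{equation*}
Because $h_0>0$ on $\omega$ and $\supp\zeta\Subset\omega$, continuity of $h_0$ forces $h_0\ge c_\zeta>0$ on $\supp\zeta$, so $\int\zeta G_\eps(h_{0,\eps})\,dx$ is uniformly bounded in $\eps$. Passing $\eps_k\to 0$ via Fatou's lemma (on $\{h>0\}$, which has full measure by \cite[Theorem 4.1]{B8}) gives $\int_\Omega\zeta(x)\,G_0(h(x,T))\,dx<\infty$ for every $T\in[0,T_{loc}]$. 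Since $f(h)=|h|^3$ gives $G_0(z)\sim 1/(2z)$ as $z\to 0^+$, the zero set of $h(\cdot,T)$ in $\supp\zeta$ has Lebesgue measure zero; exhausting $\omega$ by a nested family of such $\zeta$ proves statement~1.

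For statement~2, leverage the extra regularity $h\in L^2(0,T_{loc};H^2(\Omega))$ from (\ref{Linf_H2}). For almost every $T$, $h(\cdot,T)\in H^2(\Omega)\hookrightarrow C^{3/2}(\overline{\Omega})$ by one-dimensional Sobolev embedding. Fix such a $T$ and suppose toward contradiction that $h(x_0,T)=0$ for some $x_0\in\omega$; nonnegativity together with $h(\cdot,T)\in C^1$ forces $h_x(x_0,T)=0$, and the $C^{1/2}$ Hölder estimate on $h_x$ integrates to $h(x,T)\le C|x-x_0|^{3/2}$ in a neighborhood of $x_0$. Then $1/h(x,T)\gtrsim |x-x_0|^{-3/2}$ fails to be integrable in one dimension, contradicting the local entropy bound above applied with $\zeta$ chosen so that $\zeta(x_0)>0$ and $\supp\zeta\Subset\omega$. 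Hence $h(\cdot,T)>0$ on all of $\omega$ for a.e.\ $T$.

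The main technical obstacle is the remainder estimate for $R_\eps$: localization pushes derivatives onto the singular factor $G_\eps'(h_\eps)$, and only the interplay between the dissipation $\int\zeta h_{\eps,xx}^2$, the global entropy bound (\ref{BF_entropy}), and the compact support of $\zeta$ inside $\omega$ (which suppresses boundary terms) makes the resulting singular integrals bounded uniformly in $\eps$.
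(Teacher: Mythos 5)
Your proposal follows the paper's outline exactly: derive a localized Bernis--Friedman entropy bound at the level of the positive approximants $h_\eps$ (the paper's Lemma~\ref{F:local_BF}, which uses $\xi=\zeta^4$), pass to the limit, and then deduce (1) and (2) by the same two contradiction arguments you give. One caveat on the remainder estimate: the claimed elementary bound $(G_\eps'(z))^2 f_\eps(z)\lesssim 1/z$ near $z=0$ is not uniform in $\eps$. With $G_\eps'(z)=-\tfrac{1}{2z^2}-\tfrac{\eps}{3z^3}$ and $f_\eps(z)=z^4/(z+\eps)$ one computes
\begin{equation*}
(G_\eps'(z))^2 f_\eps(z)=\frac{1}{4(z+\eps)}+\frac{\eps}{3z(z+\eps)}+\frac{\eps^2}{9z^2(z+\eps)},
\end{equation*}
and the last term behaves like $\eps/z^2$ when $z\lesssim\eps$, which cannot be dominated by $1/z$ uniformly. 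Replacing your bound with $(G_\eps'(z))^2 f_\eps(z)\lesssim G_\eps(z)+1$ still closes the Cauchy--Schwarz step against $\int G_\eps(h_\eps)\le K_2$ from (\ref{BF_entropy}); alternatively, the paper integrates $I_2$ by parts once more, absorbs the resulting $\zeta^4 h_{xx}^2$ into the $a_0\iint\xi h_{xx}^2$ dissipation, and uses only $|f_\eps G_\eps'|\le z/2$, $|f_\eps'G_\eps'|\le 2$. Your argument for statement~2 (noting $h_x(x_0,T)=0$ before invoking the H\"older estimate on $h_x$) is in fact slightly more careful than the paper's terse appeal to ``$C^{3/2}$''.
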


The proof of Proposition~\ref{F:LemPos} depends on a local version
of the a priori bound (\ref{BF_entropy}) of Lemma \ref{MainAE}:

\begin{lemma} \label{F:local_BF}  Let $\omega \subseteq \Omega$ be an open interval
and
$\zeta \in C^2(\bar{\Omega})$ such that
$\zeta > 0$ on $\omega$,
$\text{supp}\,\zeta = \overline{\omega}$,
and $(\zeta^4)^{\prime} = 0$ on
$\partial\Omega$. If $\omega = \Omega$, choose $\zeta$ such
that $ \zeta(-a) = \zeta(a)
> 0$. Let $\xi := \zeta^4$.

If the initial data $h_0$ and the
time $T_{loc}$ are as in Theorem \ref{C:Th1}
then for all $T \in [0,T_{loc}]$ the weak solution $h$
from Theorem \ref{C:Th1}
satisfies
\begin{equation} \label{local_BF_finite}
\int\limits_\Omega \xi(x) \; \tfrac{1}{h(x,T)} \; dx < \infty
\end{equation}

\end{lemma}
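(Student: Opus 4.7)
The plan is to localize the Bernis--Friedman entropy argument from Lemma \ref{MainAE}. I would work with the positive classical approximations $h_\eps$ of Lemma \ref{preC:Th1}, establish a uniform-in-$\eps$ bound on the weighted entropy $\Phi_\eps(T) := \int_\Omega \xi(x)\, G_\eps(h_\eps(x,T))\,dx$, and then pass to the limit along the sequence $\eps_k \to 0$ constructed in the proof of Theorem~\ref{C:Th1} using Fatou. The hypotheses on $\zeta$ (namely $\zeta \in C^2(\bar{\Omega})$, $\text{supp}\,\zeta = \overline{\omega}$, and $(\zeta^4)' = 0$ on $\partial\Omega$ — together with the periodicity in the case $\omega = \Omega$) are exactly what is required to discard boundary contributions after two integrations by parts. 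Because $h_{0,\eps} \geq \eps^\theta > 0$ and $\int 1/h_0\,dx < \infty$ by hypothesis, $\Phi_\eps(0)$ is bounded uniformly in $\eps$ small enough.

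Multiplying (\ref{D:1r'}) by $\xi(x)\, G'_\eps(h_\eps)$, integrating over $\Omega$, using $G''_\eps = 1/f_\eps$, and performing two integrations by parts would yield the identity
\begin{align*}
& \tfrac{d}{dT}\, \Phi_\eps(T) + a_0 \int\limits_\Omega \xi\, h_{\eps,xx}^2 \, dx
= \int\limits_\Omega \xi'\, G'_\eps(h_\eps)\, f_\eps(h_\eps)\, J_\eps \, dx \\
& \quad + \tfrac{a_0}{2} \int\limits_\Omega \xi''\, h_{\eps,x}^2 \, dx
+ a_1 \int\limits_\Omega \xi\, h_{\eps,x}^2 \, dx
+ a_2 \int\limits_\Omega \xi\, w'(x)\, h_{\eps,x} \, dx
+ a_3 \int\limits_\Omega \xi'(x)\, G_\eps(h_\eps)\, dx,
\end{align*}
with $J_\eps := a_0 h_{\eps,xxx} + a_1 h_{\eps,x} + a_2 w'$. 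The four explicit error terms on the right are already controlled uniformly in $\eps$ on $[0,T_{loc}]$: the three involving $h_{\eps,x}$ by the $L^\infty_T L^2_x$ control from (\ref{D:a13''}) (with an extra Cauchy--Schwarz for the $w'$ term), and the $a_3$ term by the global entropy bound (\ref{BF_entropy}).

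The delicate piece is the cross-term $\int \xi'\, G'_\eps(h_\eps)\, f_\eps(h_\eps)\, J_\eps\, dx$. I would bound it via a weighted Cauchy--Schwarz inequality
$$
\left|\int\limits_\Omega \xi'\, G'_\eps\, f_\eps\, J_\eps \, dx \right|
\leq \eta \int\limits_\Omega \xi\, f_\eps\, J_\eps^2 \, dx
+ C_\eta \int\limits_\Omega \tfrac{(\xi')^2}{\xi}\, (G'_\eps(h_\eps))^2\, f_\eps(h_\eps)\, dx,
$$
with $\eta$ small so that the first summand is absorbed after integration in time by the dissipation in the energy bound (\ref{D:d2}). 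The choice $\xi = \zeta^4$ enters here in two essential ways. First, $(\xi')^2/\xi = 16\, \zeta^2 (\zeta')^2 \in L^\infty(\Omega)$. Second, a direct computation from the explicit form of $f_\eps$ in (\ref{D:reg1}) together with $G''_\eps = 1/f_\eps$ (normalized so $G_\eps, G'_\eps \to 0$ at infinity) gives the pointwise estimate
$$
(G'_\eps(z))^2\, f_\eps(z) \leq C\, G_\eps(z) \qquad \text{for all } z > 0,
$$
uniformly in $\eps$ — the ratio behaves like $1/2$ for $z \gg \eps$ and like $2/3$ for $z \ll \eps$. Consequently the second summand is dominated by $C\,\Phi_\eps(T)$ itself.

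Combining everything yields a differential inequality $\tfrac{d}{dT}\Phi_\eps \leq A\, \Phi_\eps + B(T)$ with $A$ and $\int_0^{T_{loc}} B(T)\,dT$ both bounded independently of $\eps$, so Gronwall closes the estimate $\Phi_\eps(T) \leq C$ uniformly in $\eps$. Since $G_\eps(z) \geq c/z$ uniformly for $z > 0$ and $h_{\eps_k} \to h$ uniformly with $\{h = 0\}$ of measure zero in $Q_{T_{loc}}$ (Theorem~\ref{C:Th1}), Fatou's lemma delivers (\ref{local_BF_finite}). The main obstacle is precisely the pointwise identity $(G'_\eps)^2 f_\eps \lesssim G_\eps$ together with the weighted Cauchy--Schwarz: verifying it requires the specific algebraic form of the regularization (\ref{D:reg1}), and it is this step which forces $\xi$ to be the \emph{fourth} power of $\zeta$ rather than a lower power, so that $(\xi')^2/\xi$ remains bounded and the argument can close.
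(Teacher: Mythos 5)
Your argument is correct in substance and reaches the conclusion, but it takes a \emph{genuinely different route} from the paper's proof, so a comparison is worthwhile.

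The paper never invokes the energy dissipation bound (\ref{D:d2}). Instead, after arriving at the decomposition into $I_1$, $I_2$, $I_3$ (as in your identity, integrated in time), it integrates by parts \emph{again} in the $h_{xxx}$ pieces of both $I_2$ and $I_3$, converting them to $h_{xx}$ quantities, and then absorbs these via Young's inequality into a sign-definite $\tfrac{a_0}{2}\iint \xi h_{xx}^2$ coming out of $I_3$. The pointwise estimates it uses to make this absorption work are of the form $|f_\eps G'_\eps| \leq \tfrac{1}{2}z$, $|f'_\eps G'_\eps| \leq 2$, and $|\int_0^z f_\eps G'_\eps| \leq \tfrac{1}{2}z^2 + \tfrac{3}{5}$; after the second integration by parts one hits weight ratios such as $(\xi'')^2/\xi$, and it is \emph{these} that force $\xi=\zeta^4$. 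In contrast, you split off the $h_{xxx}$ contribution via weighted Cauchy--Schwarz against the full flux $J_\eps$ and pay for it with the dissipation in (\ref{D:d2}), compensated by the pointwise bound $(G'_\eps)^2 f_\eps \leq C\,G_\eps$. I checked this bound: writing $u=\eps/z$ one gets $(G'_\eps)^2 f_\eps / G_\eps = (3+2u)^2/\bigl(6(1+u)(3+u)\bigr)$, which increases monotonically from $\tfrac12$ to $\tfrac23$, so it holds with $C=\tfrac23$. Your approach buys a shorter estimate that needs only $(\xi')^2/\xi \in L^\infty$; the paper's buys independence from the energy dissipation inequality.

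Two small inaccuracies in your writeup deserve a note. First, the remainder $\int \tfrac{(\xi')^2}{\xi}(G'_\eps)^2 f_\eps\,dx$ is bounded by a constant times $\int_\Omega G_\eps(h_\eps)\,dx$, which is \emph{not} the weighted quantity $\Phi_\eps(T)=\int \xi G_\eps\,dx$; it is the \emph{global} entropy, already controlled by (\ref{BF_entropy}). So you should bound it by the uniform constant $K_2$, exactly as you do for the $a_3$ term. With that correction, the right-hand side after time integration is a fixed constant (no $\Phi_\eps$-dependence) and the Gronwall step is superfluous: $\Phi_\eps(T) \leq \Phi_\eps(0) + C\,T_{loc}$ follows directly. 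Second, the claim that your argument forces $\xi=\zeta^4$ is not quite right: since you only need $(\xi')^2/\xi$ bounded and $\xi'' \in L^\infty$, $\xi=\zeta^2$ would already serve your route. The quartic power is dictated by the paper's weight-cascade, not yours; your proof simply accepts it as given. Neither issue is fatal, and the final Fatou passage (using $G_\eps(z)\ge \tfrac{1}{2z}$ and the uniform convergence $h_{\eps_k}\to h$ with $\{h=0\}$ of measure zero) is the same as in the paper and is correct.
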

The proof of Lemma \ref{F:local_BF} is given in Appendix \ref{A_priori_proofs}.
The proof of Proposition \ref{F:LemPos} is essentially a combination of the proofs
of Corollary 4.5 and Theorem 6.1 in \cite{B8} and is provided here for the reader's
convenience.
\begin{proof}[Proof of Proposition~\ref{F:LemPos}]
Choose
the localizing function
$\zeta(x)$ to satisfy the hypotheses of
Lemma \ref{F:local_BF}.  Hence, (\ref{local_BF_finite}) holds
for every $T \in [0,T_{loc}]$.

First, we prove
$h(x,T) > 0$ for almost every $x \in \omega$, for all $T \in [0,T_{loc}]$.
Assume not.  Then there is a time $T \in [0,T_{loc}]$ such that the set
$\{ x \; | \; h(x,T) = 0 \} \cap \omega$ has positive measure.  Then
$$
\infty > \int\limits_\Omega \xi(x) \tfrac{1}{h(x,T)} \; dx
\geq
\int\limits_{ \{h(\cdot,T) = 0 \} \cap \omega } \xi(x) \tfrac{1}{h(x,T)} \; dx
= \infty.
$$
This contradiction implies
there can be no time at which $h$ vanishes on a set of positive measure
in $\omega$, as desired.

Now, we prove
$h(x,T) > 0$ for all $x \in \omega$,  for almost every $T \in [0,T_{loc}]$.
By (\ref{Linf_H2}), $h_{xx}(\cdot,T) \in L^2(\Omega)$ for almost all $T \in [0,T_{loc}]$
hence $h(\cdot,T) \in C^{3/2}(\Omega)$ for almost all $T \in [0,T_{loc}]$.
Assume $T_0$ is such that
$h(\cdot,T_0) \in C^{3/2}(\Omega)$ and $h(x_0,T_0) = 0$ at some $x_0 \in \omega$.
Then there is a $L$ such that
$$
h(x,T_0) = |h(x,T_0)-h(x_0,T_0)| \leq L | x-x_0|^{3/2}.
$$
Hence
$$
\infty > \int\limits_\Omega \xi(x) \tfrac{1}{h(x,T_0)} \; dx
\geq
\tfrac{1}{L} \int\limits_\Omega \xi(x) |x - x_0|^{-3/2} \; dx = \infty.
$$
This contradiction implies there
can be no point $x_0$ such that $h(x_0,T_0) = 0$, as desired.  Note that
we used $\xi > 0$ on $\omega$ and $x_0 \in \omega$ to conclude that the
integral diverges.

\end{proof}

We close our discussion by illustrations of positivity and long time
existence via numerical simulations of the initial value problem for
different regimes of the PDE.

Figure \ref{no_advect} considers the PDE with no advection, $h_t +
(h^3 ( h_{xxx} + 16\, h_x))_x = 0$.  The PDE is translation
invariant in $x$ and constant steady states are linearly unstable.
As a result, any non-constant behaviour observed in a solution
starting from constant initial data would be due to growth of
round-off error. For this reason, non-constant initial data is
chosen: $h_0(x) = 0.3 + 0.02 \, \cos(x) + 0.02 \, \cos(2 x)$. The
$L^2$ and $H^1$ norms of the resulting solution appear to be
converging to limiting values as time passes and long-time limit of
the solution appears to be four steady-state droplets of the form $a
\cos(4 x + \phi) + b$ for appropriate values of $a$, $\phi$, and
$b$. Like the PDE, the simulation shown respects the symmetry about
$x=0$ of the initial data.

Figure \ref{no_linear_advect} shows the evolution from constant
initial data for the PDE with nonlinear advection but no linear
advection: $h_t + (h^3 ( h_{xxx} + 16\, h_x - 8 \cos(x)))_x = 0$.
The long-time limit appears to be a steady state which is zero (or
nearly zero on $[-\pi,0]$ ) with a droplet supported within
$(0,\pi)$ and centred roughly about the mid point ($x= \pi/2$).

Finally, Figure \ref{with_advect} shows the evolution resulting from
the same constant initial data for the PDE with both linear and
nonlinear advection: $h_t + (h^3 ( h_{xxx} + 16\, h_x - 8
\cos(x)))_x + 3 h_x= 0$. The long-time limit appears to be a
strictly positive steady state.

We close by noting that the PDE considered in Figure
\ref{with_advect} corresponds to coefficient $a_3 = 3$ in the PDE
(\ref{A:mainEq}).  As we increase the value of $a_3$ we find there
appears to be a critical value past which the solution appears to
converge to a time-periodic behaviour rather than a steady state.

\begin{figure}
\begin{center}
\includegraphics[width= 6cm] {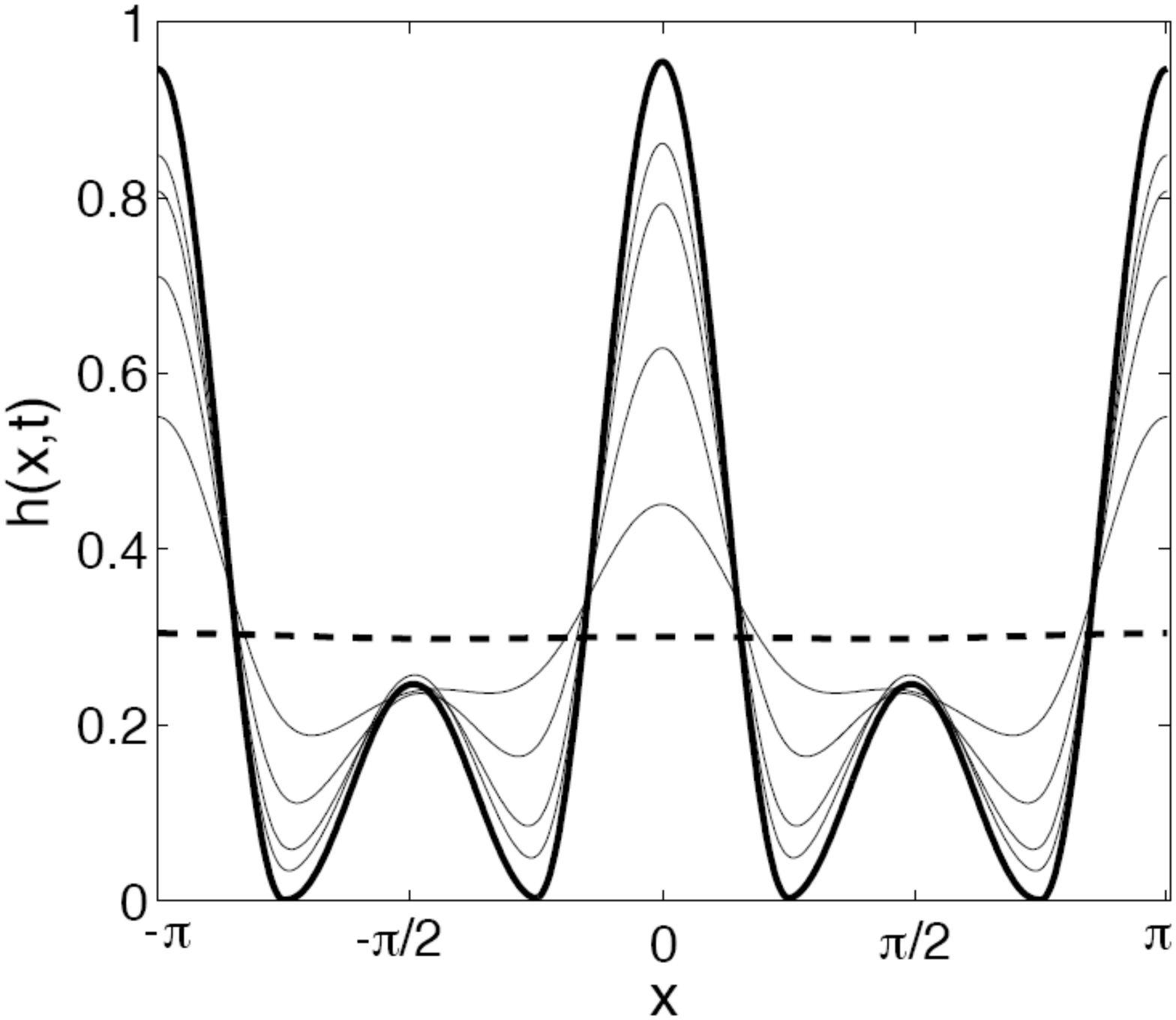}
\includegraphics[width= 6cm] {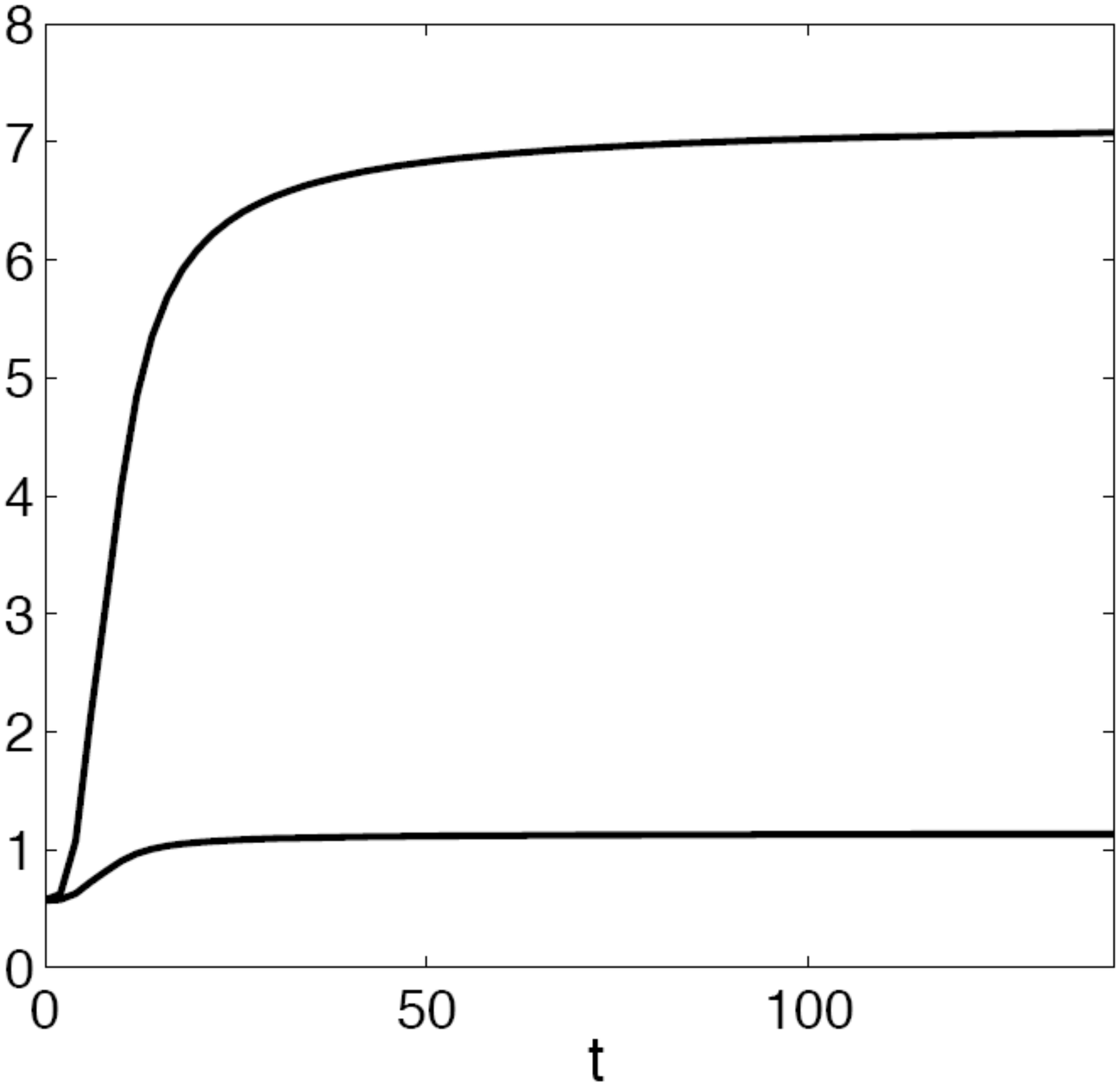}
\end{center}
\vspace{-.4in} \caption{\label{no_advect} The evolution equation
with no linear or nonlinear advection, $h_t + (h^3 ( h_{xxx} + 16\,
h_x))_x = 0$, corresponding to $a_0 = 1$, $a_1 = 16$, and $a_2 = a_3
= 0$.  The initial data is $h_0(x) = 0.3 + 0.02 \, \cos(x) + 0.02 \,
\cos(2 x)$.  Left plot: the solution at times $t=0$ (dashed line),
$t = 12, 12.5, 13, 15$ (solid lines), and $t=140$ (heavy line).
Right plot: the $L^2$ and $H^1$ norms plotted as a function of time.
}
\end{figure}

\begin{figure}
\begin{center}
\includegraphics[height=4.7cm] {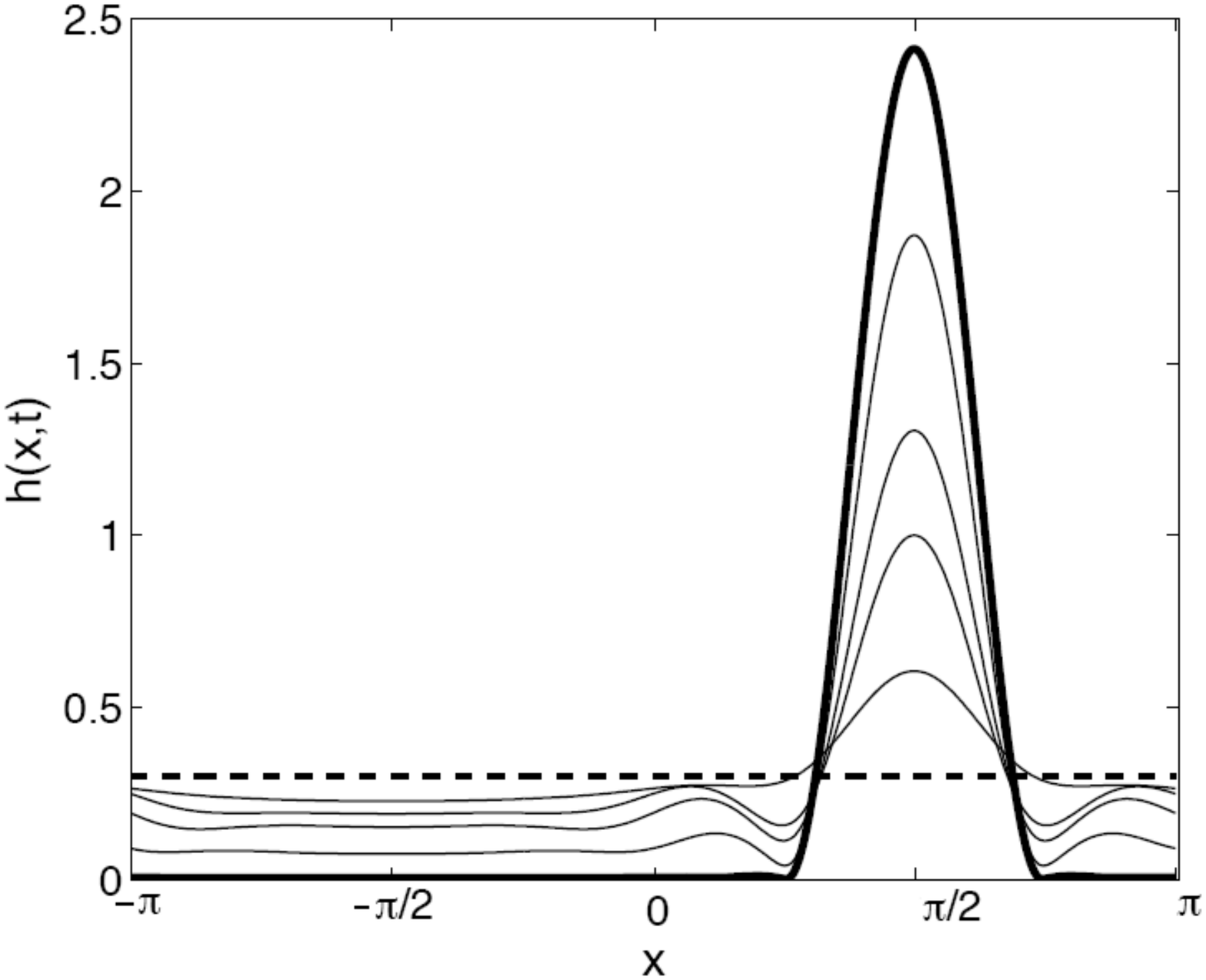}
\includegraphics[height=4.7cm] {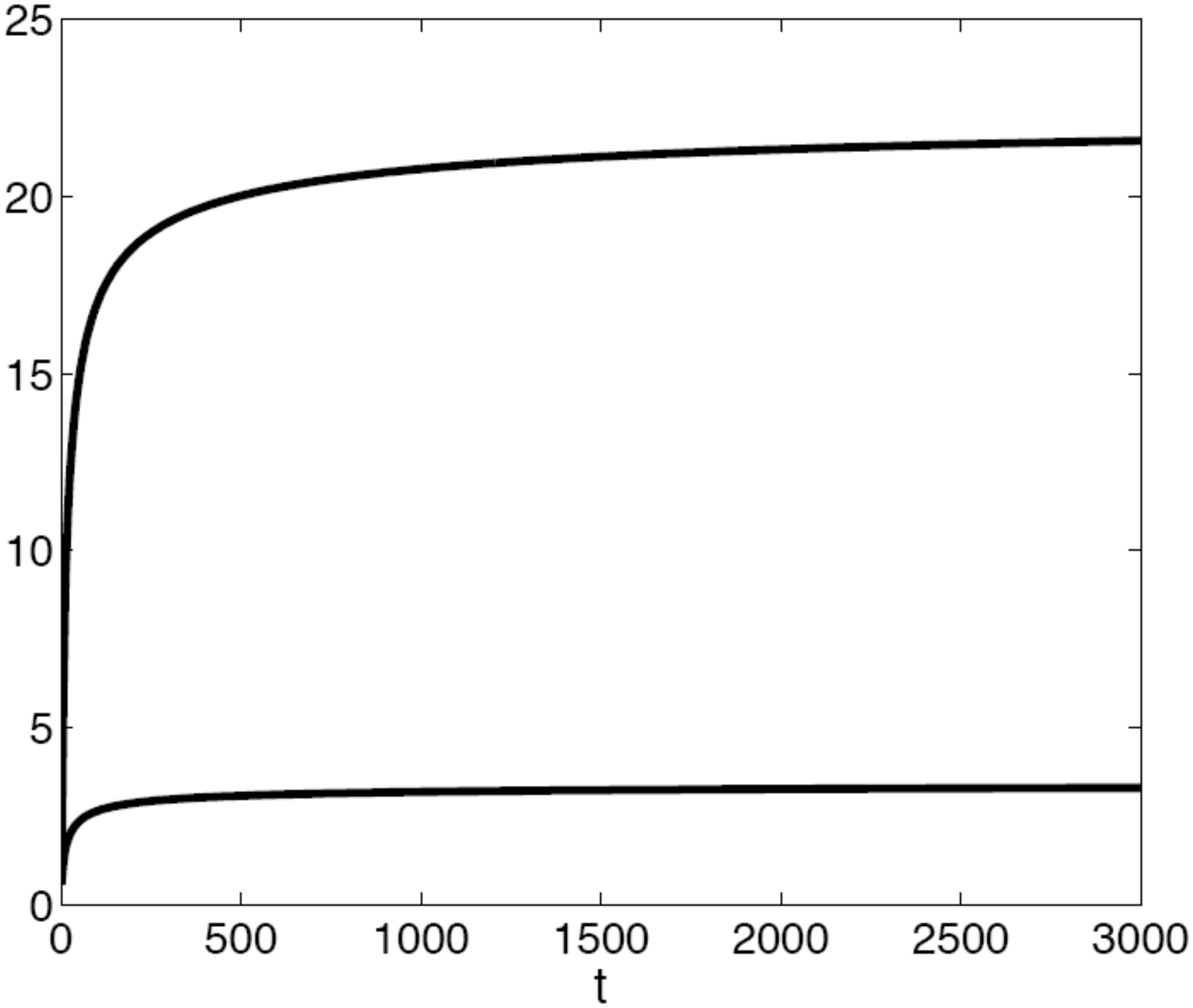}
\end{center}
\vspace{-.4in} \caption{\label{no_linear_advect} The evolution
equation with nonlinear advection but no linear advection, $h_t +
(h^3 ( h_{xxx} + 16\, h_x - 8 \cos(x)))_x = 0$, corresponding to
$a_0 = 1$, $a_1 = 16$, $a_2 = 8$, and $a_3 = 0$.  The initial data
is $h_0(x) = 0.3$. Left plot: the solution at  times $t=0$ (dashed
line), $t = 0.5, 1, 2, 10$ (solid lines), and $t=3000$ (heavy line).
Right plot: the $L^2$ and $H^1$ norms plotted as a function of time.
}
\end{figure}

\begin{figure}
\begin{center}
\includegraphics[height=4.7cm] {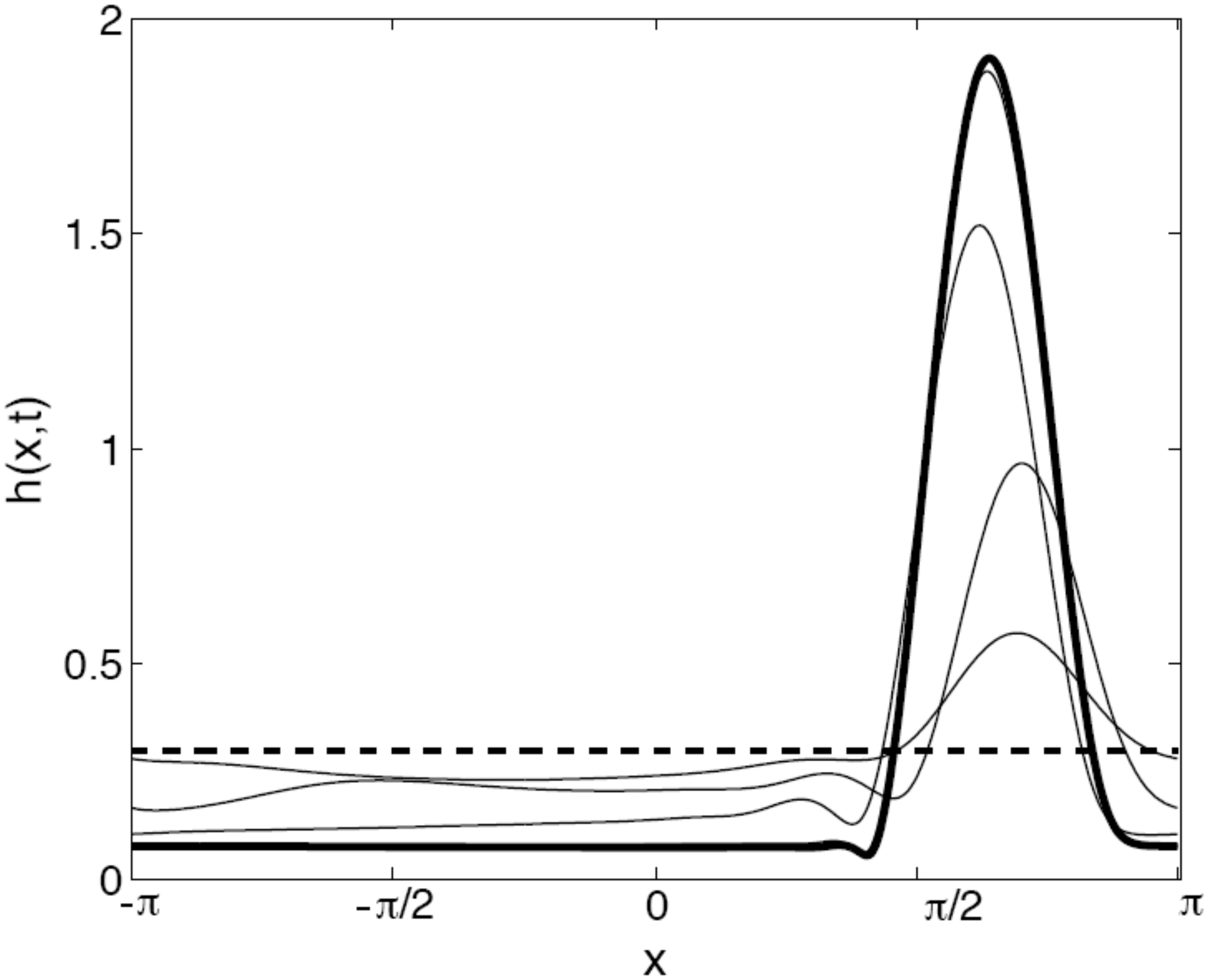}
\includegraphics[height=4.7cm] {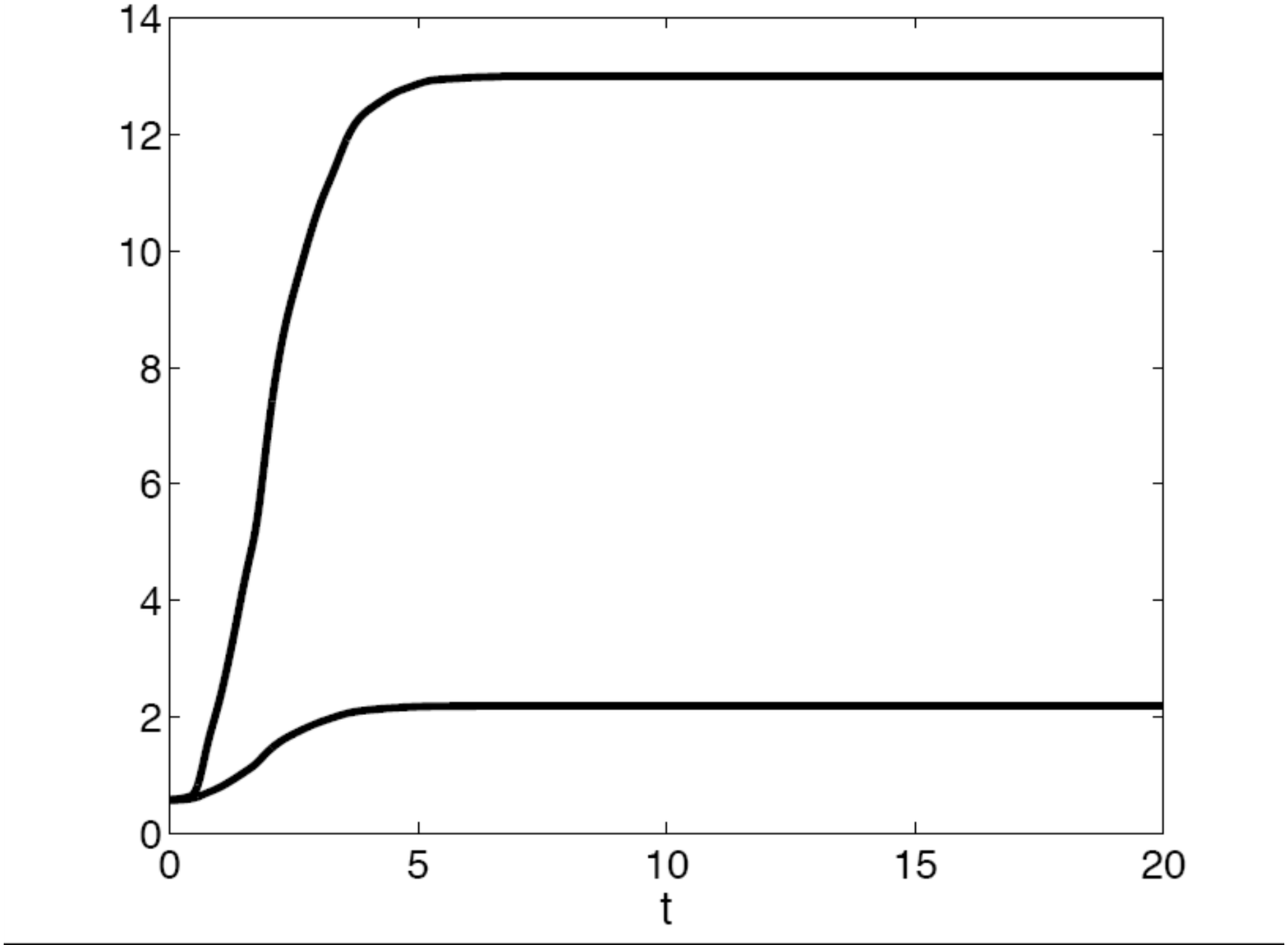}
\end{center}
\vspace{-.4in} \caption{\label{with_advect} The evolution equation
with both linear and nonlinear advection, $h_t + (h^3 ( h_{xxx} +
16\, h_x - 8 \cos(x)))_x + 3 h_x= 0$, corresponding to $a_0 = 1$,
$a_1 = 16$, $a_2 = 8$, and $a_3 = 3$.  The initial data is $h_0(x) =
0.3$. Left plot: the solution at times $t=0$ (dashed line), $t =0.5,
1, 2, 4$ (solid lines), and $t=20$ (heavy line).  Right plot: the
$L^2$ and $H^1$ norms plotted as a function of time. }
\end{figure}

\appendix

\section{Proofs of A Priori Estimates}
\label{A_priori_proofs}

The first observation is that the periodic boundary conditions imply
that classical solutions of equation (\ref{D:1r'}) conserve mass:
\begin{equation}\label{D:mass}
\int\limits_{\Omega} {h_{\delta \eps}(x,t)\,dx} =  \int\limits_{\Omega}
{h_{0,\delta \varepsilon}(x) \,dx} = M_{\delta \varepsilon } < \infty \text{ for all } t
> 0.
\end{equation}
Further, (\ref{D:inreg}) implies $M_{\delta \eps} \to M = \int h_0$ as $\eps, \delta \to 0$.
The initial data in this article have $M > 0$, hence $M_{\delta \eps} > 0$
for $\delta$ and $\eps$ sufficiently small.

Also, we will relate the $L^p$ norm of $h$ to the $L^p$ norm of its zero-mean part as follows:
$$
|h(x)| \leq \left| h(x)-\tfrac{M}{|\Omega|} \right| +
\tfrac{M}{|\Omega|} \Longrightarrow \| h \|_p^p \leq 2^{p-1} \; \| v
\|_p^p + \left(\tfrac{2}{  |\Omega| } \right)^{p-1} \; M^{p}
$$
where $v := h-M/|\Omega|$ and we have assumed that $M \geq 0$. We
will use the Poincar\'{e} inequality which holds for  any zero-mean
function in $H^1(\Omega)$
\begin{equation} \label{Poincare}
\| v \|_p^p \leq b_1 \|v_x \|_p^p \qquad 1 \leq p < \infty
\end{equation}
with $b_1 = |\Omega|^p/(p \: 2^{p-1})$.

Also used will be an interpolation inequality \cite[Th. 2.2, p. 62]{Lady} for functions of zero mean in $H^1(\Omega)$:
\begin{equation} \label{Lady_ineq}
\| v \|_p^p \leq b_2 \, \| v_x \|_2^{ap} \; \| v \|_r^{(1-a)p}
\end{equation}
where $r \geq 1$, $p \geq r$,
$$
a = \tfrac{1/r - 1/p}{1/r + 1/2}, \qquad
b_2 = \left(1 + r/2 \right)^{a p}.
$$
It follows that for any zero-mean function $v$ in $H^1(\Omega)$
\begin{equation} \label{LpH1}
\| v \|_p^p  \leq b_3 \| v_x \|_2^p,
\quad
\Longrightarrow
\quad
\| h \|_p^p \leq b_4 \| h_x \|_2^p + b_5 M_{\delta \eps}^p
\end{equation}
where
$$
b_3 =
\begin{cases}
b_1 \; | \Omega |^{(2-p)/p} & \mbox{if} \quad 1 \leq p \leq 2 \\
b_1^{(p+2)/2} \; b_2 & \mbox{if} \quad  2 < p < \infty
\end{cases}, \quad
b_4 = 2^{p-1} \, b_3, \quad b_5 = \left( \tfrac{2}{|\Omega|} \right)^{p-1}
$$
To see that (\ref{LpH1}) holds, consider two cases.  If $1 \leq p < 2$, then by (\ref{Poincare}), $\| v \|_p$ is controlled by $\| v_x \|_p$.  By the H\"older inequality, $\| v_x \|_p$ is then controlled by $\| v_x \|_2$.  If $p > 2$ then by (\ref{Lady_ineq}), $\|v \|_p$ is controlled by $\|v_x\|_2^a \|v \|_2^{1-a}$ where $a = 1/2 - 1/p$.  By  the Poincar\'e inequality, $\| v \|_2^{1-a}$ is controlled by $\| v_x \|_2^{1-a}$.

\begin{proof}[Proof of Lemma~\ref{MainAE}]
In the following, we denote the classical solution $h_{\delta \eps}$ by $h$ whenever
there is no chance of confusion.

To prove the bound (\ref{D:a13''}) one starts by multiplying
(\ref{D:1r'}) by $- h_{xx}$, integrating over $Q_T$, and using the
periodic boundary conditions (\ref{D:2r'}) yields
\begin{align}
\label{D:a1}
& \tfrac{1}{2} \int\limits_{\Omega} {h_x^2(x,T) \,dx} + a_0
\iint\limits_{Q_T} {f_{\delta \varepsilon}(h) h^2_{xxx} \,dx dt} \\
\notag & \hspace{.2in} = \tfrac{1}{2}\int\limits_{\Omega}
{{h_{0,\delta \varepsilon,x}}^2(x) \,dx} - a_1 \iint\limits_{Q_T}
{f_{\varepsilon}(h) h_x h_{xxx}\,dx dt}+
\delta a_1 \iint\limits_{Q_T}h_{xx}^2 \; dx dt \\
\notag & \hspace{.4in} - a_2 \iint\limits_{Q_T} {f_{\delta
\varepsilon}(h)w'  h_{xxx}\,dx dt} - \delta a_2 \iint\limits_{Q_T}
w^{\prime} \, h_{xxx} \; dx dt.
\end{align}

By Cauchy and Young inequalities, due to
(\ref{Poincare})--(\ref{LpH1}), it follows from (\ref{D:a1}) that
\begin{align}
\label{D:a11} & \tfrac{1}{2} \int\limits_{\Omega} {h_x^2(x,T)
\,dx} + \tfrac{a_0}{2}\iint\limits_{Q_T} {f_{\delta \varepsilon}(h) h^2_{xxx}\,dx dt} \\
& \notag \hspace{.2in} \!\!\!\!\! \leq
\tfrac{1}{2}\int\limits_{\Omega} {{h_{0,\delta \varepsilon,x}}^2
\,dx} + c_3 \iint\limits_{Q_T} {h^2_{xx}\,dx dt} + c_4
\int\limits_0^T \max\left\{ 1 , \left( \int\limits_\Omega h_x^2 \;
dx \right)^3 \right\} \; dt
\end{align}
where  $c_1 = b_2^2/8 + b_4/2$,  $c_2 = M_{\delta \eps}^6 \; b_5/2$,
$c_3 = \tfrac{a_1^2}{2 a_0} + \delta |a_1|$, \\ $ c_4 =
\tfrac{a_1^2}{a_0} c_1 + \tfrac{a_2^2}{a_0} \| w' \|_\infty^2 b_4 +
\tfrac{a_1^2}{a_0}c_2 + \tfrac{a_2^2}{a_0} \| w' \|_\infty^2 b_5
M_{\delta \eps}^3 + \delta \tfrac{a_2^2}{a_0} \| w' \|_2^2$.

Now, multiplying (\ref{D:1r'}) by $G'_{\delta \varepsilon}(h)$,
integrating over $Q_T$, and using the periodic boundary conditions
(\ref{D:2r'}), we obtain
\begin{multline}\label{D:aa1}
\int\limits_{\Omega} {G_{\delta \varepsilon}(h(x,T))\,dx} + a_0
\iint\limits_{Q_T} {h^2_{xx}\,dx dt} = \int\limits_{\Omega}
{G_{\delta \varepsilon}(h_{0, \delta \varepsilon})dx} +
a_1 \iint\limits_{Q_T} {h^2_{x}dx dt} \\
-a_3 \iint\limits_{Q_T} {(G_{\delta \varepsilon}(h))_{x} \,dx dt} + a_2
\iint\limits_{Q_T} {w' h_{x}\,dx dt}.
\end{multline}
By the periodic boundary conditions, we deduce
\begin{align}\notag
& \int\limits_{\Omega} {G_{\delta \varepsilon}(h(x,T))\,dx} + a_0
\iint\limits_{Q_T} {h^2_{xx}\,dx dt} \\
& \label{D:aa2} \hspace{.3in} \leqslant \int\limits_{\Omega}
{G_{\delta \varepsilon}(h_{0, \delta \varepsilon})\,dx} + c_5
\int\limits_0^T \max\left\{ 1, \int\limits_\Omega h_x^2(x,t) \; dx
\right\} \; dt,
\end{align}
where $ c_5 = |a_1| + |a_2| \| w' \|_2 $. Further, from
(\ref{D:a11}) and (\ref{D:aa2}) we find
\begin{align}\label{D:aa3} \notag
& \int\limits_{\Omega} {h_x^2 \,dx} + \tfrac{2 c_3}{a_0}
\int\limits_\Omega G_{\delta \eps}(h) \; dx + a_0 \iint\limits_{Q_T}
{f_{\delta \varepsilon}(h) h^2_{xxx}  \,dx dt} \leq  \int\limits_\Omega {h_{0,\delta \eps,x}}^2dx \\
& \hspace{.1in}  + \tfrac{2 c_3}{a_0} \int\limits_\Omega G_{\delta
\eps}(h_{0,\delta \eps})dx + c_6 \int\limits_0^T \max\left\{ 1,
\left( \int\limits_\Omega h_x^2(x,t) \; dx \right)^3 \right\}dt
\end{align}
where $c_6 = 2 c_3 c_5/a_0 + 2 c_4$. Applying the nonlinear
Gr\"onwall lemma \cite{Bihari} to
$$
v(T) \leq v(0) + c_6 \int\limits_0^T \max\{1,v^3(t)\} \; dt$$ with
$v(t) = \int (h_x^2(x,t) + 2 c_3/a_0 \: G_{\delta \eps}(h(x,t)))
\; dx$ yields
\begin{align} \label{H1_control}
& \int\limits_\Omega h_x^2(x,t) + 2 \tfrac{c_3}{a_0} G_{\delta \eps}(h(x,t)) \; dx \\
& \leq \sqrt{2} \max\left\{ 1, \int\limits_\Omega ({h_{0,\delta
\eps,x}}^2(x) + 2 \tfrac{c_3}{a_0} G_{\delta \eps}(h_{0,\delta
\eps}(x))) \; dx \right\} = K_{\delta \eps} < \infty \notag
\end{align}
for all $t \in [0,T_{\delta \eps,loc}]$ where
\begin{equation} \label{Tloc_eps_is}
T_{\delta \eps,loc} := \tfrac{1}{4 c_6} \min\left\{ 1,   \left(
\int\limits_\Omega ({h_{0,\delta \eps,x}}^2(x) + 2 \tfrac{c_3}{a_0}
G_{\delta \eps}(h_{0,\delta \eps}(x)))\,dx \right)^{-2} \right\}.
\end{equation}
Using the $\delta \to 0, \eps \to 0$ convergence of the initial data
and the choice of $\theta \in (0,2/5)$
(see (\ref{D:inreg})) as well as the assumption that the initial data $h_0$ has
finite entropy (\ref{C:inval}),
the times $T_{\delta \eps, loc}$ converge to a positive limit and
the upper bound $K$ in
(\ref{H1_control}) can be taken finite and independent of $\delta$ and $\epsilon$
for $\delta$ and $\eps$ sufficiently small.  (We refer the reader to the end
of the proof of Lemma \ref{F:local_BF} in this Appendix for a fuller explanation
of a similar case.)
Therefore there
exists $\delta_0>0$ and $\eps_0>0$ and $K$ such that
the bound (\ref{H1_control}) holds for all $0 \leq \delta < \delta_0$ and
$0 < \eps < \eps_0$
with $K$ replacing $K_{\delta \eps}$ and for all
\begin{equation} \label{Tloc_is}
0 \leq t \leq T_{loc} := \tfrac{9}{10} \lim_{\eps \to 0, \delta \to 0} T_{\delta \eps,loc}.
\end{equation}

Using the uniform bound on $\int h_x^2$ that (\ref{H1_control})
provides, one can find a uniform-in-$\delta$-and-$\eps$ bound for the
right-hand-side of (\ref{D:aa3}) yielding the desired a priori bound
(\ref{D:a13''}).  Similarly, one can find a
uniform-in-$\delta$-and-$\eps$ bound for the right-hand-side of
(\ref{D:aa2}) yielding the desired a priori bound (\ref{BF_entropy}).

To prove the bound (\ref{D:d2}), multiply (\ref{D:1r'}) by $- a_0
h_{xx} - a_1 h - a_2 w$, integrate over $Q_T$, integrate by parts,
use the periodic boundary conditions  (\ref{D:2r'}), and use the
mass conservation (see (\ref{D:mass})) to find
\begin{align}
& \notag \mathcal{E}_{\delta \varepsilon}(T) + \iint\limits_{Q_T}
{f_{\delta \varepsilon}(h) (a_0 h_{xxx} + a_1 h_x + a_2 w'(x))^2
\,dx dt} \\
& \hspace{.4in}  \label{D:d0} \leq \mathcal{E}_{\delta
\varepsilon}(0) + | a_2 a_3| \| w' \|_\infty \left( |\Omega|^2
\sqrt{K_1} +  2 M \right) T.
\end{align}
Hence the desired bound (\ref{D:d2}) is obtained if the constant
$$
K_3 =  | a_2 a_3| \| w' \|_\infty ( |\Omega|^2 \sqrt{K_1} +  2 M ).
$$
The time $T_{loc}$ and the constants $K_1$, $K_2$, and $K_3$ are
determined by $\delta_0$, $\eps_0$, $a_0$, $a_1$, $a_2$, $w'$,
$|\Omega |$, and $h_0$.
\end{proof}

\begin{proof}[Proof of Lemma~\ref{MainAE2}]
In the following, we denote the positive, classical solution $h_{\eps}$ by $h$ whenever
there is no chance of confusion.

Multiplying (\ref{D:1r'}) by $(G^{(\alpha)}_{\varepsilon} (h))'$,
integrating over $Q_T$, taking $\delta \to 0$,
and using the periodic boundary conditions
(\ref{D:2r'}), yields
\begin{align}
& \label{E:b1}
\int\limits_{\Omega} {G^{(\alpha)}_{\varepsilon}(h(x,T))\,dx} + a_0
\iint\limits_{Q_T} {h^{\alpha}h^2_{xx}\,dx dt}  +
a_0 \tfrac{\alpha(1 - \alpha)}{3} \iint\limits_{Q_T} {h^{\alpha -2
}h^4_{x}\,dx dt} \\
& =
\int\limits_{\Omega}
{G^{(\alpha)}_{\varepsilon}(h_{0\varepsilon})\,dx} + a_1
\iint\limits_{Q_T} {h^{\alpha} h^2_{x}\,dx dt}
- \tfrac{a_2}{\alpha+1} \iint\limits_{Q_T} {h^{\alpha+1} w'' \,dx dt}.
\notag
\end{align}

\textbf{Case 1: $0 < \alpha < 1$.} The coefficient
multiplying $\iint h^{\alpha-2} h_x^4$ in (\ref{E:b1}) is positive and
can therefore be used to control the term
$\iint h^\alpha h_x^2$ on the
right--hand side of (\ref{E:b1}).  Specifically, using the Cauchy-Schwartz
inequality and the Cauchy inequality,
\begin{equation} \label{E:b2}
a_1\iint\limits_{Q_T} {h^{\alpha} h^2_{x}}\; dx dt \leqslant
\tfrac{a_0 \alpha(1 - \alpha)}{6}\iint\limits_{Q_T} {h^{\alpha-2}
h^4_{x}}\; dx dt + \tfrac{3a_1^2}{2a_0\alpha(1 -
\alpha)}\iint\limits_{Q_T} {h^{\alpha + 2} }\;dx dt.
\end{equation}
Using the bound (\ref{E:b2}) in (\ref{E:b1}) yields
\begin{align}
& \label{E:b3}
\int\limits_{\Omega} {G^{(\alpha)}_{\varepsilon}(h(x,T))\,dx} + a_0
\iint\limits_{Q_T} {h^{\alpha}h^2_{xx}\,dx dt}  +
a_0 \tfrac{\alpha(1 - \alpha)}{6} \iint\limits_{Q_T} {h^{\alpha -2
}h^4_{x}\,dx dt} \\
& \leq \int\limits_{\Omega}
{G^{(\alpha)}_{\varepsilon}(h_{0\varepsilon})\,dx} + \tfrac{3
a_1^2}{2 a_0 \alpha (1-\alpha)} \iint\limits_{Q_T} {h^{\alpha+2}\;
dx dt } + \tfrac{|a_2| \| w'' \|_\infty}{\alpha+1}
\iint\limits_{Q_T} {h^{\alpha+1}dxdt }. \notag
\end{align}
By  (\ref{LpH1}),
\begin{align}
&\notag \int\limits_{\Omega}
{G^{(\alpha)}_{\varepsilon}(h(x,T))\,dx} + a_0 \iint\limits_{Q_T}
{h^{\alpha}h^2_{xx}\,dx dt} + a_0 \tfrac{\alpha(1 - \alpha)}{6}
\iint\limits_{Q_T} {h^{\alpha -2}h^4_{x}\,dx dt}\\
& \hspace{.2in} \label{E:b9a} \leq \int\limits_{\Omega}
{G^{(\alpha)}_{\varepsilon}(h_{0\varepsilon})\,dx} + d_1
\int\limits_0^T \max\left\{ 1, \left( \int\limits_\Omega h_x^2 \; dx
\right)^{\tfrac{\alpha}{2}+1} \right\}dt
\end{align}
where
$$
d_1 = b_4\left(\tfrac{3 a_1^2}{2 a_0 \alpha (1-\alpha)} +
\tfrac{|a_2| \| w'' \|_\infty}{1+\alpha}\right) + b_5 \; \left(
 \tfrac{3 a_1^2}{2 a_0 \alpha (1-\alpha)} \; M_\eps^{\alpha+2}
 +  \tfrac{|a_2| \| w'' \|_\infty}{1+\alpha} \;
 M_\eps^{\alpha+1}\right).
$$
Using the Cauchy inequality in (\ref{D:aa3}) and taking $\delta
\to 0$ yields
\begin{align} \label{E:b9b}
& \int\limits_{\Omega} {h_x^2 \,dx} + a_0
\iint\limits_{Q_T} {f_{\varepsilon}(h) h^2_{xxx}\,dx dt} \\
& \notag \hspace{.2in} \leq \int\limits_{\Omega} {h_{0\varepsilon,
x}^2\,dx} + \tfrac{2 a_1^2}{a_0} \iint\limits_{Q_T} { h^3 h^2_{x}
\,dx dt} + \tfrac{2 a_2^2 \| w' \|^2_\infty}{a_0} \iint\limits_{Q_T}
{ h^3 \,dx dt}.
\end{align}
Applying the Cauchy-Schwartz inequality and (\ref{LpH1}) yields
\begin{align} \notag
& \int\limits_{\Omega} {h_x^2 \,dx} + a_0 \iint\limits_{Q_T}
{f_{\varepsilon}(h) h^2_{xxx}\,dx dt}
\leq  \int\limits_{\Omega} {h_{0\varepsilon, x}^2\,dx} \\
& \hspace{.2in} \notag + \tfrac{a_0 \alpha(1-\alpha)}{6}
\iint\limits_{Q_T} { h^{\alpha-2} h^4_{x} \,dx dt} + d_2
\int\limits_0^T \max\left\{ 1, \left( \int\limits_\Omega h_x^2 \; dx
\right)^{4-\tfrac{\alpha}{2}} \right\} \; dt
\end{align}
where
$$
d_2 = b_4\left(\tfrac{6 a_1^4}{ a_0^3 \alpha(1-\alpha)}+ \tfrac{2
a_2^2}{a_0} \| w' \|_\infty^2\right) + b_5 \left( \tfrac{6
a_1^4}{a_0^3 \alpha(1-\alpha)} \; M_\eps^{8-\alpha} + \tfrac{2
a_2^2}{a_0} \| w' \|_\infty^2  \; M_\eps^3 \right).
$$
Adding $\int G_\eps^{(\alpha)}(h(x,T))$ to both sides of
(\ref{E:b9b}), $a_0 \iint h^\alpha h_{xx}^2$ to the resulting
righthand side, and using (\ref{E:b9a}),
\begin{align} \label{E:b9e}
& \int\limits_{\Omega} {h_x^2(x,T) \,dx}
+ \int\limits_{\Omega} {G^{(\alpha)}_{\varepsilon}(h(x,T))\,dx}
+ a_0
\iint\limits_{Q_T} {f_{\varepsilon}(h) h^2_{xxx}\,dx dt} \\
& \hspace{.2in} \notag \leq \notag
 \int\limits_{\Omega} {h_{0\varepsilon, x}^2\,dx}
+ \int\limits_\Omega G^{(\alpha)}_{\varepsilon}(h_{0\varepsilon})
\,dx + d_3 \int\limits_0^T \max\left\{ 1, \left( \int\limits_\Omega
h_x^2 \; dx \right)^{4-\tfrac{\alpha}{2}} \right\}
\end{align}
where $d_3 = d_1 + d_2$. Applying the nonlinear Gr\"onwall lemma
\cite{Bihari} to
$$
v(T) \leq v(0) + d_3 \int\limits_0^T \max\{1,v^{4-\alpha/2}(t)\}
\; dt$$ with $v(T) = \int (h_x^2(x,T) + \:
G_{\eps}^{(\alpha)}(h(x,T))) \; dx$ yields,
\begin{align} \label{bound1}
& \int\limits_\Omega (h_x^2(x,T) + G_{\eps}^{(\alpha)}(h(x,T))) \; dx \\
& \hspace{.4in} \leq 4^{\tfrac{1}{6-\alpha}} \max\left\{ 1,
\int\limits_\Omega ({h_{0,\eps}}_x^2(x) +
G_{\eps}^{(\alpha)}(h_{0,\eps}(x))) \; dx \right\} = K_\eps < \infty
\notag
\end{align}
for all $T$:
$$
0 \leq T \leq T_{\eps,loc}^{(\alpha)} := \tfrac{1}{d_3(6-\alpha)}
\min \Bigl\{ 1,   \Bigl(  \int\limits_\Omega ({h_{0,\eps}}_x^2(x)
+ G_{\eps}^{(\alpha)}(h_{0,\eps}(x))) \; dx
\Bigr)^{-\tfrac{6-\alpha}{2}} \Bigr\}.
$$
The bound (\ref{bound1}) holds for all $0 < \eps < \eps_0$ where
$\eps_0$ is from Lemma \ref{MainAE} and for all $t \leq
\min\{T_{loc},T_{\eps,loc}^{(\alpha)}\}$ where $T_{loc}$ is from Lemma
\ref{MainAE}.

Using the $\eps \to 0$ convergence of the initial data and the
choice of $\theta \in (0,2/5)$ (see (\ref{D:inreg})) as well as
the assumption that the initial data $h_0$ has finite
$\alpha$-entropy (\ref{finite_alpha_ent}), the times $T_{\eps,
loc}^{(\alpha)}$ converge to a positive limit and the upper bound
$K_\eps$ in (\ref{bound1}) can be taken finite and independent of
$\varepsilon$. (We refer the reader to the end of the proof of
Lemma \ref{F:local_BF} in this Appendix for a fuller explanation
of a similar case.) Therefore there exists $\eps_0^{(\alpha)}$ and
$K$ such that the bound (\ref{bound1}) holds for all $0 < \eps <
\eps_0^{(\alpha)}$ with $K$ replacing $K_\eps$ and for all
\begin{equation} \label{Tloc_alpha_is}
0 \leq t \leq T_{loc}^{(\alpha)} := \min\left\{T_{loc},
\tfrac{9}{10} \lim_{\eps \to 0} T_{\eps,loc}^{(\alpha)}
\right\}
\end{equation}
where $T_{loc}$ is the time from Lemma \ref{MainAE}.  Also, without
loss of generality, $\eps_0^{(\alpha)}$ can be taken to be less than
or equal to the $\eps_0$ from Lemma \ref{MainAE}.

Using the uniform bound on $\int h_x^2$ that (\ref{bound1})
provides, one can find a uniform-in-$\eps$ bound for the
right-hand-side of (\ref{E:b9a}) yielding the desired bound
\begin{equation} \label{alpha_bound1}
\int\limits_{\Omega} {G^{(\alpha)}_{\varepsilon}(h(x,T))\,dx} + a_0
\iint\limits_{Q_T} {h^{\alpha}h^2_{xx}}\,dx dt
 +
a_0 \tfrac{\alpha(1 - \alpha)}{6} \iint\limits_{Q_T} {h^{\alpha -2
}h^4_{x}\,dx dt}  \leq K_1
\end{equation}
which holds for all $0 <\eps < \eps_0^{(\alpha)}$ and all
$0 \leq T \leq T_{loc}^{(\alpha)}$.

It remains to argue that (\ref{alpha_bound1}) implies that
for all $0 < \eps < \eps_0^{(\alpha)}$ that
$h_\eps^{\alpha/2 + 1}$ and $h_\eps^{\alpha/4 + 1/2}$ are contained
in
balls in $L^{2}(0, T; H^2(\Omega))$
and $L^{2}(0, T; W^1_4(\Omega))$ respectively.  It suffices to show that
$$
\iint\limits_{Q_T}
\left(
h_\eps^{\alpha/2 + 1}\right)^2_{xx} \; dx dt \leq K, \qquad
\iint\limits_{Q_T}
\left(
h_\eps^{\alpha/4 + 1/2}\right)^4_x \; dx dt \leq K
$$
for some $K$ that is independent of $\eps$ and $T$.
The integral $\iint (h_\eps^{\alpha/2+1})_{xx}^2$ is a linear
combination of
$\iint h^{\alpha-2} h_x^4$,
$\iint h^{\alpha-1} h_x^2 h_{xx}$, and
$\iint h^{\alpha} h_{xx}^2$.
Integration by parts and the periodic boundary conditions imply
\begin{equation} \label{calc_ident}
\tfrac{1 - \alpha}{3} \iint\limits_{Q_T} {h^{\alpha -2
}h^4_{x}\,dx dt} = \iint\limits_{Q_T} {h^{\alpha -1 }h^2_{x}
h_{xx}\,dx dt}
\end{equation}
Hence $\iint (h_\eps^{\alpha/2+1})_{xx}^2$ is a linear combination of
$\iint h^{\alpha-2} h_x^4$, and $\iint h^{\alpha} h_{xx}^2$.  By
(\ref{alpha_bound1}), the two integrals are uniformly bounded independent
of $\eps$ and $T$ hence $\iint (h_\eps^{\alpha/2+1})_{xx}^2$ is as
well, yielding the first part of (\ref{E:b13}).

The uniform bound of $\iint (h_\eps^{\alpha/4 + 1/2})_x^4$ follows
immediately from the uniform bound of $\iint h^{\alpha-2} h_x^4$,
yielding the second part of (\ref{E:b13}).

\textbf{Case 2: $-\tfrac{1}{2} < \alpha < 0$.}  For $\alpha < 0$ the
coefficient multiplying $\iint h^{\alpha-2} h_x^4$ in (\ref{E:b1})
is negative.  However, we will show that if $\alpha > -1/2$ then one
can replace this coefficient with a positive coefficient while also
controlling the term $\iint h^\alpha h_x^2$ on the right-hand side
of (\ref{E:b1}).

Applying the Cauchy-Schwartz inequality to the right--hand side of
(\ref{calc_ident}), dividing by $\sqrt{\iint h^{\alpha-2} h_x^4}$,
and squaring both sides of the resulting inequality yields
\begin{equation}\label{E:b4}
\iint\limits_{Q_T} {h^{\alpha -2 }h^4_{x}\,dx dt} \leq \tfrac{9}{(1
- \alpha)^2} \iint\limits_{Q_T} {h^{\alpha} h^2_{xx}\,dx dt} \qquad
\forall \alpha < 1.
\end{equation}
Using (\ref{E:b4}) in (\ref{E:b1}) yields
\begin{align}
& \label{E:b5}
\int\limits_{\Omega} {G^{(\alpha)}_{\varepsilon}(h(x,T))\,dx} + a_0
\tfrac{1+2\alpha}{1-\alpha}\iint\limits_{Q_T} {h^{\alpha}h^2_{xx}\,dx dt}  \\
& \hspace{.2in} \notag
\leq
\int\limits_{\Omega}
{G^{(\alpha)}_{\varepsilon}(h_{0\varepsilon})\,dx} + a_1
\iint\limits_{Q_T} {h^{\alpha} h^2_{x}\,dx dt}
+ \tfrac{|a_2|}{\alpha+1} \| w'' \|_\infty  \iint\limits_{Q_T} {h^{\alpha+1}  \,dx dt}.
\notag
\end{align}
Note that if $\alpha > -1/2$ then all the terms on the left--hand side
of (\ref{E:b5}) are positive.  We now control the term
$\iint h^\alpha h_x^2$ on the right-hand side of
(\ref{E:b5}).

By integration by parts and the periodic boundary conditions
\begin{equation}  \label{calc_ident2}
\iint\limits_{Q_T} h^\alpha h_x^2 \; dx dt = - \tfrac{1}{1+\alpha}
\iint\limits_{Q_T} h^{\alpha+1} h_{xx} \; dx dt.
\end{equation}
Applying the Cauchy inequality to (\ref{calc_ident2}) yields
\begin{equation} \label{E:b6}
a_1 \iint\limits_{Q_T} h^\alpha h_x^2\; dx dt \leq
\iint\limits_{Q_T}\left(\tfrac{a_0 (1+2 \alpha)}{2(1-\alpha)}
h^\alpha h_{xx}^2 + \tfrac{a_1^2 (1-\alpha)}{2 a_0 (1+2 \alpha)
(1+\alpha)^2} h^{\alpha+2}\right) dxdt.
\end{equation}
Using inequality (\ref{E:b6}) in (\ref{E:b5}) yields
\begin{align}
& \label{E:b6a}
\int\limits_{\Omega} {G^{(\alpha)}_{\varepsilon}(h(x,T))\,dx} + a_0
\tfrac{1+2\alpha}{2(1-\alpha)}\iint\limits_{Q_T} {h^{\alpha}h^2_{xx}\,dx dt}  \\
& \notag \leq \int\limits_{\Omega}
{G^{(\alpha)}_{\varepsilon}(h_{0\varepsilon})\,dx} +
\iint\limits_{Q_T} \left(\tfrac{a_1^2 (1-\alpha)}{2 a_0 (1+2 \alpha)
(1+\alpha)^2} h^{\alpha+2}+ \tfrac{|a_2|}{\alpha+1} \| w'' \|_\infty
{h^{\alpha+1} }\right) dxdt. \notag
\end{align}
Adding
$$
\tfrac{a_0 (1+2 \alpha) (1-\alpha)}{36}  \iint\limits_{Q_T}  h^{\alpha-2} h_x^4 \; dx dt
$$
to both sides of (\ref{E:b6a}) and using the inequality (\ref{E:b4}) yields
\begin{align}
& \label{E:b6b}
\int\limits_{\Omega} {G^{(\alpha)}_{\varepsilon}(h(x,T))\,dx} + a_0
\tfrac{(1+2\alpha)}{4(1-\alpha)}\iint\limits_{Q_T} {h^{\alpha}h^2_{xx}\,dx dt}  \\
& \hspace{.2in} \notag + \tfrac{a_0(1+2\alpha)(1-\alpha)}{36}
\iint\limits_{Q_T} h^{\alpha-2} h_x^4 \; dx dt \leq
\int\limits_{\Omega}
{G^{(\alpha)}_{\varepsilon}(h_{0\varepsilon})\,dx} \\
& \hspace{.2in} \notag + \tfrac{a_1^2 (1-\alpha)}{2 a_0 (1+2 \alpha)
(1+\alpha)^2} \iint\limits_{Q_T} h^{\alpha+2}\; dx dt +
\tfrac{|a_2|}{\alpha+1} \| w'' \|_\infty \iint\limits_{Q_T}
{h^{\alpha+1} \; dx dt}. \notag
\end{align}
Using (\ref{E:b6b}) and (\ref{LpH1}) yields
\begin{align}
&\notag \int\limits_{\Omega}
{G^{(\alpha)}_{\varepsilon}(h(x,T))\,dx} + \iint\limits_{Q_T}
\left(\tfrac{a_0(1+2\alpha)}{4(1-\alpha)} {h^{\alpha}h^2_{xx}} +
\tfrac{a_0(1+2\alpha)(1-\alpha)}{36}h^{\alpha -2
}h^4_{x}\right)\,dxdt \\
& \hspace{.2in} \label{E:b9a'} \leq \int\limits_{\Omega}
{G^{(\alpha)}_{\varepsilon}(h_{0\varepsilon})\,dx} + e_1
\int\limits_0^T \max\left\{ 1, \left( \int\limits_\Omega h_x^2 \; dx
\right)^{\tfrac{\alpha}{2}+1} \right\} \; dt
\end{align}
where $ e_1 =
b_4\left(\tfrac{a_1^2(1-\alpha)}{2a_0(1+2\alpha)(1+\alpha)^2}+
\tfrac{|a_2|}{\alpha+1} \|w''\|_\infty \right) + b_5 \bigl(
\tfrac{a_1^2(1-\alpha)}{2a_0(1+2\alpha)(1+\alpha)^2}\;
M_\eps^{\alpha+2}$ $ + \tfrac{|a_2|}{\alpha+1} \|w''\|_\infty \;
M_\eps^{\alpha+1} \bigr)$. Recall the bound (\ref{E:b9b}). As
before, by the Cauchy inequality,
\begin{align} \label{E:b9c'}
& \tfrac{2 a_1^2}{a_0} \iint\limits_{Q_T} h^3 h_x^2 \; dx dt \leq
\tfrac{a_0(1+2\alpha)(1-\alpha)}{36}
\iint\limits_{Q_T} h^{\alpha-2} h_x^4 \; dx dt\\
& \hspace{1.4in}  \notag + \tfrac{36 a_1^4}{a_0^3
(1+2\alpha)(1-\alpha)} \iint\limits_{Q_T} h^{8-\alpha} \; dx dt.
\end{align}
Using (\ref{E:b9c'}) in (\ref{E:b9b}) yields
\begin{align} \notag
& \int\limits_{\Omega} {h_x^2 \,dx} + a_0 \iint\limits_{Q_T}
{f_{\varepsilon}(h) h^2_{xxx}\,dx dt}\leq  \int\limits_{\Omega}
{h_{0\varepsilon, x}^2\,dx}\\
& \notag \hspace{.2in}  + \tfrac{a_0 (1+2\alpha)(1-\alpha)}{36}
\iint\limits_{Q_T} { h^{\alpha-2} h^4_{x} \,dx dt} + e_2
\int\limits_0^T \max\Bigl\{ 1, \Bigl( \int\limits_\Omega h_x^2 \; dx
\Bigr)^{4-\tfrac{\alpha}{2}} \Bigr\} \; dt
\end{align}
where $ e_2 = b_4\left(\tfrac{36 a_1^4}{a_0^3 (1+2\alpha)(1-\alpha)}
+ \tfrac{2 a_2^2}{a_0} \| w' \|^2_\infty \right) + b_5 \bigl(
\tfrac{36 a_1^4}{a_0^3 (1+2\alpha)(1-\alpha)} \; M_\eps^{8-\alpha}
+$ $ \tfrac{2 a_2^2}{a_0} \| w' \|^2_\infty \; M_\eps^{3} \bigr)$.
Just as (\ref{E:b9a}) and (\ref{E:b9b}) yielded (\ref{E:b9e}),
(\ref{E:b9a'}) combined with the above inequality yields
\begin{align} \label{E:b9e'}
& \int\limits_{\Omega} {h_x^2(x,T) \,dx}
+ \int\limits_{\Omega} {G^{(\alpha)}_{\varepsilon}(h(x,T))\,dx}
+ a_0
\iint\limits_{Q_T} {f_{\varepsilon}(h) h^2_{xxx}\,dx dt} \\
& \hspace{.2in} \leq \notag
 \int\limits_{\Omega} {h_{0\varepsilon, x}^2\,dx}
+ \int\limits_\Omega G^{(\alpha)}_{\varepsilon}(h_{0\varepsilon})
\,dx + e_3 \int\limits_0^T \max\Bigl\{ 1, \Bigl(
\int\limits_\Omega h_x^2 \; dx \Bigr)^{4-\tfrac{\alpha}{2}}
\Bigr\}
\end{align}
where $e_3 = e_1 + e_2$. The rest of the proof now continues as in
the $0 < \alpha < 1$ case. Specifically, one finds a bound
\begin{align} \label{bound2}
& \int\limits_\Omega (h_x^2(x,T) + G_{\eps}^{(\alpha)}(h(x,T))) \; dx \\
& \hspace{.4in} \leq 4^{\tfrac{1}{6-\alpha}} \max\left\{ 1,
\int\limits_\Omega ({h_{0,\eps,x}}^2(x) +
G_{\eps}^{(\alpha)}(h_{0,\eps}(x))) \; dx \right\} = K_\eps < \infty
\notag
\end{align}
for all $T$:
$$
0 \leq T \leq T_{\eps,loc}^{(\alpha)} := \tfrac{1}{e_3(6-\alpha)}
\min\Bigl\{ 1,   \Bigl(  \int\limits_\Omega ({h_{0,\eps,x}}^2(x) +
G_{\eps}^{(\alpha)}(h_{0,\eps}(x))) \;
dx\Bigr)^{-\frac{6-\alpha}{2}} \Bigr\}.
$$
The time $T_{loc}^{(\alpha)}$ is defined as in (\ref{Tloc_alpha_is})
and the uniform bound (\ref{bound2}) used to bound the right hand side
of (\ref{E:b9a'}) yields the desired bound
\begin{align} \notag
& \int\limits_{\Omega} {G^{(\alpha)}_{\varepsilon}(h(x,T))\,dx}
+ \tfrac{a_0(1+2\alpha)}{4(1-\alpha)}
\iint\limits_{Q_T} {h^{\alpha}h^2_{xx}\,dx dt}  \\
& \hspace{1.2in} + \tfrac{a_0(1+2\alpha)(1-\alpha)}{36}
\iint\limits_{Q_T} {h^{\alpha -2 }h^4_{x}\,dx dt}  \leq K_2.
\label{alpha_bound2}
\end{align}

\end{proof}

\begin{proof}[Proof of Lemma~\ref{F:local_BF}]
In the following, we denote the
 positive, classical solution $h_\eps$
constructed in Lemma \ref{preC:Th1} by $h$ (whenever there is no
chance of confusion).

Recall the entropy function $G_{\delta \eps}(z)$ defined by (\ref{D:reg2}).
Multiplying
(\ref{D:1r'}) by $\xi(x) G'_{\delta \varepsilon}(h_{\delta \eps})$,
taking $\delta \to 0$, and integrating over $Q_T$ yields
\begin{align}
& \notag \int\limits_{\Omega} {\xi(x) G_{\varepsilon}(h(x,T))dx}  -
\int\limits_{\Omega} {\xi(x) G_{\varepsilon}(h_{0,\varepsilon})dx} =
- a_3\iint\limits_{Q_T} {\xi(x) G'_{\varepsilon}(h)
h_{x}dx dt} \\
& \notag \hspace{.3in}
+ \iint\limits_{Q_T} {f_{\varepsilon}(h)(a_0 h_{xxx} +
a_1 h_x + a_2 w') (\xi' G'_{\varepsilon}(h) + \xi G''_{\varepsilon}(h) h_x) \,dx dt}  \\
&\notag =
a_3\iint\limits_{Q_T} {\xi'G_{\varepsilon}(h) \,dx dt} +
\iint\limits_{Q_T} {\xi' f_{\varepsilon}(h)G'_{\varepsilon}(h)
(a_0 h_{xxx} +
a_1 h_x + a_2 w')\,dx dt}  \\
&  \hspace{.3in} +
\iint\limits_{Q_T} {\xi  h_x (a_0 h_{xxx} + a_1 h_x + a_2 w')
\,dx dt} =: I_{1} + I_{2} + I_{3} .
\label{F:pp1}
\end{align}
One easily finds that for all $\eps > 0$ and all $z \geq 0$
$$
|f_{\varepsilon}(z)G_{\varepsilon}'(z)| \leqslant \tfrac{1}{2}z, \
|f_{\varepsilon}'(z)G_{\varepsilon}'(z)| \leqslant 2,
$$
$$
\Bigl |\int \limits_0^z
{f_{\varepsilon}(s)G'_{\varepsilon}(s)\,ds}\Bigr| \leq
\tfrac{1}{2} z^2 + \tfrac{3}{5} \text{ if } 0 < \eps <
(\sqrt{33}-3)/4.
$$
Using these bounds, and recalling $\xi = \zeta^4$, we bound
$|I_2|$:
\begin{align}
& \notag | I_2 | \leq \iint\limits_{Q_T} \left(\tfrac{a_0}{2}\zeta^4
h_{xx}^2 + \gamma_1 \left[ \zeta^2 \zeta_x^2 + \zeta^3 | \zeta_{xx}|
+ \zeta_x^4  + \zeta^2 \zeta_{xx}^2 \right] \left( h^2 + h_x^2 \right)\right)\,dxdt \\
& \label{I2_bound} \hspace{.4in} + 2 |a_2| \| w' \|_\infty
\iint\limits_{Q_T} \zeta^3 | \zeta_x | h \; dx dt+ \tfrac{3}{5}
|a_1| \iint\limits_{Q_T} |  \xi^{\prime \prime} |\; dx dt
\end{align}
where $\gamma_1 = \max\{ 102 a_0, 6 |a_1| \}$ and $0 < \eps  <
(\sqrt{33}-3)/4$. Now, integrating by parts in $I_3$, we deduce
\begin{align}
& \notag I_3 + a_0 \iint\limits_{Q_T} \xi h_{xx}^2 \; dx dt
\leq \gamma_2 \iint\limits_{Q_T} \left[ \zeta^2 \zeta_x^2 + \zeta^3 | \zeta_{xx} | + \zeta^4 \right] h_x^2 \; dx dt\\
 & \hspace{.7in} \label{I3_bound}
 + 4 |a_2| \left( \| w' \|_\infty + \| w^{\prime \prime} \|_\infty \right) \iint\limits_{Q_T} \left( \zeta^3 | \zeta_x | + \zeta^4 \right) h\; dx dt
\end{align}
where $\gamma_2 = \max\{ 6 a_0, |a_1| \}$. Using bounds
(\ref{I2_bound}) and (\ref{I3_bound}) we obtain that
\begin{equation} \label{local_BF}
\int\limits_{\Omega} {\xi G_{\varepsilon}(h_\eps(x,T))\,dx}
\leqslant \int\limits_{\Omega} {\xi G_{\varepsilon}(h_{0 \eps})\,dx}
+ C
\end{equation}
where $C > 0$ is independent of $\varepsilon > 0$. Using the fact
that $\theta$ was chosen so that $\theta < 2/5 < 1/2$, we have
$|\xi(x) \, G_\eps(h_{0 \eps}(x))| \leq \xi(x) (G_0(h_0(x)) + c)
\leq C (G_0(h_0(x)) + c )$ almost everywhere in $x$ and for all
$\eps < \eps_0$. To finish the proof we apply Fatou's lemma to the
left-hand side and Lebesgue Dominated Convergence Theorem to the
right-hand side  of (\ref{local_BF}).
\end{proof}

\section{Results used from functional analysis}

\begin{lemma}\label{A.1}
(\cite{Lions}) Suppose that $X,\ Y,$  and  $Z$ are Banach spaces,
$X \!\Subset \!Y \subset \! Z$, and $X$ and $Z$ are reflexive.
Then the embedding $ \{ u \in \! L^{p_0 } (0,T;$ $X):$ $\partial
_t u \in L^{p_1 } (0,T;Z),1 < p_i < \infty ,i = 0,1 \} \Subset
L^{p_0 } (0,T;Y)$ is compact.
\end{lemma}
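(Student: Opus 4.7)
The plan is to combine an Ehrling-type interpolation inequality with the Fréchet--Kolmogorov characterization of compactness in $L^{p_0}(0,T;Z)$. The key idea is that the middle space $Y$ can be interpolated between the ``strong'' space $X$ (where compact embedding holds) and the ``weak'' space $Z$ (where time translations are controlled via $\partial_t u$).

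First I would establish the Ehrling inequality: since $X \Subset Y$ and $Y \subset Z$, for each $\eta>0$ there exists $C_\eta$ with
\begin{equation*}
\|v\|_Y \leq \eta \|v\|_X + C_\eta \|v\|_Z \qquad \text{for all } v \in X.
\end{equation*}
A standard contradiction argument works: if the inequality failed for some $\eta_0>0$, one could extract $v_n$ with $\|v_n\|_Y = 1$, $\|v_n\|_X$ bounded, and $\|v_n\|_Z \to 0$; by $X \Subset Y$, a subsequence would converge in $Y$ to a limit forced to be $0$ by the $Z$-convergence, contradicting $\|v_n\|_Y = 1$.

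Next, given a bounded sequence $\{u_n\}$ in the space on the left, applying the Ehrling inequality pointwise in $t$ and raising to the $p_0$ power yields
\begin{equation*}
\|u_n - u_m\|_{L^{p_0}(0,T;Y)} \leq \eta \|u_n - u_m\|_{L^{p_0}(0,T;X)} + C_\eta \|u_n - u_m\|_{L^{p_0}(0,T;Z)}.
\end{equation*}
The first term is uniformly small by the $L^{p_0}(0,T;X)$ bound, so it suffices to prove precompactness in $L^{p_0}(0,T;Z)$. For this I would invoke Fréchet--Kolmogorov in the time variable: uniform boundedness in $L^{p_0}(0,T;Z)$ is immediate from $X \subset Z$, while the time-translation bound
\begin{equation*}
\|u_n(\cdot+h) - u_n(\cdot)\|_{L^{p_0}(0,T-h;Z)} \leq h^{1 - 1/p_1} \|\partial_t u_n\|_{L^{p_1}(0,T;Z)}
\end{equation*}
(obtained from $u_n(t+h) - u_n(t) = \int_t^{t+h}\partial_s u_n\,ds$ and Hölder) tends to zero uniformly in $n$ as $h \downarrow 0$. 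A diagonal extraction in $\eta\to 0$ then produces a subsequence Cauchy in $L^{p_0}(0,T;Y)$.

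The main obstacle is that Fréchet--Kolmogorov for \emph{Banach-valued} $L^{p_0}$-functions requires not only time-translation smallness but also some form of spatial equi-integrability in $Z$. One handles this by approximating $u_n$ by its Steklov average in time, $u_n^h(t) := h^{-1}\int_t^{t+h} u_n(s)\,ds$, and noting that $\{u_n^h(t)\}_{n,t}$ lies in a bounded set of $X$, which is reflexive (so bounded sets are weakly precompact) and embeds compactly into $Y$ hence into $Z$. The reflexivity assumption on $X$ and $Z$ enters precisely here, ensuring that weak limits of subsequences exist and that the mollification argument closes. Everything else is bookkeeping with Hölder in time.
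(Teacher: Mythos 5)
The paper does not prove Lemma~\ref{A.1}; it is stated as a cited result from Lions' book (the Aubin--Lions compactness lemma), so there is no in-text proof to compare against. Your sketch nonetheless reproduces the standard textbook argument that the cited reference uses: Ehrling's lemma to trade the intermediate norm for the endpoint norms, the time-translation estimate derived from $\partial_t u$ via the fundamental theorem of calculus and H\"older, and a Steklov-averaging step to supply the pointwise-in-$t$ compactness that the Banach-valued Riesz--Fr\'echet--Kolmogorov criterion requires. Structurally this is sound, and the reduction chain ($L^{p_0}(Y)$ compactness $\Rightarrow$ $L^{p_0}(Z)$ compactness $\Rightarrow$ translation smallness plus Steklov approximation) is correct.

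Two points deserve attention. First, a minor computational slip: the displayed translation estimate should carry an extra (harmless) factor $\sim T^{1/p_0}$, since the pointwise bound $\|u_n(t+h)-u_n(t)\|_Z \le h^{1-1/p_1}\|\partial_t u_n\|_{L^{p_1}(0,T;Z)}$ must still be integrated in $t$ over $(0,T-h)$; this does not affect the conclusion because the factor is bounded and $1-1/p_1>0$. Second, and more substantively, your account of where reflexivity enters is off. In the Steklov step you only need that $\{u_n^h(t)\}_{n,t}$, being bounded in $X$, is \emph{strongly} relatively compact in $Y$ (and hence in $Z$); this is exactly what $X\Subset Y$ provides, and it is strictly stronger than the weak precompactness reflexivity would give. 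Reflexivity of $X$ and $Z$ in Lions' formulation is used differently --- essentially to make the ambient space $\{u\in L^{p_0}(X):\,\partial_t u\in L^{p_1}(Z)\}$ reflexive so one can extract weakly convergent subsequences at the outset and reduce to showing that weak-to-strong convergence holds --- and Simon later showed that it can be dispensed with entirely. So your claim that reflexivity enters ``precisely here'' to make ``the mollification argument close'' is a mild misdiagnosis; the step you point to is carried by the compact embedding, not by reflexivity. This does not create a gap in the proof, but it would be worth fixing the attribution before presenting the argument.
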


\begin{lemma}\label{A.2}
(\cite{Sim}) Suppose that $X,\ Y,$  and  $Z$ are Banach spaces and
$X \!\Subset \!Y\hspace{-0.2cm} \subset \! Z$. Then the embedding
$ \{u \in L^\infty (0,T;X):\partial _t u \in L^p (0,T;Z)$, $p > 1
\} \Subset C(0,T;Y)$ is compact.
\end{lemma}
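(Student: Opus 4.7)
\textbf{Proof proposal for Lemma~\ref{A.2}.}
The plan is to apply an Arzel\`a--Ascoli argument in the target space $C([0,T];Y)$. Let $\{u_n\}$ be a bounded sequence in the domain, so that $\|u_n\|_{L^\infty(0,T;X)} \leq M$ and $\|\partial_t u_n\|_{L^p(0,T;Z)} \leq M$, and after modifying each $u_n$ on a null set I may assume $u_n \in C([0,T];Z)$ with $\|u_n(t)\|_X \leq M$ for every $t \in [0,T]$ (this uses $p > 1$ to identify $u_n$ with its absolutely continuous representative $[0,T] \to Z$). I need to establish (a) pointwise precompactness of $\{u_n(t)\}$ in $Y$ for each $t$, and (b) equicontinuity of $\{u_n\}$ as maps $[0,T] \to Y$.

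Step (a) is immediate from $X \Subset Y$: the uniform bound $\|u_n(t)\|_X \leq M$ gives a $Y$--precompact set. The heart of the argument is (b), and for this I invoke the Ehrling-type interpolation inequality: because $X \Subset Y \subset Z$, for every $\eta > 0$ there exists $C_\eta < \infty$ such that
\begin{equation} \notag
\|v\|_Y \leq \eta \|v\|_X + C_\eta \|v\|_Z \quad \text{for all } v \in X.
\end{equation}
Applying this to $v = u_n(t) - u_n(s)$ yields
\begin{equation} \notag
\|u_n(t)-u_n(s)\|_Y \leq 2 M \eta + C_\eta \|u_n(t)-u_n(s)\|_Z,
\end{equation}
and by H\"older's inequality and the fundamental theorem of calculus in $Z$,
\begin{equation} \notag
\|u_n(t)-u_n(s)\|_Z \leq \int_s^t \|\partial_\tau u_n\|_Z\,d\tau \leq |t-s|^{1-1/p}\, M.
\end{equation}
Since $p > 1$ gives $1 - 1/p > 0$, choosing $\eta$ small and then $|t-s|$ small makes the right-hand side uniformly small in $n$. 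This is the equicontinuity in $Y$.

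With (a) and (b) in hand, a standard Arzel\`a--Ascoli argument (extract a subsequence converging in $Y$ on a countable dense subset of $[0,T]$ via a diagonal procedure, then upgrade to uniform convergence on $[0,T]$ using equicontinuity) produces a subsequence converging in $C([0,T];Y)$. The main obstacle I anticipate is justifying the Ehrling inequality and ensuring the pointwise representative of $u_n$ actually satisfies $\|u_n(t)\|_X \leq M$ for \emph{every} $t$ (not merely a.e.), since without this the Ehrling step fails at exceptional times; both issues are handled by the compact embedding $X \Subset Y$ together with the absolute continuity of $u_n$ into $Z$ implied by $\partial_t u_n \in L^p(0,T;Z)$.
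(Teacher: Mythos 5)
The paper does not give a proof of Lemma~\ref{A.2}; it is cited verbatim from Simon \cite{Sim}, so there is no in-paper argument to compare against. Your proposal is the standard (and essentially Simon's own) proof of this compactness theorem: pointwise $Y$-precompactness from $X \Subset Y$, equicontinuity from Ehrling's interpolation inequality combined with the $Z$-valued fundamental theorem of calculus and H\"older, then Arzel\`a--Ascoli. The structure and all the main estimates are correct.

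One point that you flag but do not fully resolve deserves a cleaner treatment. You claim the continuous-in-$Z$ representative of $u_n$ satisfies $\|u_n(t)\|_X \leq M$ for \emph{every} $t$. Lemma~\ref{A.2}, unlike Lemma~\ref{A.1}, does not assume $X$ reflexive, and without reflexivity this need not hold: for an exceptional time $t_0$, approximating $u_n(t_0)$ in $Z$ by values $u_n(t_k)$ at non-exceptional times and using compactness of the ball $B_M = \{v \in X : \|v\|_X \leq M\}$ in $Y$ only places $u_n(t_0)$ in the $Y$-closure of $B_M$, which need not lie in $X$. That is, however, enough for your argument: the $Y$-closure of a precompact set is compact, giving pointwise precompactness in $Y$, and the Ehrling-derived estimate $\|v_1 - v_2\|_Y \leq 2M\eta + C_\eta \|v_1 - v_2\|_Z$ extends by density from pairs in $B_M$ to pairs in its $Y$-closure, giving equicontinuity. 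With that adjustment your proof is complete.
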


\bibliographystyle{plain}
\bibliography{ThinFilm}

\end{document}